\documentclass{amsart}

\usepackage{stmaryrd}

\usepackage{amsmath, amsthm, amssymb}

\theoremstyle{definition}

\usepackage{scalerel}
\usepackage{mathrsfs}

\usepackage{tikz}
\usepackage{hyperref}
\usepackage{placeins}

\usepackage{proof}

\usepackage{verbatim}
\usepackage{mathtools}
\usepackage{cleveref}

\usepackage{bbm}

\usepackage[bottom]{footmisc}

\usepackage{enumitem}

\usepackage{biblatex} 
\newtheorem{theorem}{Theorem}
\newtheorem*{theorem*}{Theorem}
\newtheorem{maintheorem}{Main Theorem}
\newtheorem*{maintheorem*}{Main Theorem}

\newtheorem*{maintheorems*}{Main Theorems}

\newtheorem{definition}[theorem]{Definition}
\newtheorem{corollary}{Corollary}[theorem]
\newtheorem{prop}[theorem]{Proposition}
\newtheorem{lemma}[theorem]{Lemma}

\newtheorem{remark}[theorem]{Remark}
\newtheorem{claim}{Claim}[theorem]

\newenvironment{claimproof}[1]{\par\noindent\textit{Proof of the Claim.}\space#1}{\hfill $\blacksquare$}

\newcommand\A{\mathbb{A}}
\newcommand\I{\mathscr{I}}
\newcommand\G{\mathcal{G}}
\newcommand\F{\mathcal{F}}

\newcommand\sym{\textup{sym}}
\newcommand\fix{\textup{fix}}
\newcommand\tc{\textup{TC}}
\newcommand\ord{\textup{ORD}}
\newcommand\Aut{\textup{Aut}}

\renewcommand{\ker}{\textup{ker}}

\newcommand\Mtail{M_\textup{Tail}}

\newcommand\Mcol{M_{\textup{Coll}}}

\newcommand\Mrp{M_{\textup{RP-}}}

\newcommand\ZFU{\textup{ZFU}}
\newcommand\ZFCU{\textup{ZFCU}}
\newcommand\ZFUR{\textup{ZFU}_\textup{R}}

\newcommand\AcardSet{\mathbb{A}_\text{Card}\text{Set}}

\newcommand{\barz}{\bar{z}}

\def\<#1>{\left\langle#1\right\rangle}
\def\[#1]{\left\llbracket#1\right\rrbracket}
\renewcommand{\restriction}{\mathord{\upharpoonright}}

\title{\textbf{Reflection Principles in ZFU}}
\author{Elliot Glazer}
\author{Bokai Yao}

\usepackage{graphicx}

\begin{document}
\maketitle

\begin{abstract}
We separate the Collection Principle, the Reflection Principle, and the Partial Reflection Principle in ZF with urelements (ZFU), despite their equivalence under the Axiom of Choice. In particular, Collection and the Partial Reflection Principle are independent of one another, and Collection together with Partial Reflection does not imply the Reflection Principle. We show that Reflection and Collection are equivalent assuming either the Tail axiom or Small Violations of Choice.\end{abstract}

\section{Introduction}
Reflection principles in set theory are intended to capture the indescribability of the universe of sets: intuitively, no single property can fully characterize the universe as a whole. A standard formalization of this idea is given by the Reflection Principle RP, which asserts that any formula is absolute between the universe and arbitrarily large initial segments of it, that is, transitive sets.
\begin{itemize}
\item [] (RP) $\forall x \exists t (x \subseteq t \land t \text{ is a transitive set} \land \forall v \in t (\varphi(v) \leftrightarrow \varphi^t(v)))$.
\end{itemize}
A seemingly weaker formulation is the Partial Reflection Principle RP$^-$, first considered by Lévy \cite{levy1966principles}, which states that whenever a formula (with parameters) holds, it already holds in some transitive set.
\begin{itemize}
\item [] (RP$^-$) $\forall x ((\varphi (x) \rightarrow \exists t (x \in t \ \land \ t \text{ is a transitive set} \land \ \varphi^t(x))).$
\end{itemize}
\noindent A classical result of L\'evy \cite{levy1960axiom} and Montague \cite{Montague1961-MONFAT} shows that RP, and hence RP$^-$, is provable in ZF.

The axioms of ZF, however, rule out \textit{urelements}, that is, elements of sets but are not themselves sets. Once urelements are admitted, the behavior of reflection principles becomes a natural question to explore. This motivates a systematic study of reflection principles in ZF with urelements (ZFU), where a proper class of urelements is allowed. It is folklore that RP$^-$ is no longer provable even in ZFCU (ZFU with the Axiom of Choice). Nevertheless, when choice is available, the failure of reflection can be compensated for by the Collection Principle.
\begin{itemize}
\item [] (Collection) $\forall w, p (\forall x \in w \exists y \varphi(x, y, p)   \rightarrow \exists v \forall x \in w \exists y \in v \varphi(x, y, p))$.
\end{itemize}
Collection is provable in ZF and is often regarded as a natural strengthening of Replacement. Moreover, as an easy consequence of RP (Proposition \ref{prop:RP->powerRP-andCollection}), it may itself be viewed as a form of reflection. The following is proved by the second author in \cite{YaoAxiomandForcing}.
\begin{theorem*}
The following are equivalent over ZFCU.
\begin{enumerate}
\item RP
\item RP$^-$
\item Collection \qed
\end{enumerate}
\end{theorem*}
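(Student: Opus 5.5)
We prove the cycle (1)$\Rightarrow$(2)$\Rightarrow$(3)$\Rightarrow$(1). The first implication is immediate: given $\varphi(x)$, RP provides a transitive $t$ with $x\in t$ (apply RP to $\{x\}$ and take transitive closure as needed) that is absolute for $\varphi$, so $\varphi^t(x)$ holds.

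For (2)$\Rightarrow$(3), fix $w,p$ with $\forall x\in w\,\exists y\,\varphi(x,y,p)$. Apply RP$^-$ to the formula $\psi(w,p) :\equiv \forall x\in w\,\exists y\,\varphi(x,y,p)$ together with the statement that $\varphi$ is absolute in the relevant sense. The naive use gives a transitive $t\ni \langle w,p\rangle$ with $\psi^t$, which yields witnesses $y\in t$ for $\varphi^t$ but not for $\varphi$. To handle this, reflect instead a conjunction that also encodes truth. Since RP$^-$ is a scheme, for each formula $\varphi$ we apply it to the sentence ``$\forall x\in w\,\exists y\,\varphi(x,y,p)$ and every subformula instance of $\varphi$ agrees with its relativization'' is not expressible directly. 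So we argue differently. Using AC, choose an ordinal-indexed well-ordering and define $F(x)=$ the least rank $\alpha$, over the kernel, such that some witness $y$ exists whose transitive closure has a specified minimal ``urelement-free'' rank. The key ZFCU fact is that every set is, up to a permutation of urelements, coded by a pure set together with a set of urelements. Thus the existence of a witness for $x$ depends only on the pure-set type of $x$ over $p$'s support. By AC, $w$ is well-orderable, so its elements can be indexed by an ordinal. Replacement in the pure part (which satisfies ZF, hence full reflection and collection) then bounds the pure codes of the needed witnesses. Finally, each witness needs only urelements from $\ker(p)$, $\ker(w)$, and a set of fresh urelements of size bounded by a cardinal $\kappa$. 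Via duplication, a single set of $\kappa$ many new urelements suffices, so the witnesses form a set $v$. This route proves (3) outright from AC. That is suspicious, since the paper says RP$^-$ fails in ZFCU. The catch is that a proper class of urelements may lack a set of $\kappa$ fresh urelements of the right kind. RP$^-$ must supply exactly this: we apply RP$^-$ to ``there exist $\kappa$ urelements outside $\ker(w,p)$'', or more precisely to the statement that the witnesses exist, to extract a transitive set containing enough urelements. This step is the main obstacle, and I would first try reflecting $\exists A\,(|A|=\kappa \wedge A\text{ urelements disjoint from the kernel})$.

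For (3)$\Rightarrow$(1), run the usual Lévy–Montague argument. Build a continuous chain $t_0\subseteq t_1\subseteq\cdots$ of transitive sets of length $\omega$, closing under witnesses for the finitely many subformulas of $\varphi$. At each stage, Collection applied to $t_n\times$(subformulas) gives a set $v$ of witnesses, and $t_{n+1}$ is the transitive closure of $t_n\cup v$. To iterate $\omega$ times we need the function $n\mapsto t_n$ to exist. Collection alone does not give this, since it yields no choice of a unique $t_{n+1}$. Here AC helps: use DC, which follows from AC, to pick a sequence, or apply Collection once more to get a set containing candidate successors and choose among them via a well-ordering. Then $t=\bigcup_n t_n$ satisfies the Tarski–Vaught criterion for $\varphi$, giving RP.
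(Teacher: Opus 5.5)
Your (1)$\Rightarrow$(2) is fine, but the other two implications are not actually proved. In (2)$\Rightarrow$(3) you correctly notice that ``a single set of $\kappa$ many new urelements suffices'' presupposes that the supremum of the needed sizes is realized outside the base. That is exactly what fails when Collection fails (e.g.\ urelement sets of every size $\aleph_n$ but none of size $\aleph_\omega$). Your proposed repair does not work, though. Applying RP$^-$ to a single existential $\exists C\,(C\subseteq\A-A\wedge |C|=\kappa)$ gives nothing: if it is true you already have $C$, and if it were true for $\kappa$ the supremum you would not need RP$^-$ at all.

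The missing idea (Lemma~\ref{lemma:Uni+Dup+RP-->Col}, used in Corollary~\ref{cor:equivalenceunderACA}) is to reflect a \emph{universal} statement over a set of sizes:
\begin{enumerate}
\item Enlarge $A\supseteq\ker(w)\cup\ker(p)$ by a set over which duplication holds (Lemma~\ref{lemma:dup(A)basics}). Then any equinumerous $C,D\subseteq\A-A$ are exchanged by an automorphism fixing $A$ pointwise, by swapping through a disjoint duplicate of $C\cup D$. Your remark that witnesses depend only on a ``pure type'' silently needs this step.
\item Let $K=\{\kappa_x: x\in w\}$, where $\kappa_x$ is the least $|\ker(y)-A|$ over witnesses $y$ for $x$. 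Apply RP$^-$ to $\forall\kappa\in K\,\exists C\subseteq\A-A\,(C\sim\kappa)$.
\item ``$C\subseteq\A-A$ and $C\sim\kappa$'' holds in $U$ whenever it holds in a transitive $t$. So the reflecting $t$ contains genuine such $C$ for all $\kappa\in K$ at once, and $A^+=A\cup\bigcup\{C\in t: C\subseteq\A-A\}$ is a set.
\item Moving each witness by an automorphism fixing $A$ puts it in $V(A^+)$, and Proposition~\ref{prop:CollectionEquivs} yields Collection.
\end{enumerate}

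In (3)$\Rightarrow$(1), your chain construction correctly reduces RP to Collection plus dependent choices for a \emph{class} relation. But AC only gives DC for relations on a set. The relation ``$s'$ is a transitive extension of $s$ containing witnesses for all parameters from $s$'' lives on a proper class. A well-ordering of a Collection set lets you take one step, but the candidate set at stage $n+1$ depends on the non-canonical choice made at stage $n$, so nothing defines $n\mapsto t_n$.

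This class form of DC is essentially the whole difficulty. RP plus AC already implies it: reflect to a set closed under the relation, then apply DC there. And $\Mcol$ shows that Collection alone does not give RP, so AC must enter substantively; your only use of it is this unjustified step.

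The paper instead argues as follows. AC gives SVC$^\A(\{\emptyset\})$, so $\emptyset$ is universal and cardinality is definable (Proposition~\ref{prop:SVC->Uni}, Lemma~\ref{lemma:uni->HP}). Lemma~\ref{lemma:SVC+Col->Dup+} extracts Dup$^+$ from Collection, and Lemma~\ref{lemma:Uni+Dup+Col->RP} then gives RP. Roughly, strong duplication and definable cardinality let witnesses be moved, by automorphisms fixing the base, into sets of urelements determined by cardinal data. The closure can then be carried out canonically by Replacement rather than by choosing among proper classes.
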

\noindent This naturally raises the question of whether the equivalence persists once the Axiom of Choice is removed.

In Section \ref{section:ZFUresults}, we identify two axioms that unify Collection and RP. One is the Tail axiom introduced in \cite{YaoAxiomandForcing}, while the other is Small Violations of Choice (SVC), an axiom introduced by Blass \cite{blassInjec1979}, which asserts that the Axiom of Choice holds in a forcing extension.
\begin{maintheorem}
Collection is equivalent to RP over ZFU + (Tail $\lor$ SVC).
\end{maintheorem}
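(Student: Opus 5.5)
The direction RP $\Rightarrow$ Collection is Proposition \ref{prop:RP->powerRP-andCollection} and uses neither Tail nor SVC. So the plan is to assume ZFU + Collection together with one of the two extra axioms and derive RP. In both cases I would follow the Lévy–Montague template. Fix $\varphi$ and list its subformulas $\psi_1,\dots,\psi_k$. Starting from $t_0 \supseteq x$ transitive, build an increasing sequence $t_0 \subseteq t_1 \subseteq \cdots$ of transitive sets such that $t_{n+1}$ contains witnesses for every existential subformula $\exists y\,\psi_i(v,y)$ with $v \in t_n$ that holds in $V$. Then take $t = \bigcup_n t_n$; the Tarski–Vaught argument gives $\varphi(v) \leftrightarrow \varphi^t(v)$ for $v \in t$.

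A single step is supplied by Collection. Apply it to $w = t_n^{<\omega}$ and the formula ``$y$ witnesses every true existential subformula at the parameters $\bar v$''. This yields a set $v$, and we put $t_{n+1} = \tc(t_n \cup v)$. The real issue is the $\omega$-iteration: without choice, Collection produces $t_{n+1}$ from $t_n$ but not a function $n \mapsto t_n$. In ZF this is bypassed by canonical choices (least $\alpha$ with $t_{n+1} = V_\alpha$). In ZFU the natural canonical candidates are the sets $V_\alpha(U)$ built over a set $U$ of urelements, but $U$ itself cannot be chosen canonically. The point of each extra axiom is to make the step canonical, or at least to make the $\omega$-sequence exist.

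\emph{Case Tail.} I would use Tail to get a kernel set $A$ of urelements such that the urelements outside $A$ are homogeneous: any bijection between sets of urelements disjoint from $A$ extends to an automorphism of $V$ fixing $A$ pointwise. If $\exists y\,\psi(\bar v,y)$ holds with $\bar v \in V_\alpha(A \cup U)$, then by homogeneity it has a witness in $V_\beta(A \cup U \cup B)$ for \emph{every} set $B$ of fresh urelements of a given cardinality $\kappa$. Here $\beta$ and $\kappa$ can be taken least, which is a canonical choice, with Collection guaranteeing that they exist. The step can then be made canonical up to the choice of $B$, and I would fix that ambiguity all at once. Collect one $\omega$-sequence of fresh sets of urelements in advance, or use Collection on $\omega$ with the formula ``$y$ is a chain of length $n$ of such steps starting from $x$''. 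Since homogeneity lets any two such chains be matched, the $n$-th stages of different chains are isomorphic, and a coherent sequence can be glued together.

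\emph{Case SVC.} I would instead force. Take $\mathbb P$ with $\Vdash$ AC. First show that Collection is preserved to $V[G]$, by the usual argument of collecting names for witnesses below dense sets, which uses Collection in $V$. Then apply the theorem of \cite{YaoAxiomandForcing} in $V[G]$ to get RP there. Finally pull back, either by reflecting $\exists p \in G\,(p \Vdash \varphi(\check v))$ with $V$ definable in $V[G]$ by ground-model definability, or more simply by running the reflection on the forcing relation inside $V$. The aim is a transitive $t \in V$ with $t \supseteq x$ such that $\varphi^V(v) \leftrightarrow \varphi^{t \cap V}(v)$. I expect the main obstacle to be the Tail case: extracting a genuinely coherent $\omega$-chain from homogeneity. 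In the SVC case the main difficulty is verifying that Collection survives forcing over ZFU and that the relevant ground is definable with urelements present. For the Tail case I would try the ``collect all finite chains and glue via automorphisms'' approach first.
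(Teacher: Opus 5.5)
Your RP $\to$ Collection direction is fine. The converse, however, has a genuine gap in both cases, located exactly where you anticipate trouble.

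In the Tail case, you want homogeneity over $A$: any bijection between sets of urelements disjoint from $A$ is induced by an automorphism fixing $A$. That is what Dup$(A)$ supplies (compare the Claim in the proof of Lemma~\ref{lemma:Uni+Dup+RP-->Col}). It cannot be extracted from Tail alone, since $\Mtail$ satisfies Tail while Dup fails there. So Collection must be used in a specific, global way, and your sketch does not provide it.

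The paper's key step is Lemma~\ref{Lemma:Tail+Col->Dup+}. It takes a tail $E$ of $\emptyset$ and applies Collection to the set of $B \subseteq E$ with $B \not\preceq \A - C$ for some $C$. This yields one set $A$ that blocks every blockable $B$. Then every tail $T$ of $A$ has a copy $T'$ outside $A \cup T$, whence $T' \sim T' \times \omega$. This gives Dup$^+(A)$. Moreover, a single bijection $h\colon T' \times \omega \to T'$ produces, with no further choices, an $\omega$-sequence of pairwise disjoint tails of $A$. That is exactly the fresh supply your $\omega$-iteration lacks.

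Your two substitutes do not work as stated. ``Least $\kappa$'' is meaningless, because cardinalities of sets of urelements need not be well-ordered without choice. Gluing chains of length $n$ requires choosing a matching automorphism at each $n$, which is a dependent-choice principle you do not have.

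In the SVC case, the forcing detour only relocates the obstacle. Grant that Collection survives and that RP holds in $V[G]$ (for formulas with a predicate for the ground model). The reflecting set $t'$ still lives in $V[G]$. Its trace $t' \cap V$ is a subset of $V$ with no reason to be an element of $V$. The $\omega$-sequence of stages that $V[G]$ can choose need not exist in $V$, which is the original problem, and your proposal gives no way to pull it back.

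The paper avoids forcing entirely. SVC$^\A(S)$ gives Uni (Proposition~\ref{prop:SVC->Uni}), hence parametrically definable cardinality. Lemma~\ref{lemma:SVC+Col->Dup+} then derives Dup$^+$ from SVC$^\A$ and Collection, by applying Collection to subsets of $S \times \kappa$, where $\kappa$ bounds the $\lambda_P$ for limited $P \subseteq S$. In the Tail case, definable cardinality comes via Max and Uni. In both cases RP follows from Lemma~\ref{lemma:Uni+Dup+Col->RP}: definable cardinality, Dup$^+$ and Collection together imply RP.

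The idea missing from your proposal is this reduction to strong duplication over a single set obtained by one application of Collection.
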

\noindent We also show that RP, RP$^-$ and Collection are equivalent assuming every set of urelements is well-orderable (Corollary \ref{cor:equivalenceunderACA}).

In Section \ref{section:independence}, we demonstrate that the use of the Axiom of Choice is essential for the aforementioned equivalence, thereby answering the main question posed in \cite{YaoAxiomandForcing}.
\begin{maintheorem}
The following implication diagram is complete over $\ZFU$.
\begin{figure}[hbt!]
\begin{center}
\begin{tikzpicture}[yscale=0.7,xscale=0.8]
\begin{scope}[every node/.style={}]
    \node (C) at (3,3) {RP};

    \node (H) at (3, 1) {\textup{Collection $\land$ RP$^-$}};
    \node (I) at (4, -1) {RP$^-$};
 
    \node (O) at (2, -1) {Collection };
    
\end{scope}

\begin{scope}[>={stealth},
              every node/.style={fill=white,circle},
              every edge/.style={draw=black}]

    \path [->] (C) edge (H);
   \path [->] (H) edge (I);
    \path [->] (H) edge (O);
\end{scope}
\end{tikzpicture}
\end{center}
\end{figure}
\end{maintheorem}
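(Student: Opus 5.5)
Completeness of the diagram requires two kinds of results. First, the arrows shown must be provable. Second, every arrow not shown, and not obtainable by composing the shown ones, must fail in some model of $\ZFU$.

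The positive direction is short. RP implies Collection by Proposition \ref{prop:RP->powerRP-andCollection}. RP implies RP$^-$ directly: given $\varphi(x)$, take a transitive $t \ni x$ reflecting $\varphi$. Hence RP implies Collection $\land$ RP$^-$, and the two downward arrows are trivial.

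For the negative direction, it suffices to build three models of $\ZFU$:
\begin{enumerate}
\item a model $\Mcol$ of Collection in which RP$^-$ fails;
\item a model $\Mrp$ of RP$^-$ in which Collection fails;
\item a model of Collection $\land$ RP$^-$ in which RP fails.
\end{enumerate}
These refute every missing implication. The first two rule out Collection $\to$ RP$^-$ and RP$^-\to$ Collection. Either of them also refutes any single principle implying the conjunction, and hence implying RP. The third rules out the conjunction implying RP.

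By the Main Theorem of Section \ref{section:ZFUresults} and Corollary \ref{cor:equivalenceunderACA}, all three models must violate Tail and SVC, and must contain a non-well-orderable set of urelements. This points to permutation (Fraenkel--Mostowski style) models built over a ground model of $\ZFCU$ with a proper class of urelements. Such a model is given by a class of urelements $\A$, a group $\G$ of permutations of $\A$, and a normal filter $\F$ of subgroups. The model consists of the hereditarily symmetric objects, those $x$ with $\sym(x)\in\F$. The key step is the standard check that these models satisfy $\ZFU$. Replacement and Separation follow because definable operations commute with automorphisms.

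For (1), I would make the urelements a class-sized disjoint union of blocks $A_\alpha$, each a copy of an amorphous-like or Fraenkel-style set with symmetry given by finite supports. Collection is then verified from the ground model's Collection: for $\forall x\in w\exists y\,\varphi$, collect witnesses in the ground model and close under $\sym(w)$-orbits. Because supports are small, the orbit closure is still a symmetric set. RP$^-$ fails because of a sentence such as ``for every $\alpha$ there is a block of type $\alpha$ that is not well-orderable.'' This is true in the model, but any transitive set sees only boundedly many blocks.

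For (2), I would take a model where Collection fails. Example: the class of urelements is $\bigcup_n A_n$, each $A_n$ a proper class. For each $n$ some element exists, but no set meets all the relevant classes symmetrically. RP$^-$ would then be verified via a homogeneity argument: any true $\varphi(x)$ is reflected to a set-sized submodel $\Mtail$-like restriction over a set of urelements, using automorphisms to move witnesses into a fixed set.

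For (3), I would combine the two phenomena. The aim is a model whose set-sized pieces witness each single formula, so that RP$^-$ and Collection hold, while no single transitive set reflects a formula and its negation-witnesses simultaneously.

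I expect the main obstacle to be verifying RP$^-$ in the permutation models (2) and (3). That requires a uniform argument that truth in the model is reflected by symmetric transitive sets. I would try first to prove a lemma: for any support $E$, the relativized model built over a set of urelements containing $E$, closed under the relevant group, is elementary for formulas with parameters supported by $E$. This would be proved by an automorphism/homogeneity back-and-forth argument.
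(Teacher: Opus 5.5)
Your reduction is correct and matches the paper's. RP $\to$ Collection $\land$ RP$^-$ is immediate, and three models (Collection $\land\neg$RP$^-$; RP$^-\land\neg$Collection; Collection $\land$ RP$^-\land\neg$RP) refute every missing arrow. But all the content lies in the models, and your sketches of them do not work.

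\textbf{Constraints and model (1).} Your claim that all three models must violate Tail and SVC is false for model (2). The Main Theorem only gives Collection $\leftrightarrow$ RP under Tail $\lor$ SVC, and in model (2) both Collection and RP fail anyway. Indeed the paper's model for (2) is $\Mtail$, which satisfies Tail, SVC and even RP$^\sim$. (Note also that $\Mrp$ is the paper's name for your model (3), not (2).)

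In model (1), your witness sentence cannot refute RP$^-$. A transitive $t$ contains only boundedly many ordinals too, so ``for every $\alpha$ there is a block of type $\alpha$,'' relativized to $t$, only asks for blocks indexed by ordinals in $t$. Worse, suppose $\alpha\mapsto A_\alpha$ is definable in $M$, which is the natural way to make your sentence expressible. Then $W_\lambda=V_\lambda(\bigcup_{\alpha<\lambda}A_\alpha)^M$ is a definable increasing hierarchy of transitive sets exhausting $M$, and the L\'evy--Montague argument gives full RP there.

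The obstruction has to come from supports instead. In $\Mcol$ the sentence is ``every set is disjoint from some infinite set of urelements.'' Every $t$ is supported by a finite partition $P$ of some $A_\alpha\supseteq\ker(t)$. A swap argument shows each infinite set of urelements in $t$ avoiding $\bigcup(P\cap t)$ is cofinite in some $E\in P-t$, so $t$ cannot contain $|P-t|+1$ pairwise disjoint ones. Similarly, for Collection the issue is not whether an orbit closure is symmetric but whether its kernel is small enough to be a set of the model. That is what the tail property (Lemma \ref{Lemma:TailProperty->Col}) secures.

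\textbf{RP$^-$ in models (2) and (3).} Your homogeneity strategy is self-defeating. Moving witnesses, by automorphisms fixing the parameters, into one fixed set of urelements is exactly the tail property, which yields Collection; so it cannot be the mechanism in model (2). The lemma you propose also cannot hold: the part of the model built over a set $A$ of urelements is not elementary in the model. The model satisfies ``no set contains all urelements,'' while $M\cap V(A)$ contains $A$.

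The missing idea is to embed a copy of the \emph{whole} model, with its own proper class of urelements, as a definable class $M^*$ of $V(A)^M$. This uses an isomorphism that fixes the parameter's kernel: the row-swapping $\sigma$ of Lemma \ref{lemma:simulation} for $\Mtail$, or the map $\sigma$ into $V(T_0\cup A_1)$ for $\Mrp$. One then applies RP in $V(A)$ (Corollary \ref{cor:V(A)modelsRP}) relativized to $M^*$.

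For (3) you give no construction at all. The paper does not refute RP directly; it refutes RP$^\sim$, which RP implies. It uses the sentence ``every set of urelements is disjoint from some $A$ with $\aleph(A)=\omega$.'' Its witnesses in a supertransitive $t$ genuinely satisfy $\aleph(A)=\omega$, and hence are infinite narrow sets. The model is $\Mrp$: each urelement of $\Mcol$ becomes a countable row, and supports are short sets plus finitely many narrow sets. This keeps the tail property (hence Collection) and the simulation isomorphism (hence RP$^-$), while a swap argument shows no supertransitive $t$ can contain such a witness avoiding its support.
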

\noindent In particular, RP$^-$ turns out to be a very weak reflection principle in the absence of the Axiom of Choice. This is witnessed by the model $\Mtail$, in which RP$^-$, Tail and SVC all hold while Collection fails. These independence results reveal a structural feature obscured in the pure-set context: reflection principles in the presence of urelements behave as choice principles.

\subsection{Preliminaries} The language of urelement set theory, in addition to $\in$, contains a unary predicate $\A$ for urelements.  The axioms of $\ZFU$\footnote{This theory is also known as ZFA. In the previous work of the second author, this theory is called  $\ZFUR$, where R stands for Replacement.} include Foundation, Pairing, Union, Powerset, Infinity, Separation, Replacement, Extensionality \textit{for sets}, and the axiom that no urelements have members (see \cite[Section 1]{YaoAxiomandForcing} for the precise formulation). Importantly, $\ZFU$ allows a proper class of urelements. We will always work in $\ZFU$ unless stated otherwise.

For every object $x$, $\tc(x)$ denotes the transitive closure of $x$ and $\ker(x)$ (the \textit{kernel} of $x$) is the set of urelements in $\tc(\{x\})$. A set is \textit{pure} if its kernel is empty. $V$ denotes the class of all pure sets. $\ord$ is the class of all ordinals, which are transitive pure sets well-ordered by $\in$. For sets $x$ and $y$, $x \sim y$ ($x$ is equinumerous to $y$) if there is a bijection between $x$ and $y$; $x \preceq y$ if there is an injection from $x$ into $y$; and $x \preceq^* y$ if either there is a surjection from $y$ onto $x$ or $x = \emptyset$. An ordinal is \textit{initial} if it is not equinumerous to any smaller ordinals. For every set $x$, $\aleph(x)$ (\textit{the Hartogs number} of $x$) is the least initial ordinal that does not inject into $x$. $\A$ will also stand for the class of all urelements. $A \subseteq \A$ abbreviates ``$A$ is a set of urelements''; $B \subseteq \A - A$ abbreviates ``$B$ is a set of urelements disjoint from $A$''; and $x \preceq \A - A$ means ``$x$ is equinumerous to some set of urelements disjoint from $A$". For any $A \subseteq \A$, the $V_{\alpha}(A)$- hierarchy is defined in the standard way, i.e., 
\vspace{4pt}
\begin{itemize}
    \item [] $V_0(A) = A$;
    \item [] $V_{\alpha+1}(A) = P(V_{\alpha}(A)) \cup A$;
    \item [] $V_{\gamma}(A) = \bigcup_{\alpha < \gamma} V_\alpha(A)$, where $\gamma$ is a limit;
    \item [] $V(A) = \bigcup_{\alpha \in \ord} V_\alpha(A)$.
\end{itemize}
\vspace{4pt}
For every object $x$ and set $A \subseteq \A$, $x \in V(A)$ if and only if $\ker(x) \subseteq A$. $U = \bigcup_{A\subseteq\A} V(A)$ is the class of all objects. Every permutation $\pi$ of a set of urelements can be extended to an automorphism of $U$ canonically by letting $\pi$ pointwise fix every $a \in \A - dom(\pi)$ and let $\pi x$ be $\{\pi y : y \in x \}$ for every set $x$; for every set $x$, $\pi$ pointwise fixes $x$ whenever $\pi$ pointwise fixes $\ker(x)$. For any inner model $M$, $\Aut(M)$ denotes the class of all automorphisms of $M$. Finally, the following will be useful for verifying Collection in many situations.
\begin{prop}\label{prop:CollectionEquivs}
The following are equivalent over $\ZFU$.
\begin{enumerate}
\item Collection
\item $\forall w, p (\forall x \in w \ \exists y \varphi(x, y, p)   \rightarrow \exists A \subseteq \A \forall x \in w\  \exists y \in V(A) \  \varphi(x, y, p))$
\end{enumerate}
\end{prop}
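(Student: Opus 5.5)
We prove the two directions separately; both are short, and each reduces to an elementary observation about kernels and the $V_\alpha(A)$-hierarchy.

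\emph{(1) $\Rightarrow$ (2).} Assume Collection and suppose $\forall x \in w\, \exists y\, \varphi(x,y,p)$. Collection gives a set $v$ such that $\forall x \in w\, \exists y \in v\, \varphi(x,y,p)$. Put $A = \ker(v)$, which is a set of urelements. Every $y \in v$ satisfies $\tc(\{y\}) \subseteq \tc(\{v\})$, so $\ker(y) \subseteq \ker(v) = A$. By the stated characterization of $V(A)$ (namely $x \in V(A)$ iff $\ker(x) \subseteq A$), each such $y$ lies in $V(A)$. Hence $A$ witnesses (2).

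\emph{(2) $\Rightarrow$ (1).} Assume (2) and suppose $\forall x \in w\, \exists y\, \varphi(x,y,p)$. Fix $A \subseteq \A$ with $\forall x \in w\, \exists y \in V(A)\, \varphi(x,y,p)$. Since $V(A)$ is a proper class, we still need a set bound, and we obtain one via the rank function on the hierarchy. Define $F : w \to \ord$ by letting $F(x)$ be the least $\alpha$ such that $\exists y \in V_\alpha(A)\, \varphi(x,y,p)$. This is a definable class function with parameters $A$ and $p$, and it is total on $w$ by the choice of $A$. Replacement then makes $F[w]$ a set of ordinals, so it has a supremum $\beta$. Now take $v = V_{\beta}(A)$, or $V_{\beta+1}(A)$ to avoid fussing over whether the supremum is attained. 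This $v$ is a set because the hierarchy $V_\alpha(A)$ is set-sized at each stage, by Powerset and Union. Since the hierarchy is cumulative, every $x \in w$ has a witness $y \in V_{F(x)}(A) \subseteq v$.

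The only step needing any care is the second direction: the point is to replace the class $V(A)$ by a set bound, and this is done with Replacement on ordinals rather than any choice. I expect no real obstacle. The main thing to verify is that the stages $V_\alpha(A)$ are sets and cumulative in $\ZFU$, which follows by a routine induction on $\alpha$.
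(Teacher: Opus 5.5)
Your proposal is correct and matches the paper's proof. For (1)$\Rightarrow$(2) you spell out the paper's remark that every set lies in some $V(A)$ by taking $A=\ker(v)$; for (2)$\Rightarrow$(1) you take the supremum of the least stages $\alpha_x$ at which witnesses appear in the $V_\alpha(A)$-hierarchy, exactly as the paper does (passing to $V_{\beta+1}(A)$ is harmless but unnecessary, since cumulativity already gives $V_{\alpha_x}(A)\subseteq V_\beta(A)$).
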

\begin{proof}
It is clear that (1) $\to$ (2) since every set is in some $V(A)$. Assume (2) and suppose that $\forall x \in w \ \exists y \varphi(x, y, p)$. Fix some $A \subseteq \A$ such that $\forall x \in w\  \exists y \in V(A) \  \varphi(x, y, p)$. For every $x \in w$, let $\alpha_x$ be the least ordinal such that $\exists y \in V_\alpha(A) \varphi(x, y, p)$ and let $\alpha = \sup_{x \in w} \alpha_x$. Then $V_\alpha (A)$ is a collection set.
\end{proof}

\section{ZFU results}\label{section:ZFUresults}
To start with, we introduce certain weak choice principles in ZFU and review some known facts about how they interact with reflection principles, which will be used to prove our first main theorem.

\begin{definition}
\textit{Cardinality is parametrically definable} if there is a parametrically definable class function  $x \mapsto |x|$ such that for every set $x$ and $y$, $x \sim y$ if and only if $|x| = |y|$ ($x \mapsto |x|$ is said to be a \textit{cardinality assignment function}).
\end{definition}
\noindent Gauntt \cite{gauntt1967undefinability} and L\'evy \cite{levy1969definability} independently proved that it is consistent with $\ZFU$ that cardinality is not parametrically definable.
\begin{definition}
A set $A$ of urelements is \textit{universal} if every $B \subseteq \A$ injects into $V(A)$. \textit{Uni} is the axiom that there is a universal set. 
\end{definition}

\begin{lemma}\label{lemma:uni->HP}
Assume Uni.
\begin{enumerate}
\item Cardinality is parametrically definable. 

\item Every set of cardinals has an upper bound.
\end{enumerate}
\end{lemma}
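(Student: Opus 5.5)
Fix a universal set $A\subseteq\A$; it is the parameter for the cardinality assignment. The plan is to use Scott's trick relative to $A$ and to the structure of $V(A)$. The key observation is that every set $x$ is equinumerous to some element of $V(A)$. To see this, let $B=\ker(x)$. By universality, $B$ injects into $V(A)$ via some $f$. The map $f$ extends, by $\in$-recursion along $V(B)$, to an injective "relabelling" of $\tc(\{x\})$. Concretely, we replace each urelement $b\in B$ by a tag such as $\langle 0,f(b)\rangle$ and each set $y$ by $\langle 1,\{F(z):z\in y\}\rangle$. This gives an injection $F$ of $\tc(\{x\})$ into $V(A)$, which is well defined by Foundation. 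Injectivity is proved by $\in$-induction, using the tags to separate urelements from sets and Extensionality for sets. In particular, $F\restriction x$ shows that $x\sim F[x]\in V(A)$.

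For (1), define $|x|$ to be the set of all $y\in V_\alpha(A)$ with $y\sim x$, where $\alpha$ is least such that some element of $V_\alpha(A)$ is equinumerous to $x$. This is a set by Separation and is nonempty by the previous paragraph. It is definable from the parameter $A$. If $x\sim y$, then the same $\alpha$ and the same set arise, so $|x|=|y|$. Conversely, if $|x|=|y|$, pick any $z$ in this common set; then $x\sim z\sim y$.

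For (2), let $S$ be a set of cardinals in this sense. Each element of $S$ is a nonempty subset of $V_{\alpha_c}(A)$ for the appropriate $\alpha_c$. Let $\beta=\sup_{c\in S}\alpha_c$, obtained by Replacement. Every cardinal in $S$ is then represented by some set in $V_\beta(A)$. Consider $w=V_\beta(A)$, or its power set if "upper bound" must be strict; the plain version suffices. Every $y\in c$ with $c\in S$ is a subset of $V_\beta(A)$, so $y\preceq V_\beta(A)$ via inclusion. Hence $|V_\beta(A)|$ bounds $S$ under the order $|x|\le|y|$ iff $x\preceq y$, which is well defined on these cardinals.

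The main obstacle is the first step: the construction of the recursive injection $F$ into $V(A)$. We must ensure that the coded images land in $V(A)$; this holds because their kernels lie in $\ker$ of elements of $\mathrm{ran}(f)$, all of which are contained in $A$. We must also ensure that injectivity survives the mixing of urelements and sets, which the 0/1 tagging handles. Everything after that step is routine Scott's trick.
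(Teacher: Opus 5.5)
Your proof is correct and follows the paper's argument: extend an injection $\ker(x)\to V(A)$ along $\in$ to all of $\tc(\{x\})$, apply Scott's trick inside $V(A)$, and bound a set of Scott cardinals by a single set that contains all their representatives (your $V_\beta(A)$ plays the role of the paper's $\bigcup\bigcup X$). Your $0/1$ tagging is in fact more careful than the paper, whose untagged extension $y\mapsto\{f(z)\mid z\in y\}$ need not be injective (e.g.\ if $f(a)=\emptyset$ and $a,\emptyset\in x$); the paper additionally transfers (2) to an arbitrary cardinality assignment function via $|x|\mapsto|x|_{\text{Scott}}$, the form used in Theorem~\ref{thm:Tail+Collection->RP}(2), and your argument covers this if you take $\alpha_c$ least such that $c$ is the cardinal of some element of $V_{\alpha_c}(A)$.
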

\begin{proof}
(1) Let $A$ be a universal set. For any set $x$, an injection $f$ from $\ker(x)$ into $V(A)$ can be naturally extended to an injection from $\tc(x)$ into $V(A)$ by letting $f(y) = \{ f(z) \mid z \in y\}$ for every set $y \in \tc(x)$. Thus, every set injects into $V(A)$, and so we can use Scott's trick within $V(A)$ by setting $|x|_\text{Scott}$ to be the set of equinumerous sets of $x$ in $V(A)$ with the least rank.

(2) The cardinality assignment function defined using Scott's trick has the property that $\forall x \exists y \in |x|_\text{Scott} (x \sim y)$. So if $X$ is a set of cardinals under this cardinality assignment function, then $|\bigcup \bigcup X|_\text{Scott}$ is an upper bound of $X$. Given any cardinality assignment function $x \mapsto |x|$, the map $|x| \mapsto |x|_\text{Scott}$ is a natural isomorphism. So it follows that every set of cardinals under any cardinality assignment function has an upper bound.\end{proof}

\begin{definition}
Let $A$ be a set of urelements.
\begin{enumerate}
\item  $B$ is a \textit{duplicate} of $A$ if $B \subseteq \A - A$ and $A \sim B$.

\item Dup$(A)$  (\textit{duplication holds over} A) if  for every $B \subseteq \A - A$, there is a $C \subseteq \A - A$ that is a duplicate of $B$.

\item Dup$^+(A)$  (\textit{strong duplication holds over} A) if  for every $B \subseteq \A - A$, there is an infinite family $F$ of pairwise disjoint sets such that for each $C \in F$, $C \subseteq \A - A$ and $C$ has a subset that duplicates $B$.

\item Dup abbreviates $\exists A \subseteq \A \ \text{Dup}(A)$.

\item Dup$^+$ abbreviates $\exists A \subseteq \A \ \text{Dup}^+(A)$. 
\end{enumerate}
\end{definition}
\noindent It is consistent with $\ZFU$ that Dup fails (e.g., Dup fails in $\Mtail$ constructed in the next section). Let AC$^\A$ abbreviate the axiom that every set of urelements is well-orderable.
\begin{lemma}\label{lemma:dup(A)basics}
Let $A$ and $B$ be sets of urelements.
\begin{enumerate}
\item Dup($A$) $\land$ $A \subseteq B$ $\to$ Dup($B$). 
\item Dup$^+(A)$ $\land$ $A \subseteq B$ $\to$ Dup$^+(B)$. 
\item AC$^\A$ $\to$  Dup.
\item AC$^\A$ $\land$ Collection $\to$ Dup$^+$.
\end{enumerate}
\end{lemma}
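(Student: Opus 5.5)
Items (1) and (2) are immediate monotonicity arguments; (3) and (4) need choice on urelements plus a cardinality-comparison trick.

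\emph{Items (1) and (2).} Suppose Dup$(A)$ and $A \subseteq B$. Let $C \subseteq \A - B$. Then $C \subseteq \A - A$, so there is a duplicate $D \subseteq \A - A$ of $C$, but $D$ may meet $B$.

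To repair this, I would apply Dup$(A)$ to the set $C \cup (B - A)$, which is disjoint from $A$. This yields $D' \subseteq \A - A$ with a bijection $g : C \cup (B-A) \to D'$. The trouble is that $D'$ may still intersect $B - A$.

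A cleaner route is to iterate. Set $E = C \cup (B-A)$ and let $D'$ duplicate $E$. Then apply Dup$(A)$ to $E \cup D'$ to get $D''$, disjoint from $A$ and equinumerous with $E \cup D'$. Now $D''$ contains two disjoint copies of $C$, namely $g''(C)$ and the image of $g'(C)$. At most $|B-A|$ many points can be lost to $B$, but this count gives no control without choice, so I would instead look for a copy of $C$ that avoids $B-A$ entirely.

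Here is the trick. If $h : C \to \A - A$ is an injection, its image splits into the part landing in $B - A$ and the part landing outside. Take a duplicate $D$ of $C \cup (B - A)$ via $g$, and then a duplicate $D_2$ of $D \cup (B-A)$ via $g_2$. Now define $k$ on $C$ by
\[
k(c) = \begin{cases} g(c) & \text{if } g(c) \notin B, \\ g_2(g(c)) & \text{if } g(c) \in B. \end{cases}
\]
In the second case $g(c) \in B - A$, and $g_2(g(c))$ may itself lie in $B$, so I would instead route the second case through $g_2 \circ g$ applied to $B$-points. Since $g(c) \in B-A \subseteq \operatorname{dom} g$, one could take $g(g(c))$ instead; but that point lies in $D \cap$ image, which might again be in $B$.

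So I would choose $D$ disjoint from $B$ outright by duplicating $E = C \cup (B-A)$ and then using a Hilbert-hotel-free argument: map $c \mapsto g(c)$ if $g(c)\notin B$, and otherwise $c \mapsto g(b)$ where $b = g(c) \in B-A$. Injectivity and avoidance of $B$ need checking; the possible chain $g(c), g(g(c)), \dots$ can be handled by following the $g$-orbit until it first leaves $B - A$. This exit happens because the orbit of $c \notin B$ cannot cycle back inside $B - A$, and disjoint starting points give disjoint orbits, so the map is injective. Making this orbit argument airtight is the main obstacle for (1). Item (2) is the same argument applied to each member of the family $F$ (intersecting with $\A - B$), after first enlarging $B$ as above.

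\emph{Item (3).} Assuming AC$^\A$, I would take $A$ to be any set of urelements that is infinite, has maximal cardinality among a suitable set of sets, or is otherwise chosen so that every $B \subseteq \A - A$ embeds into $\A - (A \cup B)$. The key fact I would use is that the urelement class is a proper class. Given a well-orderable $B$, its cardinality $\kappa$ is an aleph, and $\A - (A\cup B)$ is not a set of size less than $\kappa$. Otherwise $\A$ would be a set. Since the complement is a proper class of well-orderable pieces, pick $\kappa$ many of its elements by well-ordering some set of them. This step needs "some subset of size $\kappa$ exists", which follows because the complement is a proper class and every set in it is well-orderable, so its cardinalities are unbounded; otherwise Replacement over ordinals bounds it. In fact Dup$(\emptyset)$ holds.

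\emph{Item (4).} Assuming Collection as well, given $B$ I would collect, for each $n \in \omega$, disjoint duplicates: using Collection obtain a set $A'$ such that for every bound there exist sets of urelements of size $\ge|B|\cdot\omega$ outside $B$. Well-order such a set of size $|B|\times\omega$ and split it into $\omega$ pieces to form $F$.
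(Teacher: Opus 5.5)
\textbf{Item (3).} This is the genuine gap, and (4) inherits it. Your conclusion that Dup$(\emptyset)$ holds can fail, and the inference behind it is not available in $\ZFU$.

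First, $\ZFU$ does not assert that $\A$ is a proper class; the paper itself uses models like $V(A)$ in which $\A$ is a set. If $\A$ is a nonempty set, then $\A$ has no duplicate.

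More seriously, the step ``the complement is a proper class of well-orderable pieces, so their sizes are unbounded, otherwise Replacement bounds it'' is a Collection-type inference. Here is a counterexample. Start with $U \models \ZFCU$ in which $\A$ is a set, and fix $A_0 \subseteq \A$ with $A_0$ and $\A - A_0$ both infinite. Let $M = \{x \mid \ker(x) - A_0 \text{ is finite}\}$. This is the small-kernel model for $\I = \{E \subseteq \A \mid E - A_0 \text{ is finite}\}$, the group of $\I$-preserving permutations, and the filter of all its subgroups. So $M \models \ZFU$ by Theorem~\ref{thm:FThmofSKPM}, and AC passes down from $U$. In $M$, $\A$ is a proper class, every subset of $\A - A_0$ is finite, and $A_0$ has no duplicate. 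Thus Dup$(\emptyset)$ fails, while Dup$(A_0)$ holds.

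What you need is to choose $A$ by minimizing a cardinal. For each $A$, let $\theta_A$ be the least cardinal not realized by any subset of $\A - A$, if there is one. By AC$^\A$, every $B \subseteq \A - A$ has $|B| < \theta_A$. Also $\theta_{A'} \le \theta_A$ whenever $A \subseteq A'$. If no $\theta_A$ is defined, Dup$(\emptyset)$ holds. Otherwise pick $A$ with $\theta_A$ least among all defined values. Then $\theta_{A\cup B} = \theta_A$ for every $B \subseteq \A - A$. Hence $|B| < \theta_{A\cup B}$ is realized by a subset of $\A - (A \cup B)$, and that subset is a duplicate of $B$.

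\textbf{Item (4).} Your sketch has the quantifiers reversed: Dup$^+$ needs one $A$ serving every $B$, while you produce $A'$ after $B$ is given. Nothing justifies a set of size $|B|\cdot\omega$ outside $A \cup B$. Instead, fix $A$ with Dup$(A)$ as above.
\begin{enumerate}
\item For infinite $B$, no Collection is needed. By AC$^\A$, a duplicate $C$ of $B$ has $|C| = |B| = |B|\cdot\aleph_0$, so $C$ splits into $\omega$ pairwise disjoint duplicates.
\item For finite $B$, you need an infinite subset of $\A - A$, and this is exactly where Collection enters. Iterating Dup$(A)$ gives subsets of $\A - A$ of every finite size. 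Collection over $\omega$ puts witnesses into one set, and the union $D$ of these witnesses is infinite. Well-order $D - B$ and cut it into $\omega$ blocks of size $|B|$ to get the family.
\end{enumerate}
In $M$ above, Collection fails, and $\A - A$ has no infinite subset whenever Dup$(A)$ holds.

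\textbf{Items (1) and (2).} For (1) you had the proof at your first step and abandoned it because of a misreading. A duplicate of $E = C \cup (B-A)$ is by definition a subset of $\A - E$. So the $D'$ given by Dup$(A)$ misses $A$, $B - A$ and $C$, and $g[C]$ is already a duplicate of $C$ inside $\A - B$. The orbit construction is unnecessary. As stated it is also unsound, since an injective forward orbit can remain in $B - A$ forever without cycling. With this correction your outline of (2) works: each member $X$ of the family obtained for $C \cup (B-A)$ contains a duplicate of $C \cup (B-A)$, so the pairwise disjoint sets $X - B$ each contain a duplicate of $C$.
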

\begin{proof}
See \cite[Proposition 11 and Lemma 12]{Yao2025Plenitude}.
\end{proof}

\begin{lemma}\label{lemma:Uni+Dup+Col->RP}
Assume that cardinality is parametrically definable. Then Dup$^+$ and Collection jointly imply RP. 
\end{lemma}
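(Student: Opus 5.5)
The plan is to prove the Tarski--Vaught criterion for a set of the form $V_\beta(B)$. Fix a formula $\varphi$ and a set $x$; we want a transitive $t \supseteq x$ reflecting $\varphi$. Fix $A_0$ with Dup$^+(A_0)$. By Lemma \ref{lemma:dup(A)basics}(2) we may enlarge $A_0$ so that $\ker(x) \subseteq A_0$, and Dup$^+$ still holds. The usual ZF argument builds an $\omega$-chain of sets closed under witnesses for the existential subformulas $\exists y\,\psi$ of $\varphi$. That needs a choice at each stage, namely a set of urelements over which the witnesses live. The idea is to replace these choices by \emph{cardinals}, which the parametrically definable cardinality assignment lets us handle definably, and then to realize all the cardinals at once using Dup$^+$.

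\textbf{Step 1 (homogeneity).} Suppose $B \supseteq A_0$, $v \in V(B)$, and $\exists y\,\psi(v,y)$. Then some witness lies in $V(B \cup C)$ with $C \subseteq \A - B$. If $C' \subseteq \A - B$ and $C' \sim C$, a permutation swapping $C$ and $C'$ and fixing $B$ pointwise fixes $v$, so it carries the witness into $V(B \cup C')$. The rank of the witness is preserved as well. Hence the only relevant data are the cardinal $|C|$ and the rank, and these are definable objects.

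\textbf{Step 2 (recursion on cardinals).} By recursion on $n$, define cardinals $\kappa_n$ and ordinals $\alpha_n$ with no choices, using the cardinality function. The requirement is the following. For every $B = A_0 \cup D$ with $D \subseteq \A - A_0$ and $|D| \le \kappa_n$, and every $v \in V_{\alpha_n}(B)$ and every existential subformula with a witness, there is a witness in $V_{\alpha_{n+1}}(B \cup C)$ for some $C \subseteq \A - B$ with $|C| \le \kappa_{n+1}$.

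That such $\kappa_{n+1}$ and $\alpha_{n+1}$ exist uses Collection. By Step 1 it suffices to fix one representative $D$ of each cardinality below $\kappa_n$. One then collects the needed cardinals $|C|$ into a set, via Proposition \ref{prop:CollectionEquivs}, and bounds them. Bounding is done by taking the cardinality of a union of collected sets, which absorbs them all. Here the disjointness has to be arranged, and that is where Dup$^+$ enters: it guarantees that duplicates of arbitrary $C$ can be found disjoint from $B$. Choosing ``one representative of each cardinality'' is itself a choice problem. So I would instead quantify over all such $D$ inside a single set, for instance all subsets of the collected kernel, and apply Collection to that set.

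\textbf{Step 3 (amalgamation).} Let $\kappa = \sup$ in the sense of a cardinal absorbing all the $\kappa_n$. One way is to take a single set $E$ realizing the union of the representatives collected, with $|E| \ge \kappa_n$ for all $n$. Apply Dup$^+(A_0)$ to $E$ to get infinitely many pairwise disjoint sets, each containing a duplicate of $E$. Set $B^* = A_0 \cup \bigcup_k E_k$ for $\omega$ of them, and $t = V_{\alpha_\omega}(B^*)$ with $\alpha_\omega = \sup_n \alpha_n$. Selecting countably many members of an infinite family is again choice-sensitive, so I would instead take the union of the whole family $F$ given by Dup$^+$.

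\textbf{Step 4 (verification).} Check the Tarski--Vaught condition. Given $v \in t$, note that $\ker(v)$ is a subset of $B^*$, but possibly a large one. The main obstacle is here: the recursion controls parameters over $A_0 \cup D$ with $|D| \le \kappa_n$, whereas $v$ may use urelements from many of the $E_k$. I expect to fix this by making the recursion uniform in the size of the whole amalgamated set, i.e.\ arranging $\kappa_{n+1} \ge \kappa_n$ to include $|B^*|$-sized parameters. I would then use the infinitely many disjoint spare copies from Dup$^+$ to find, for each such $v$, a duplicate of the needed $C$ inside $B^*$ but disjoint from $\ker(v)$. Step 1 then moves the witness into $t$. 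This ``spare room inside $B^*$'' argument is the step I would work hardest to get right, and it is exactly why strong duplication rather than plain Dup is assumed.
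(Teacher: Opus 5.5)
Your Step 3 choice of the reflecting set cannot work, and this is a genuine gap rather than a detail. For any set $B$ of urelements and any $\beta \geq 1$ we have $B \in V_\beta(B)$ and $\A \cap V_\beta(B) = B$. So $t = V_{\alpha_\omega}(B^*)$ fails to reflect even $\varphi(v) \equiv \exists a\,(\A(a) \wedge a \notin v)$. In the only nontrivial case, where $\A$ is a proper class, $\varphi(B^*)$ holds but $\varphi^t(B^*)$ fails. Hence no set of the form $V_\beta(B)$ can witness RP for this $\varphi$, whatever $B$ is.

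This is exactly the obstacle you anticipated in Step 4, and neither remedy you sketch gets around it. There is no ``spare room inside $B^*$ disjoint from $\ker(v)$'' when $\ker(v) = B^*$. Asking the recursion to handle $|B^*|$-sized parameters is circular, since $B^*$ is built from the set $E$ that the recursion outputs, and members of $F$ may be far larger than $E$. Taking the union of the whole family $F$ and then a single $V_\beta$ of it discards precisely what Dup$^+$ provides.

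The missing idea is to use the infinitude of $F$ through its \emph{finite} subfamilies, so that $t$ is a directed union rather than a single level. Enlarge $A_0$ to contain $\ker(p)$ as well, where $p$ is the parameter of the cardinality function. Let the recursion produce sets $S_n$ of cardinals and ordinals $\alpha_n$ with the following property. Whenever $D \subseteq \A - A_0$, $|D| \in S_n$, $v \in V_{\alpha_n}(A_0 \cup D)$ and $\exists y\,\psi(v,y)$ holds, some witness $y$ has rank $<\alpha_{n+1}$ and $|D \cup C| \in S_{n+1}$, where $C = \ker(y) - (A_0 \cup D)$. By homogeneity over $A_0$, this does not depend on which $D$ of a given size is used.

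Use Collection only to get one $E \subseteq \A - A_0$ into which every cardinal in $\bigcup_n S_n$ injects. A union of collected representatives does this with no disjointness involved, so this is not where Dup$^+$ enters. Apply Dup$^+(A_0)$ to $E$ to get $F$, and put
\begin{equation*}
t = \bigcup \{ V_{\alpha_n}(A_0 \cup D) \mid n < \omega,\ X \in [F]^{<\omega},\ D \subseteq \textstyle\bigcup X,\ |D| \in S_n \}.
\end{equation*}
A parameter $v \in V_{\alpha_n}(A_0 \cup D)$ with $D \subseteq \bigcup X$ involves only finitely many members of $F$. Any $G \in F - X$ is disjoint from $A_0 \cup D$ and contains a copy of $E$, hence a copy $C'$ of $C$. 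An automorphism $\pi$ fixing $A_0 \cup D$ pointwise with $\pi C = C'$ then moves the witness into $V_{\alpha_{n+1}}(A_0 \cup D \cup C') \subseteq t$. When $C \cap C' \neq \emptyset$, that automorphism (like your swap in Step 1) needs an intermediate duplicate from Dup$(A_0 \cup D)$. No $\omega$-sequence is ever selected from $F$.

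Two smaller repairs are needed in Step 2. First, a single $\kappa_{n+1}$ cannot be singled out without choice, so carry the sets $S_n$, defined by least-rank selection. This is legitimate because the values of a cardinality function with parameter $p$ are fixed by every automorphism fixing $\ker(p)$ pointwise, and hence lie in $V(\ker(p))$. Second, close the $S_n$ downward and under finite sums, so that several parameters living over different $D$'s can be combined.
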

\begin{proof}
See \cite[Lemma 16]{Yao2025Plenitude}.
\end{proof}

\subsection{Tail $\land$ Collection $\to$ RP}
\begin{definition}
Let $A$ be a set of urelements.
\begin{enumerate}
\item $A$ is \textit{maximal} if every $B \subseteq \A$ injects into $A$.
\item $T$ is a \textit{tail} of $A$, if $T \subseteq \A - A$  and every $C \subseteq \A - A$ injects into $T$.
\item \textit{Max} is the axiom that there is a maximal set of urelements.
\item \textit{Tail} is the axiom that every set of urelements has a tail.
\end{enumerate}
\end{definition}
\noindent It is shown in \cite{YaoAxiomandForcing} that Tail implies RP over ZFCU. Here we show that Tail and Collection jointly imply RP in the choiceless context (while neither of them implies RP, as shown in the next section). 
\begin{lemma}\label{Lemma:Tail+Col->Dup+}
Tail $\land$ Collection $\to$ Dup$^+$
\end{lemma}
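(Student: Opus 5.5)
The plan is to build $A$ out of iterated tails, and then use Collection to obtain an infinite pairwise disjoint family of further iterated tails above any given $B$.

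\textbf{Choice of $A$.} Let $T_0$ be a tail of $\emptyset$; it is a maximal set of urelements. Let $T_1$ be a tail of $T_0$, and put $A = T_0 \cup T_1$. I will aim to show Dup$^+(A)$; by Lemma \ref{lemma:dup(A)basics}(2) it suffices to prove Dup$^+$ over some set, so we may also enlarge $A$ later if needed.

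\textbf{Step 1: the iterated tails above $B$.} Fix $B \subseteq \A - A$. Recursively we want $S_0$ a tail of $A \cup B$ and $S_{n+1}$ a tail of $A \cup B \cup S_0 \cup \dots \cup S_n$. Tails are not unique, so we cannot simply define the sequence. Instead, for each $n \in \omega$ there exists a finite sequence $\langle S_0, \dots, S_n\rangle$ with this property, and Collection (in the form of Proposition \ref{prop:CollectionEquivs}) yields a set $Y$ containing such a sequence of every finite length. From $Y$ I intend to extract one infinite pairwise disjoint family canonically. Let $D_0 = \bigcup\{\ker(y) : y \in Y\} - (A \cup B)$. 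Take a tail $R_1$ of $A \cup B \cup D_0$, collect again, and iterate $\omega$ times. The iteration over $\omega$ is again justified by Collection applied to the statement ``there is a length-$(n+1)$ chain of such collecting sets.'' The differences $D_{n+1} - D_n$, or the tails $R_n$, are pairwise disjoint and disjoint from $A \cup B$. Each $R_n$ is a tail of a set containing $A \cup B$, so every set of urelements disjoint from that set injects into $R_n$.

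\textbf{Step 2 (main obstacle): $B$ embeds into each $R_n$.} The definition of tail only gives injections of sets \emph{disjoint} from the base set, while $B$ lies inside the base. The route is to move $B$ to a disjoint copy. Since $B$ is disjoint from $T_0$, $B$ injects into $T_1$; also, since $T_0$ is maximal, $B$ injects into $T_0$. So the task reduces to showing that $T_0$ or $T_1$ injects into a set of urelements disjoint from $A \cup B \cup D_n$. I would first try to arrange this by building one extra tail level into $A$. Taking $A = T_0 \cup T_1 \cup T_2$ with $T_2$ a tail of $T_0 \cup T_1$, the hope is that the sizes of $T_1$ and $T_2$ are realized again arbitrarily far out, using the defining property of tails together with maximality of $T_0$. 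This would give an injection of $T_1$ into $R_n$, and composing with $B \hookrightarrow T_1$ shows that each $R_n$ contains a duplicate of $B$. If this comparison fails, the fallback is to choose $A$ first to satisfy Dup$(A)$, extracted from Tail by the same collecting argument. We would then iterate duplicates of $B$ with Collection to get the infinite family, and appeal to Lemma \ref{lemma:dup(A)basics}(2) to pass to the enlarged base. This cardinal comparison between $T_1$ and the far-out tails is the step I expect to be the genuine difficulty.
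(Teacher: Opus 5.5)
There is a genuine gap, and it is the step you flag yourself. Nothing in the proposal shows that $B$ (or $T_1$) is realized outside $A\cup B\cup D_n$. With your choice of $A$ this is in fact false in general.

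For a concrete failure, work in ZFCU with $\A=\{a_n : n<\omega\}$ a set, where Tail and Collection hold trivially. Then $T_0=\{a_{2n} : n<\omega\}$ is a tail of $\emptyset$, $T_1=\{a_{4n+1} : n<\omega\}$ is a tail of $T_0$, and $T_2=\{a_{8n+3} : n<\omega\}$ is a tail of $T_0\cup T_1$. Yet for $A=T_0\cup T_1$ (resp.\ $A=T_0\cup T_1\cup T_2$), the set $B=\A-A$ has no duplicate disjoint from $A$, since $\A-(A\cup B)=\emptyset$.

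So the hope that the sizes of $T_1,T_2$ recur arbitrarily far out cannot come from the tail property and maximality alone. Collection has to be used to make $A$ absorb everything that is \emph{not} realized arbitrarily far out. Collecting chains of tails, as in your Step 1, does not achieve this. Your fallback (``choose $A$ with Dup$(A)$ by the same collecting argument'') states the goal without giving an argument.

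The missing idea in the paper is to collect witnesses of non-realizability. Let $E$ be a tail of $\emptyset$, apply Collection to $X=\{B\subseteq E : \exists C\subseteq\A\,(B\not\preceq\A-C)\}$, and let $A$ be the union of the collected witnesses $C$. Then any $B\subseteq E$ with $B\preceq\A-A$ satisfies $B\preceq\A-C$ for \emph{every} $C\subseteq\A$. By maximality of $E$, this transfers to every set of urelements. Hence any tail $T$ of $A$ has a copy $T'\subseteq\A-(A\cup T)$.

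Your Step 1 also has a second problem. Iterating ``take a tail, collect again'' $\omega$ times needs dependent choices. Collection only gives you, in one set, some chain of length $n$ for each $n$, not a single infinite chain.

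The paper avoids any iteration. Since $T$ is a tail of $A$, we have $T\cup T'\preceq T\sim T'$, so $T\cup T'\sim T'$ by Cantor--Schr\"oder--Bernstein. For a bijection $f:T\cup T'\to T'$, the map $(d,n)\mapsto f^{n+1}(d)$ injects $T\times\omega$ into $T'$. This gives $\omega$ pairwise disjoint copies of $T$ outside $A\cup T$. Applying this to the tail $T\cup B$, for a given $B\subseteq\A-A$, yields Dup$^+(A)$.
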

\begin{proof}
Let $E$ be a tail of $\emptyset$ and let $X= \{B \subseteq E \mid \exists C \subseteq \A (B \not\preceq \A - C)\}$. Using Collection on X, we get a set $A \subseteq \A$ such that for each $B \in X$, there is some $C \subseteq A$ with $B \not\preceq \A - C$. We show that Dup$^+(A)$. Since for every $D \subseteq \A - A$, $T \cup D$ is a tail of $A$ whenever $T$ is a tail of $A$, it suffices to show that every tail of $A$ can be duplicated over $A$ infinitely often. 

Let $T$ be a tail of $A$. Suppose $T \not\preceq \A - (A \cup T)$. Since $T \sim B$ for some $B \subseteq E$, it follows that there is some $C \subseteq A$ such that $B \not\preceq \A - C$, which contradicts the assumption that $T \preceq \A - A$. Thus, there is some $T' \subseteq \A - (A \cup T) $ such that $T' \sim T$. So $(T \cup T') \sim T$ and let $f$ be a bijection from $T \cup T'$ to $T'$. Define $g: T \times \omega \to T'$ as $g (d, n) = f^{n+1}(d)$ for every $\<d, n> \in  T \times \omega$. It is routine to check that $g$ is an injection. This shows that $(T' \times \omega) \sim T'$, which produces the desired infinite family of sets.\end{proof}

\noindent Let $\AcardSet$ abbreviate ``Cardinality is parametrically definable and $\{|A| \mid A \subseteq \A\}$ is a set". Note that the conjunction Uni$\,\wedge \, \AcardSet$ is equivalent to the existence of a set which every set of urelements injects into.
\begin{theorem}\label{thm:Tail+Collection->RP}\
\begin{enumerate}
\item Tail $\to$ Max  $\to$ Uni$\,\wedge \, \AcardSet$ $\to \AcardSet$.

\item None of the implications in (1) can be reversed.

\item Assume Collection. All principles in (1) are equivalent and imply RP.

\end{enumerate}

\end{theorem}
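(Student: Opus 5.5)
For (1) I would verify each implication directly from the definitions. For Tail $\to$ Max, let $E$ be a tail of $\emptyset$. Then every $B \subseteq \A = \A - \emptyset$ injects into $E$, so $E$ is maximal.

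For Max $\to$ Uni $\wedge$ $\AcardSet$, let $A$ be maximal. Every $B \subseteq \A$ injects into $A \subseteq V(A)$, so $A$ is universal. Lemma \ref{lemma:uni->HP}(1) then gives a cardinality assignment, namely the Scott cardinality computed inside $V(A)$. Every $B \subseteq \A$ is equinumerous to some subset of $A$, so $\{|B| \mid B \subseteq \A\} = \{|B'| \mid B' \in P(A)\}$, and this is a set by Replacement applied to $P(A)$. The last implication in (1) is trivial.

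For (3), assume Collection. By (1) it suffices to show $\AcardSet \to$ Tail and Tail $\to$ RP.

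\emph{$\AcardSet \to$ Tail.} Fix $A \subseteq \A$ and let $K = \{|B| \mid B \subseteq \A - A\}$, which is a set by Separation from the set $\{|B| \mid B \subseteq \A\}$. Every $c \in K$ has a witness $B \subseteq \A - A$ with $|B| = c$. Applying Collection in the form of Proposition \ref{prop:CollectionEquivs}, we get a set $W$ that contains such a witness for each $c \in K$. Put $T = \bigcup\{B \in W \mid B \subseteq \A - A\}$. Then any $C \subseteq \A - A$ has the same cardinality as some $B \subseteq T$, so $C$ injects into $T$, and $T$ is a tail of $A$.

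\emph{Tail $\to$ RP.} Lemma \ref{Lemma:Tail+Col->Dup+} gives Dup$^+$. Tail implies Uni by (1), so by Lemma \ref{lemma:uni->HP} cardinality is parametrically definable. Lemma \ref{lemma:Uni+Dup+Col->RP} then yields RP.

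For (2), I would exhibit three separating models in the style of permutation models with a proper class of urelements.

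\emph{Uni without $\AcardSet$.} Take ZFCU with a proper class of urelements, say $\kappa$ many for each cardinal $\kappa$. Any countably infinite $A$ is universal, since pure sets of every size exist in $V(A)$ and sets of urelements are well-orderable. But the cardinalities of sets of urelements are unbounded, so $\AcardSet$ fails.

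\emph{Uni $\wedge$ $\AcardSet$ without Max.} Use a model in which the urelements form a proper class but every set of urelements is finite. Here $\emptyset$ is universal, since every finite set injects into $\omega$. The cardinals of sets of urelements form $\omega$, so $\AcardSet$ holds. No finite set is maximal, so Max fails. I would obtain this model by the usual finite-support construction over a class of atoms and check Replacement by a support argument. It necessarily violates Collection, which is consistent with (3).

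\emph{Max without Tail.} I would look for a model containing a maximal set $A$ in which some set of urelements has no tail. For example, make the urelements outside a fixed set $A_0$ such that every set of them is finite, while $A_0$ absorbs all finite sizes. Then $A_0$ is maximal, yet $A_0$ itself has no tail, since $\A - A_0$ contains arbitrarily large finite sets and no finite $T \subseteq \A - A_0$ absorbs them all.

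The main obstacle is (2), specifically building these models and verifying $\ZFU$ in them, Replacement in particular. Parts (1) and (3) are routine given the earlier lemmas.
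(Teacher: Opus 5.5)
Your arguments for (1) and (3) are correct and follow the paper's proof. In (1), a tail of $\emptyset$ is maximal, and a maximal set is universal with $\{|B| \mid B\subseteq\A\}$ the image of its powerset. In (3), you collect witnesses for the set of cardinals $\{|B| \mid B\subseteq\A-A\}$ to build a tail, then combine Lemmas \ref{Lemma:Tail+Col->Dup+}, \ref{lemma:uni->HP} and \ref{lemma:Uni+Dup+Col->RP}.

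The gap is in (2), in your first model, which addresses the wrong direction. The arrow whose converse you must refute is Uni $\wedge\,\AcardSet \to \AcardSet$, so you need a model of $\AcardSet \wedge \neg$Uni. Your $\ZFCU$ model satisfies Uni $\wedge\,\neg\AcardSet$. Both ends of that arrow fail there, so it only shows Uni $\not\to \AcardSet$, which is not one of the required separations.

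No model of this kind can supply the right separation. Under AC$^\A$ every set of urelements injects into an ordinal, so $\emptyset$ is universal and Uni holds outright. Under $\AcardSet$, Uni is equivalent to the set $\{|A| \mid A \subseteq \A\}$ having an upper bound:
\begin{itemize}
\item one direction is Lemma \ref{lemma:uni->HP}(2);
\item conversely, if every $A\subseteq\A$ injects into a fixed $x$, then $\ker(x)$ is universal.
\end{itemize}
So the missing ingredient is a genuinely choiceless model in which cardinality is definable and the cardinalities of sets of urelements form a set with no upper bound. This is the hard case of (2), and the paper obtains it from the first author's construction rather than building it by hand.

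Your other two models are fine, and unlike the paper you construct them rather than cite them. Take a $\ZFCU$ model with countably many urelements. The class of objects with finite kernel satisfies Uni $\wedge\,\AcardSet\wedge\neg$Max. Now fix a countably infinite $A_0$ with countably many urelements outside it; the class of $x$ with $\ker(x)-A_0$ finite satisfies Max $\wedge\,\neg$Tail. Both models even satisfy AC, so they serve the same role as the $\ZFCU$ models the paper cites from the second author's earlier work.
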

\begin{proof}
(1) Tail implies Max because any tail of $\emptyset$ is maximal. Max implies Uni and $\AcardSet$ because if $B \subseteq \A$ is maximal, it is universal so cardinality is parametrically definable by Lemma \ref{lemma:uni->HP}, and $\{|A| \mid A \subseteq \A\} = \{|A| \mid A \subseteq B\}$, which is a set.

(2) The model in \cite[Theorem 2.17 (4)]{YaoAxiomandForcing} shows that Max does not imply Tail over ZFCU, and Uni $\land$ $\AcardSet$ does not imply Max over ZFCU by the model in \cite[Theorem 2.17 (3)]{YaoAxiomandForcing}. Lastly, in the model constructed by the first author in \cite{GlazerMOFUnboundedCards}, $\AcardSet$ holds but the set $\{|A| \mid A \subseteq \A\}$ has no upper bound, which implies that Uni fails in the model by Lemma \ref{lemma:uni->HP}.

(3) Assume Collection and $\AcardSet$. We show that Tail holds. For any $A \subseteq \A$, since $X = \{|B| \mid B \subseteq \A - A\}$ is a set, by Collection there is a set $v$ such that for each $|B| \in X$ there is some $B' \subseteq \A - A$ in $v$ with $|B'| = |B|$. It follows that $T = \bigcup\{B \in t \mid B \subseteq \A - A \}$ is a tail of $A$. Now assume Collection and Tail. Then Uni holds by (1), so it follows from Lemma \ref{lemma:uni->HP}, \ref{lemma:Uni+Dup+Col->RP} and \ref{Lemma:Tail+Col->Dup+} that RP holds.
\end{proof}

\begin{corollary}\label{cor:V(A)modelsRP}
For every $A \subseteq \A$, $V(A) \models $ RP.
\end{corollary}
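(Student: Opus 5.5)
The plan is to apply Theorem~\ref{thm:Tail+Collection->RP}(3) inside the inner model $V(A)$. So we need to check three things in $V(A)$: that ZFU holds, that Collection holds, and that Tail holds. Then RP follows.

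First, $V(A)$ is a transitive class closed under the usual operations, and $A \in V(A)$. Its urelements are exactly the elements of $A$, which form a \emph{set} there. The ZFU axioms (Foundation, Pairing, Union, Powerset, Infinity, Separation, Replacement, Extensionality for sets) relativize to $V(A)$ in the standard way. Powerset needs a little care: for $x \in V(A)$ we have $P(x) \subseteq V(A)$ since kernels only shrink, so $P(x)^{V(A)} = P(x)$. ZFU does not demand a proper class of urelements, so the fact that $V(A)$ has only a set of them causes no problem.

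Second, Tail holds in $V(A)$. Given $B \subseteq A$ in $V(A)$, take $T = A - B$. Then $T \in V(A)$, and every $C \subseteq A - B$ is a subset of $T$, so it injects into $T$ by inclusion. Hence $T$ is a tail of $B$ in $V(A)$.

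Third, and this is the only point needing real thought, Collection holds in $V(A)$. We use Proposition~\ref{prop:CollectionEquivs}(2), applied inside $V(A)$. Suppose $\forall x \in w \, \exists y \in V(A)\, \varphi^{V(A)}(x,y,p)$. We want a set of urelements $A' \subseteq A$ in $V(A)$ such that every $x \in w$ has a witness in $V(A')$. The choice $A' = A$ works trivially, because $V(A)^{V(A)} = V(A)$. Equivalently, we can verify Collection directly. For each $x \in w$, let $\alpha_x$ be the least ordinal such that some witness lies in $V_{\alpha_x}(A)$. This is definable in $V$ with parameter $A$, and $\varphi^{V(A)}$ is a formula of $V$. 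Replacement in $V$ gives $\alpha = \sup_{x \in w} \alpha_x$, and then $V_\alpha(A) \in V(A)$ is a collection set. Alternatively, we can run the argument of Proposition~\ref{prop:CollectionEquivs} inside $V(A)$ using its own Replacement, since $V_\alpha(A)$ is definable in $V(A)$ from the parameter $A$.

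With ZFU, Collection and Tail established in $V(A)$, Theorem~\ref{thm:Tail+Collection->RP}(3) gives $V(A) \models \mathrm{RP}$. The main obstacle is minor: we must make sure the relativizations are legitimate, in particular that $V_\alpha(A)$ and Tail are computed the same way inside $V(A)$ as outside. We also have to keep in mind that Theorem~\ref{thm:Tail+Collection->RP} is proved in ZFU without any assumption that the urelements form a proper class.
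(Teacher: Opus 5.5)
Your proof is correct and follows the paper's argument: $V(A)$ is an inner model of ZFU whose urelements form the set $A$, so Tail holds trivially, Collection follows from Proposition~\ref{prop:CollectionEquivs}, and Theorem~\ref{thm:Tail+Collection->RP}(3) then yields RP. One notational slip: in this paper $V$ denotes the class of pure sets, so in your direct verification of Collection the definability and Replacement should be taken in the ambient universe $U$, not in $V$.
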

\begin{proof}
It is clear that $V(A)$ is an inner model of ZFU. $V(A) \models \A \text{ is a set}$ since $A \in V(A)$, so $V(A) \models $ Tail and $V(A) \models$ Collection by Proposition \ref{prop:CollectionEquivs}. Therefore, $V(A) \models$ RP by Theorem \ref{thm:Tail+Collection->RP}.
\end{proof}

\subsection{SVC $\land$ Collection $\to$ RP}
\begin{definition}
Let $S$ be a set. 
\begin{enumerate}
\item SVC(S) if for every set $x$ there is an ordinal $\alpha$ such that $x \preceq^* S \times \alpha$.
\item SVC$^\A$(S) if for every $A \subseteq \A$ there is an ordinal $\alpha$ such that $A \preceq^* S \times \alpha$.
\item SVC abbreviates $\exists S\ $SVC$(S)$, and SVC$^\A$ abbreviates $\exists S\ $SVC$^\A(S)$
\end{enumerate}
\end{definition}
\noindent Blass \cite{blassInjec1979} showed that SVC is equivalent to ``AC holds in a forcing extension''. By a similar argument, one can show that SVC$^\A$ is equivalent to ``AC$^\A$ holds in a forcing extension''. SVC is strictly stronger than SVC$^\A$ as the latter holds whenever $\A$ is a set.
\begin{prop}\label{prop:SVC->Uni}
SVC$^\A$ $\to$ Uni.
\end{prop}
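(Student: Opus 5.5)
We take $A = \ker(S)$ and show that $A$ is a universal set, where $S$ witnesses SVC$^\A(S)$. So fix $B \subseteq \A$; we need an injection from $B$ into $V(A)$. If $B = \emptyset$ there is nothing to prove. Otherwise SVC$^\A(S)$ gives an ordinal $\alpha$ and a surjection $f \colon S \times \alpha \to B$.

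The key point is that a surjection can be inverted injectively into the powerset without any choice. Define $g \colon B \to P(S \times \alpha)$ by $g(b) = f^{-1}[\{b\}] = \{ p \in S \times \alpha \mid f(p) = b\}$. Each fiber is nonempty because $f$ is onto. Fibers of distinct points are disjoint. Hence distinct $b$ have distinct (nonempty, disjoint) images, and $g$ is an injection. This step uses no choice: $g$ is defined outright from $f$, rather than by selecting one preimage of each point.

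It remains to check that the range of $g$ lies in $V(A)$. We use the characterization from the preliminaries: $x \in V(A)$ iff $\ker(x) \subseteq A$.
\begin{itemize}
\item Ordinals are pure.
\item A Kuratowski pair $\langle s, \beta\rangle$ with $s \in S$ and $\beta < \alpha$ has kernel contained in $\ker(s) \subseteq \ker(S) = A$.
\item Hence $\ker(S \times \alpha) \subseteq A$.
\item Any subset $y$ of $S\times\alpha$ satisfies $\tc(\{y\}) \subseteq \{y\} \cup \tc(S \times \alpha)$, so $\ker(y) \subseteq A$.
\end{itemize}
Thus $P(S\times\alpha) \subseteq V(A)$, and $g$ witnesses that $B$ injects into $V(A)$. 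Since $B$ was arbitrary, $A$ is universal and Uni holds.

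I expect no real obstacle here. The only points needing care are that $g$ is defined by taking full fibers rather than by choosing one preimage per point, and the routine kernel computation showing that subsets of $S \times \alpha$ lie in $V(\ker(S))$.
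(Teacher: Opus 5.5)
Your proof is correct and follows the paper's own argument: the paper also takes $\ker(S)$ as the universal set, and it observes that $B \preceq^* S\times\alpha$ gives an injection of $B$ into $P(S\times\alpha) \subseteq V(\ker(S))$. You spell out the two steps the paper leaves implicit, namely the choice-free fiber map $b \mapsto f^{-1}[\{b\}]$ and the kernel computation.
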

\begin{proof}
Suppose  SVC$^\A$(S) for some set $S$. For every $A \subseteq \A$, since $ A \preceq^* S \times \alpha$ for some ordinal $\alpha$, $A$ injects into $P(S \times \alpha)$ and hence into $V(\ker(S))$. This shows that $\ker(S)$ is universal. \end{proof}

\noindent We review the following fact to verify that both SVC and SVC$^\A$ are equivalent to their injective version.
\begin{prop}
For every set $x$, $S$ and ordinal $\alpha$,  $x \preceq P(S) \times \alpha$ if  $x \preceq^* S \times \alpha$.
\end{prop}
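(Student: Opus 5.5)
The plan is to invert the surjection fiberwise, using that ordinals are well-ordered, so no choice is needed to pick preimages level by level.

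If $x = \emptyset$, the conclusion is trivial, since $\emptyset$ injects into anything. Otherwise, fix a surjection $g\colon S \times \alpha \to x$.

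For each $y \in x$, let $\beta_y$ be the least ordinal $\beta < \alpha$ such that some $s \in S$ has $g(s,\beta) = y$. Such a $\beta$ exists because $g$ is onto. Next set
\[
F_y = \{ s \in S \mid g(s, \beta_y) = y\},
\]
which is a nonempty subset of $S$ and hence an element of $P(S)$. Finally define $f\colon x \to P(S) \times \alpha$ by $f(y) = \langle F_y, \beta_y\rangle$. This $f$ is definable from $g$, so it is a set by Separation and Replacement, and no choices are made: we take the least ordinal and then the whole fiber over it.

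To see that $f$ is injective, suppose $f(y) = f(y')$. Then $\beta_y = \beta_{y'} = \beta$ and $F_y = F_{y'}$. Since $F_y$ is nonempty, pick any $s$ in it; then $y = g(s,\beta) = y'$.

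There is no real obstacle here. The only point needing care is to avoid choosing a single preimage $s \in S$, which would require choice on $S$. Taking the entire fiber $F_y \in P(S)$ at the least level $\beta_y$ is exactly what removes that dependence. This also explains why $P(S)$ rather than $S$ appears in the conclusion.
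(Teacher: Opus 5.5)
Your proof is correct and is essentially the paper's argument: take the least level $\beta_y$ at which $y$ has a preimage, and send $y$ to the pair consisting of the whole fiber at that level together with $\beta_y$. You also handle the case $x=\emptyset$ explicitly, which the paper leaves implicit.
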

\begin{proof}
Let $f$ be a surjection from $S \times \alpha$ onto $x$. For every $y \in x$, let $\beta_y$ be the least ordinal such that $(S \times \{\beta_y\}) \cap f^{-1}[\{y\}] \neq \emptyset$, and let $S_y = \{ s \in S \mid \<s, \beta_y> \in f^{-1}[\{y\}]\}$. The map $y \mapsto \<S_y, \alpha_y>$ is an injection from $x$ into $P(S) \times \alpha$.
\end{proof}

\begin{lemma}\label{lemma:SVC+Col->Dup+}
SVC$^\A$ $\land$ Collection $\to$ Dup$^+$
\end{lemma}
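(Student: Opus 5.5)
The plan is to mimic the proof of Lemma \ref{Lemma:Tail+Col->Dup+}. The idea is to use Collection once to absorb all the ``bad'' types of urelement sets into a single set $A$, and then to show that every $B \subseteq \A - A$ can be duplicated over $A$ infinitely often. Since SVC$^\A$ does not give a tail, the types will be read off from the fixed set $S$ instead. Suppose SVC$^\A(S)$. By the preceding proposition, every $B \subseteq \A$ injects into $P(S)\times\alpha$ for some ordinal $\alpha$. Such an injection $f$ splits $B$ into a well-ordered disjoint union $B=\bigsqcup_{\beta<\alpha}B_\beta$, where $B_\beta = f^{-1}[P(S)\times\{\beta\}]$, and each $B_\beta$ is equinumerous to some $Y_\beta \subseteq P(S)$. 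The point is that the relevant types now live in the fixed set $P(P(S))$, independent of $\alpha$.

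Step 1 (Collection). Let
\[ X=\{Y\subseteq P(S)\mid \exists C\subseteq\A\,(Y\preceq \A \land Y\not\preceq \A - C)\}. \]
By Collection (in the form of Proposition \ref{prop:CollectionEquivs}) there is $A\subseteq\A$, which we may take to contain $\ker(S)$, such that every $Y\in X$ has a witness $C\subseteq A$. Now let $Y\subseteq P(S)$ be equinumerous to some set disjoint from $A$. Then $Y\notin X$, because a witness $C\subseteq A$ would give $Y\not\preceq \A - A$. Hence $Y\preceq \A - C$ for every $C\subseteq\A$. Call such $Y$ \emph{good}; every piece $Y_\beta$ of a set $B \subseteq \A - A$ is good.

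Step 2 (building disjoint copies). Fix $B\subseteq \A - A$ with its pieces $Y_\beta$, $\beta<\alpha$. We want pairwise disjoint sets $D_{n}$, for $n<\omega$, each disjoint from $A$ and each having a subset equinumerous to $B$. The plan is a transfinite recursion along $\omega\cdot\alpha$ (ordered so that stage $(n,\beta)$ handles the $n$-th copy of $Y_\beta$). At each stage we pick a set of urelements disjoint from $A\cup B$ and from everything chosen so far, together with an injection of $Y_\beta$ onto it; goodness of $Y_\beta$ guarantees that such a pair exists at every stage. To avoid using choice, we first apply Collection to the set $\{(Y,C) : Y \text{ good}\}$, with $C$ ranging over the relevant sets. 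This produces a set $A^*$ such that suitable injections always exist in some $V_\delta(A^*)$. At each stage we then take the least rank $\delta$ at which a suitable pair appears.

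The main obstacle is exactly this choice step. Even after fixing the least rank, there may be many suitable pairs, and we must select one, because Dup$^+$ requires a subset of $D_n$ that is genuinely equinumerous to $B$, not merely a set containing copies of each $Y_\beta$. My first attempt would be to take the union $E$ of all images at least rank and handle this union uniformly, exploiting that SVC$^\A(S)$ lets us index everything by $P(S)\times$ ordinals. A fallback is to avoid the pieces altogether: run Step 1 over types $Y\subseteq P(S)\times\omega$ instead. I would then use goodness to get $Y\sim Y\sqcup Y'$ and apply the $f^{n+1}$ trick from Lemma \ref{Lemma:Tail+Col->Dup+}, which produces $\omega$ disjoint copies from a single injection and so needs no further choices. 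Once the copies exist, Dup$^+(A)$ follows.
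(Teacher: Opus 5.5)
Your Step~1 is correct and is essentially the paper's first use of Collection. Step~2, however, does not go through, and the gap is exactly the one you flag.

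Slicing $B$ into the columns $B_\beta$ ($\beta<\alpha$) commits you to a recursion of length $\omega\cdot\alpha$. At each stage you must pick a copy of a possibly non-well-orderable piece together with an injection onto it, and nothing makes this pick canonical. ``Least rank'' does not single out a pair. The sets to be avoided at stage $(n,\beta)$ are produced by the recursion itself, so the auxiliary Collection over pairs $(Y,C)$ is not well-posed. Taking the union of all candidates at least rank only postpones the problem, since you would still have to choose, for each $\beta<\alpha$, an injection of $B_\beta$ into that union.

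The fallback does not close the gap either. A set $B$ may need $P(S)\times\alpha$ for arbitrarily large $\alpha$, so types in $P(S)\times\omega$ cover it only if you cut it into $\alpha$-many pieces again. Moreover, goodness of $Y$ only yields a disjoint copy $Y'$, never the absorption $Y\sim Y\sqcup Y'$ (take $Y$ finite). In Lemma~\ref{Lemma:Tail+Col->Dup+} absorption came for free because $T$ is a tail; under SVC$^\A$ it has to be engineered.

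What is missing is the paper's mechanism, which uses only finitely many existential instantiations. There are three ingredients.

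\emph{The type space.} $P(S)$ alone is the wrong type space. With $S$ such that every set of urelements injects into some $S\times\alpha$ (your $P(S)$), call $P\subseteq S$ \emph{limited} if some $P\times\lambda$ is not equinumerous to a set of urelements, and let $\lambda_P$ be the least such $\lambda$. Put $\kappa=\max\{\omega,\sup_P\lambda_P\}$ and run your Step~1 over types $Q\subseteq S\times\kappa$.

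\emph{Enlarging $B$.} One does not duplicate $B$ itself. A second application of Collection enlarges $B$ to some $B^*\subseteq\A-A$ into which every $Q\subseteq S\times\kappa$ realized outside $A$ injects. Fix one bijection $B^*\sim Q\subseteq S\times\lambda$ and slice $Q$ by \emph{rows} $Q_s$ rather than columns. Each row is well-ordered, hence canonically identified with its order type $\alpha_s$.

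\emph{Copying and absorption.} The long rows ($\alpha_s\ge\kappa$) lie over a non-limited $S_1$. So the long part $Q_1$ can be copied outside $A\cup B^*$ inside a single $C\sim S_1\times\theta$ with $\theta>\max\{\aleph(A\cup B^*),\lambda\}$, using the unique order isomorphisms row by row. The short part $Q_0$ lies in $S\times\kappa$, is good, and is copied in one step. Finally $Q\times 2\preceq Q$: long rows are infinite ordinals, and doubling the short part by sending the second copy of $(s,\beta)$ to $(s,\alpha_s+\beta)$ stays inside $S\times\kappa$, so it is absorbed by $B^*$. This gives $B^*\cup C\sim C$ for the duplicate $C$, and only then does the $f^{n+1}$ trick produce $\omega$ pairwise disjoint copies.
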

\begin{proof}
Let $S$ be such that for every $A \subseteq \A$, $A \preceq S \times \alpha$ for some ordinal $\alpha$. A set is said to be \textit{realized} if it is equinumerous to a set of urelements; a subset $P$ of $S$ is \textit{limited} if $P \times \alpha$ is not realized for some ordinal $\alpha$. For every \textit{limited} $P\subseteq S$, let $\lambda_P$ be the least ordinal such that $P \times \lambda$ is not realized. Define
$$ \kappa^* = \sup \{\lambda_P \mid P \subseteq S \land P \text{ is limited} \}.$$
Let $\kappa= \max\{\omega, \kappa^*\}$. By applying Collection to the set $X = \{ Q \subseteq S \times \kappa\ \mid \exists B \subseteq \A (Q \not\preceq \A - B)\}$, we get a set of urelements $A$ such that for every $Q \in X$, $Q \not\preceq \A - A$.

We show that Dup$^+(A)$. Suppose $B \cap A = \emptyset$. Consider the set $Y = \{ Q \subseteq S \times \kappa\ \mid \exists C \subseteq \A - A (C \sim Q)\}$. Using Collection, we can get a set $B^*$  such that (i) $B \subseteq B^* \subseteq \A - A$ and (ii) every $Q \in Y$ injects into $B^*$. Then there is a bijection $F: B^* \to Q$ such that $Q \subseteq S \times \lambda$ for some ordinal $\lambda$. For each $s \in S$, we may identify $Q_s = \{\alpha < \lambda \mid \<s, \alpha> \in Q\}$ as its order type and hence as an ordinal $\alpha_s \leq \lambda$. Let $S_0 = \{s \in S \mid \alpha_s < \kappa\}$, and $S_1 = S - S_0$; let $Q_0 = Q \restriction S_0$, and $Q_1 = Q \restriction S_1$.

We first realize $Q_1$ outside $A \cup B^*$. Since $S_1 \times \kappa \subseteq Q_1$, $S_1$ cannot be a limited subset of $S$ and hence $S_1 \times \alpha$ is realized for every ordinal $\alpha$. Let $\theta$ be an initial ordinal greater than $\max\{\aleph(A \cup B^*), \lambda\}$ and $C \subseteq \A$ be such that $C \sim S_1 \times \theta$.

\begin{claim}
$(C - (A \cup B^*)) \sim C$.
\end{claim}
\begin{claimproof}
Fix some $R \subseteq S_1 \times \theta$ such that $R \sim C \cap (A \cup B^*)$. For every $s \in S_1$, $(\{s\} \times \theta) \cap R$ has size $< \theta$, so $(\{s\} \times \theta) - R$ has size $\theta$ and hence order type $\theta$. Using the unique isomorphism between $(\{s\} \times \theta) - R$ and $\{s\} \times \theta$ for every $s \in S_1$, we can construct a bijection from $(S_1 \times \theta) - R$ to $S_1 \times \theta$, which proves the claim.\end{claimproof}

\noindent Since $Q_1 \subseteq S_1 \times \lambda \subseteq S_1 \times \theta$, we can find some $C_1$ such that $C_1 \cap (A \cup B^*) = \emptyset$ and $C_1 \sim Q_1$. 

Next we realize $Q_0$ outside $A \cup B^* \cup C_1$. $Q_0 \subseteq S \times \kappa$ and $Q_0$ is realized outside $A$ since $Q_0 \preceq B^*$, so $Q_0 \preceq \A - B$ for every $B\subseteq \A$. In particular, $Q_0$ injects into $\A - (A \cup B^* \cup C_1)$. Thus, there is some $C_0$ disjoint from  $A \cup B^* \cup C_1$ such that $C_0 \sim Q_0$.

Now $C_0 \cup C_1$ is a duplicate of $B^*$ that is disjoint from $A$. As in the proof of Lemma \ref{Lemma:Tail+Col->Dup+}, it remains to show that $B^* \times 2 \sim B^*$, which implies $B^* \times \omega \sim B^*$. So it suffices to show that $Q \times 2 = (Q_0 \times 2) \cup (Q_1 \times 2)$ injects into $Q$. For every $s \in S_1$, $\alpha_s$ is an infinite ordinal so there is some injection $G$ that maps $\alpha_s \times 3$ into $\alpha_s$ for each $s \in S_1$. This induces an injection from $Q_1 \times 3$ to $Q_1$ and so $Q_1 \sim Q_1 \times 3$. So $$Q = Q_0 \cup Q_1 \sim Q_0 \cup (Q_1 \times 3) \sim Q_0 \cup Q_1 \cup (Q_1 \times 2) = Q \cup (Q_1 \times 2).$$
It is thus left to show $Q_0 \times 2$ injects into $Q$. For each $s \in S_0$, since $\alpha_s < \kappa$, $\alpha_s + \alpha_s < \kappa$. So the function $g : Q_0 \times 2 \to S \times \kappa$ defined as 
\begin{equation*}
g(s, \beta, i) = 
\begin{cases*}
\<s, \beta>   & $i = 0$ \\
\<s, \alpha_s + \beta> & i =1
\end{cases*}
\end{equation*}
is an injection. As $Q_0 \times 2$ injects into $B^* \cup C_0$, it follows that $g[Q_0 \times 2] \in Y$. Therefore, $g[Q_0 \times 2]$ injects into $B^*$ and hence $Q_0 \times 2$ injects into $Q$. This completes the proof.
\end{proof}

\begin{theorem}\label{thm:SVCA+Collection->RP}
SVC$^\A$ $\land$ Collection $\to$ RP.
\end{theorem}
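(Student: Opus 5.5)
The plan is to obtain RP from Lemma \ref{lemma:Uni+Dup+Col->RP}, which says that if cardinality is parametrically definable, then Dup$^+$ together with Collection yields RP. So, assuming SVC$^\A$ and Collection, I will verify the two hypotheses of that lemma separately and then apply it.

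First, cardinality is parametrically definable. By Proposition \ref{prop:SVC->Uni}, SVC$^\A$ implies Uni. The witness there is $\ker(S)$ for a set $S$ with SVC$^\A(S)$, and it is a universal set. Lemma \ref{lemma:uni->HP}(1) then gives a parametrically definable cardinality assignment, namely Scott's trick carried out inside $V(\ker(S))$, with $S$ as the parameter. Second, Dup$^+$ holds by Lemma \ref{lemma:SVC+Col->Dup+}, which applies directly because we are assuming both SVC$^\A$ and Collection.

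With both hypotheses in place, Lemma \ref{lemma:Uni+Dup+Col->RP} applied together with Collection gives RP.

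There is no substantive obstacle, since the real work is already done in Lemma \ref{lemma:SVC+Col->Dup+}. The one point to check is that the notion of ``parametrically definable'' used in Lemma \ref{lemma:Uni+Dup+Col->RP} allows a parameter such as $S$, or $\ker(S)$. RP is a scheme in which parameters are already permitted, so this should cause no difficulty.
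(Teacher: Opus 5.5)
Your proposal is correct and is essentially the paper's proof, which combines Proposition \ref{prop:SVC->Uni} with Lemma \ref{lemma:uni->HP} to make cardinality parametrically definable, Lemma \ref{lemma:SVC+Col->Dup+} to obtain Dup$^+$, and then Lemma \ref{lemma:Uni+Dup+Col->RP}. Your concern about parameters is also unproblematic, since the definition of parametric definability explicitly allows a parameter such as $\ker(S)$.
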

\begin{proof}
Combine Proposition \ref{prop:SVC->Uni}, Lemma \ref{lemma:uni->HP}, \ref{lemma:Uni+Dup+Col->RP} and \ref{lemma:SVC+Col->Dup+}.
\end{proof}
\noindent Our first main theorem then follows from Theorem \ref{thm:Tail+Collection->RP} and \ref{thm:SVCA+Collection->RP}. As a corollary, we show that the equivalence of the three forms of reflection holds under AC$^\A$.

\begin{lemma}\label{lemma:Uni+Dup+RP-->Col}
Uni $\land$ Dup $\land$ RP$^-$ $\to$ Collection.
\end{lemma}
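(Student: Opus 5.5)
We verify clause (2) of Proposition \ref{prop:CollectionEquivs}. Let $A_0$ be universal and $A_1$ satisfy Dup$(A_1)$. Fix $w, p$ with $\forall x \in w\, \exists y\, \varphi(x,y,p)$, and put $K = A_0 \cup A_1 \cup \ker(w) \cup \ker(p)$. Then $K$ is still universal, and Dup$(K)$ holds by Lemma \ref{lemma:dup(A)basics}(1). Every automorphism $\pi$ of $U$ induced by a permutation of urelements that pointwise fixes $K$ fixes $w$, $p$ and every $x \in w$. So whenever $\varphi(x,y,p)$ holds, so does $\varphi(x,\pi y,p)$. The aim is to find a set $D \subseteq \A - K$ such that every $x\in w$ has a witness in $V(K \cup D)$.

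\emph{Step 1: bound the types.} For $x \in w$, every witness $y$ has $B_y = \ker(y) - K$, and $B_y$ injects into $V(K)$ by universality, hence into some $V_\alpha(K)$. So $B_y$ is in bijection with some $z \in V(K)$. Let $\alpha_x$ be the least $\alpha$ such that some witness $y$ for $x$ has $B_y \sim z$ for some $z \in V_\alpha(K)$. This is definable, so by Replacement $\alpha = \sup_{x\in w}\alpha_x$ exists. Let
\[
Z = \{ z \in V_\alpha(K) \mid \exists x \in w\, \exists y\, (\varphi(x,y,p) \land \ker(y)-K \sim z)\},
\]
which is a set by Separation. Every $x\in w$ has a witness whose new urelements are equinumerous to some $z\in Z$.

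\emph{Step 2: realize all types simultaneously via RP$^-$.} The sentence $\forall z \in Z\, \exists B\, (B \subseteq \A - K \land B \sim z)$ holds. Applying RP$^-$ with parameters $Z,K$ gives a transitive $t$ in which it holds relativized. Being a set of urelements disjoint from $K$, and being a bijection between given sets, are $\Delta_0$ and hence absolute for transitive $t$. So $D = \bigcup\{B \in t \mid B \subseteq \A - K\}$ is a set, and every $z\in Z$ is equinumerous to some subset of $D$. This is exactly the place where RP$^-$ replaces the missing choice needed to pick the $B$'s.

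\emph{Step 3: move witnesses into $V(K\cup D)$.} Given $x$, take a witness $y$ with $B = \ker(y)-K \sim z\in Z$, and choose $B'\subseteq D$ with $B' \sim z$. The obstacle is that $B$ may overlap $B'$ or $D$, so a bijection $B\to B'$ need not extend to a permutation fixing $K$. I would handle this with a two-step swap using duplication. By Lemma \ref{lemma:dup(A)basics}(1), Dup$(K \cup D \cup B)$ holds, so there is $C \subseteq \A - (K\cup D\cup B)$ with $C \sim B$. Since $C$ and $B$ are disjoint, a bijection between them extends to an involution of $B\cup C$ fixing $K$. It sends $y$ to a witness $y'$ with $\ker(y') \subseteq K\cup C$. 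Since $C \cap D = \emptyset$, a bijection $C \to B'$ likewise extends to an involution fixing $K$. This sends $y'$ to a witness in $V(K \cup B') \subseteq V(K\cup D)$. Hence $K \cup D$ witnesses clause (2), and Collection follows by Proposition \ref{prop:CollectionEquivs}.
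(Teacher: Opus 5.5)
Your argument is essentially the paper's proof. The paper likewise enlarges to a universal set with duplication containing $\ker(w)\cup\ker(p)$, bounds the ranks of copies of the new kernels, uses RP$^-$ to realize all these copies outside the base set at once, and moves each witness by a two-step swap through a duplicate.

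There is one slip in Step 3. Dup$(K\cup D\cup B)$ only yields duplicates of sets disjoint from $K\cup D\cup B$, so it cannot be applied to $B$ itself. Instead, apply Dup$(K)$ to $B\cup D$ (the paper duplicates $B\cup B'$), and take $C$ to be the image of $B$ inside the resulting duplicate.
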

\begin{proof}
By Lemma \ref{lemma:dup(A)basics}, there is some universal set $B$ over which duplication holds. Assume that $\forall x \in w \exists y \varphi(x, y, p)$ for some $w$ and $p$. Let $A = B \cup \ker(w) \cup \ker(p)$. 

\begin{claim}
If $C \cup D \subseteq \A - A$ and $C \sim D$, then there is an automorphism $\pi$ such that $\pi$ pointwise fixes $A$ and $\pi C = D$.
\end{claim}
\begin{claimproof}
By Lemma \ref{lemma:dup(A)basics} again, we have Dup$(A)$ so we can first fix some duplicate $E$ of $C \cup D$ that is disjoint from $A$. Let $\pi_0$ swap $C$ with some subset of $E$, and let $\pi_1$ swap $\pi_0 C$ with $D$. $\pi_1\pi_0$ is thus a desired automorphism. \end{claimproof}

For every $x \in w$, define
$$\alpha_x = \min \{\alpha \mid \exists z \in V_\alpha (A) \exists y (\varphi(x, y, p) \land (\ker(y) - A) \sim z)\}.$$
Let $\gamma = \sup_{x \in w} \alpha_x$ and $\barz = \{ z \in V_\gamma(A) \mid z \preceq \A - A\}$. By applying RP$^-$ to $\forall z \in \barz \exists C \subseteq \A - A (C \sim z)$, we get a transitive set $t$ such that $\barz \in t$ and $\forall z \in \barz \exists C \in t (C \subseteq \A - A \land C \sim z)$. Let $A^+ = A \cup \bigcup \{C \in t \mid C \subseteq \A - A \}$. 

By Proposition \ref{prop:CollectionEquivs}, it is left to show that $\forall x \in w \exists y \in V(A^+) \varphi(x, y, p)$. Let $x \in w$. Then there is some $y'$ and $z \in \barz$ such that $\varphi(x, y', p)$ and $(\ker(y') - A)  \sim z$. Accordingly, there is some $C \subseteq A^+ - A$ with $C \sim (\ker(y') - A)$. By the claim above, there is an automorphism $\pi$ such that $\pi (\ker(y') - A) = C$ and $\pi$ pointwise fixes $A$. Therefore, $\varphi(x, \pi y', p)$ and $\pi y' \in V(A^+)$. \end{proof}

\begin{corollary}\label{cor:equivalenceunderACA}
The following are equivalent over $\ZFU$ + AC$^\A$.
\begin{enumerate}
\item RP
\item RP$^-$
\item Collection
\end{enumerate}
\end{corollary}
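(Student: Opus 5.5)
We prove the cycle RP $\to$ RP$^-$ $\to$ Collection $\to$ RP, all over $\ZFU$ + AC$^\A$.

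\emph{RP $\to$ RP$^-$.} Suppose $\varphi(x)$ holds. Apply RP to the set $\{x\}$ and the formula $\varphi$. This yields a transitive $t \supseteq \{x\}$ such that $\varphi(v) \leftrightarrow \varphi^t(v)$ for all $v \in t$. Since $x \in t$ and $\varphi(x)$ holds, we get $\varphi^t(x)$.

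\emph{Collection $\to$ RP.} We use Lemma \ref{lemma:Uni+Dup+Col->RP}, which needs two hypotheses.
\begin{enumerate}
\item Dup$^+$ follows from AC$^\A$ together with Collection, by Lemma \ref{lemma:dup(A)basics}(4).
\item Parametric definability of cardinality follows from Uni via Lemma \ref{lemma:uni->HP}.
\end{enumerate}
To get Uni, observe that AC$^\A$ implies SVC$^\A$ with $S = \{\emptyset\}$: every $A \subseteq \A$ is well-orderable, so $A \sim \alpha \sim S \times \alpha$ for some ordinal $\alpha$. Proposition \ref{prop:SVC->Uni} then gives Uni. Alternatively, Theorem \ref{thm:SVCA+Collection->RP} gives Collection $\to$ RP directly once SVC$^\A$ is known.

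\emph{RP$^-$ $\to$ Collection.} We apply Lemma \ref{lemma:Uni+Dup+RP-->Col}, which requires Uni, Dup and RP$^-$.
\begin{enumerate}
\item Uni holds under AC$^\A$, by the argument in the previous step.
\item Dup holds by Lemma \ref{lemma:dup(A)basics}(3).
\end{enumerate}
Together with RP$^-$, the lemma yields Collection.

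The only point needing care is checking that the hypotheses of the cited lemmas hold under AC$^\A$ alone, without Collection. This matters in the last step, where Collection is the conclusion and cannot be assumed. Uni comes from SVC$^\A$ with a trivial $S$, and Dup from Lemma \ref{lemma:dup(A)basics}(3); neither uses Collection. So no step presents a genuine obstacle.
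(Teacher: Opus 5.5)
Your proposal is correct and matches the paper's proof. The paper also observes that under AC$^\A$ the empty set is universal and Dup holds, giving RP$^- \to$ Collection via Lemma \ref{lemma:Uni+Dup+RP-->Col}. It gets Collection $\to$ RP from SVC$^\A(\{\emptyset\})$ and Theorem \ref{thm:SVCA+Collection->RP}. Your main route for that step, through Lemma \ref{lemma:dup(A)basics}(4) and Lemma \ref{lemma:Uni+Dup+Col->RP}, is an equally valid shortcut, since both routes pass through Lemma \ref{lemma:Uni+Dup+Col->RP}.
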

\begin{proof}
Assume AC$^\A$. Then $\emptyset$ is universal and Dup holds by Lemma \ref{lemma:dup(A)basics}, so (2) $\to$ (3) by Lemma \ref{lemma:Uni+Dup+RP-->Col}. SVC$^\A(\{\emptyset\})$ so it follows from Theorem \ref{thm:SVCA+Collection->RP} that  (3) $\to$ (1).
\end{proof}

\section{Independence results}\label{section:independence}
We first consider an intermediate reflection principle, which we call RP$^\sim$. A set is \textit{supertransitive} if it is transitive and contains every subset of its members as a member.
\begin{itemize}
\item [] (RP$^\sim$) $\forall x (\varphi (x) \rightarrow \exists t (x \in t \ \land \ t \text{ is supertransitive} \land \ \varphi^t(x)))$
\end{itemize}

\begin{prop}\label{prop:RP->powerRP-andCollection}
RP $\to$ Collection $\land$ RP$^\sim$.
\end{prop}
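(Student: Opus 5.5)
We prove the two conjuncts separately. In both cases we reflect a formula that is chosen so that the resulting transitive set $t$ is itself the witness we need.

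For Collection, suppose $\forall x \in w \exists y \varphi(x,y,p)$. Apply RP to the formula $\exists y\, \varphi(x,y,p)$ together with the subformula $\varphi(x,y,p)$; one instance of RP for the conjunction handles both, or we may code them as a single formula with a case parameter. We take the set to be covered to be $\{w,p\} \cup w$. This yields a transitive set $t$ with $w, p \in t$ and $w \subseteq t$ such that both formulas are absolute between $t$ and $U$ for arguments in $t$. Now fix $x \in w$. Then $\exists y\, \varphi(x,y,p)$ holds, so $(\exists y\, \varphi(x,y,p))^t$ holds by absoluteness. Hence there is $y \in t$ with $\varphi^t(x,y,p)$, and by absoluteness of $\varphi$ on $t$ we get $\varphi(x,y,p)$. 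So $v = t$ is a collection set. The only care needed is that RP, as stated, reflects a single formula $\varphi(v)$ with $v$ ranging over $t$. To reflect the subformulas simultaneously we use the usual trick: reflecting a single formula with parameters for free variables via tuples, we apply RP to the formula $\psi(u)$ defined as "$u=\langle 0,x,p\rangle \wedge \exists y \varphi(x,y,p)$, or $u = \langle 1,x,y,p\rangle \wedge \varphi(x,y,p)$". We also need $t$ to be closed enough to contain the relevant tuples, or else we rephrase with multiple variables. Since RP quantifies $\forall v \in t$ with $\varphi(v)$ possibly having several free variables, I will read it as applying to all tuples of elements of $t$. This bookkeeping is the only mild obstacle.

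For RP$^\sim$, suppose $\varphi(x)$ holds. The difficulty is to obtain a supertransitive $t$. We apply RP to the formula $\psi(u)$ given by "$u$ is a set and $P(u)$ exists", together with $\varphi$. More usefully, we reflect the formula $\theta(u,z) \equiv z = P(u)$, that is, $\forall s (s \subseteq u \leftrightarrow s \in z)$, along with $\exists z\, \theta(u,z)$ and $\varphi$, taking $x \subseteq t$ via $\{x\}$. Then for any set $u \in t$, Powerset gives $\exists z\, \theta(u,z)$, so by absoluteness there is $z \in t$ with $\theta^t(u,z)$, and hence $\theta(u,z)$ holds in $U$. So $z = P(u) \in t$, and by transitivity every subset of $u$ is in $t$. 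Therefore $t$ is supertransitive. Since $x \in t$ and $\varphi(x)$ holds, absoluteness gives $\varphi^t(x)$.

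The main obstacle is the same in both parts: justifying that RP can reflect finitely many formulas at once, with multiple free variables. I would handle it by combining the formulas into one using a disjunction indexed by distinct natural-number tags, and noting that transitive $t \supseteq \omega$ can be arranged, which holds because $\omega \subseteq t$ when we include $\omega$ in the covered set. The pairing of tuples is avoided if RP is read with a finite list of variables, which is the standard reading.
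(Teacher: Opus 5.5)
Your proof is correct and is essentially the paper's. For Collection you reflect the instance formula together with $\varphi$ itself, so that $t$ becomes a collection set. For RP$^\sim$ you reflect ``$z = P(u)$'' together with Powerset, so that $t$ contains the true power set of each of its members. The combining device is also the same: a disjunction indexed by natural-number tags, with the tags placed in $t$ by covering.

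One correction: drop the parenthetical claim that reflecting the conjunction ``handles both''. Reflecting $\varphi_0 \land \varphi_1$ does not yield absoluteness of each conjunct separately. In particular it does not give the upward step from $\varphi^t(x,y,p)$ to $\varphi(x,y,p)$ that your Collection argument needs. So the tagged disjunction, which you ultimately commit to, is the right tool.
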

\begin{proof}
Assume RP. First note that for any two formulas $\varphi_0$ and $\varphi_1$, we can apply RP to the formula $(v=0 \land \varphi_0) \lor (v=1 \land \varphi_1)$, where $v$ is a new variable, to get a transitive set $t$ such that $\varphi_0^t \leftrightarrow \varphi_0$ and $\varphi_1^t \leftrightarrow \varphi_1$.

For Collection, assume $\forall x \in w \exists y \varphi(x, y, p)$.  Let $t$ be a transitive set extending $\{w, p\}$ which simultaneously reflects the formula $\forall x \in v_0 \exists y \varphi (x, y, v_1)$ and the formula $\varphi(v_0, v_1, v_2)$. It follows that $t$ is a collection set. For RP$^\sim$, suppose that $\varphi (x)$ holds. Let $t$ be a transitive set containing $x$ which simultaneously reflects the formula $\varphi (v_0) \land \text{Powerset}$ and the formula $v_1 = P(v_2)$. Then $t$ is a supertransitive set such that $\varphi^t(x)$ holds.\end{proof}

\noindent We proceed to prove the following independence results, thereby proving the second main theorem and answering Question 2.3 in \cite{YaoAxiomandForcing} negatively.

\begin{enumerate}

\item ZFU + Collection $\nvdash$ RP$^-$

\item ZFU + Collection $+$ RP$^-$ $\nvdash$ RP$^\sim$

\item ZFU + RP$^\sim$ + SVC + Tail $\nvdash$  Collection

\end{enumerate}

\subsection{Permutation models with small kernels}
We will utilize the technique of permutation models, which is the standard way of producing non-well-orderable sets of urelements. To create a proper class of sets as such, we go to an inner model which we call ``a small-kernel model''. This method has appeared in many places, including \cite{levy1969definability}, \cite{blassInjec1979}, \cite{FinitenessSVC} and the previous work of the second author.  However, the first author independently observed that these consistency results can be obtained via relative constructibility inside a model of ZFU in which every set is equinumerous to a set of urelements. 
\begin{definition}\label{permutationmodeldef}
Let $\G$ be a group of permutations of some set of urelements $A$.
\begin{enumerate}
\item For every $x$, $\sym_\G (x) = \{\pi \in \G \mid \pi x = x\}$; $\fix_G (x) = \{\pi \in \G \mid \pi y = y \text{ for all } y\in x \}$.

\item A \textit{normal filter} $\F$ on $\G$ is a nonempty set of subgroups of $\G$ which contains $\sym_\G (a)$ for every urelement $a$ and is closed under supergroup, finite intersection, and conjugation (i.e., for all $\pi \in \G$ and $H \in \F$, $\pi H \pi^{-1} \in \F$).

\item  An object $x$ is $\F$-\textit{symmetric} if $\sym_\G (x) \in \F$ and hereditarily $\F$-symmetric if every $y \in \tc(\{x\})$ is $\F$-symmetric. The \textit{permutation model} generated by $\F$ is the class $W = \{x \mid x \text{ is hereditarily } \F\text{-symmetric}\}.$

\item  An ideal $\I$ on $\A$ is $\G$-\textit{flexible} if (i) $\I$ is closed under subset and finite union, (ii) $\sym_\G(\I) = \G$ and (iii) whenever $A \cup \{a\} \in \I$ and $a \notin A$, there is a $\pi \in \fix_\G(A)$ with $\pi a \neq a$.

\item The \textit{small-kernel permutation model} generated by $\F$ and $\I$ is the class $$M = \{ x \mid x \text{ is hereditarily } \F\text{-symmetric } \land \ker(x) \in \I\}.$$

\end{enumerate}
\end{definition}
\noindent It is a standard result that $\G \subseteq \Aut (W)$ and $W \models \ZFU$. And it follows from the definition that $\G \subseteq \Aut(M)$.

\begin{theorem}\label{thm:FThmofSKPM}
Let $\G, \F$ and $\I$ be as above, and $M$ be the resulted small-kernel permutation model.
\begin{enumerate}
\item $M \models \ZFU$.
\item Assume AC in $U$. $M \models $ SVC if $M \models \text{ There is a universal set}$.

\end{enumerate}
 \end{theorem}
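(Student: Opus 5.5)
For (1), I will use the standard result that the full permutation model $W$ satisfies $\ZFU$ and that $\G \subseteq \Aut(W)$. The plan is to check the axioms in $M$ directly, using two facts. First, $M \subseteq W$ and $M$ is transitive: if $x \in M$ and $y \in x$, then $\tc(\{y\}) \subseteq \tc(\{x\})$, so $\ker(y) \subseteq \ker(x) \in \I$. Second, $M$ is closed under every $\pi \in \G$, because $\sym_\G(\I) = \G$ gives $\ker(\pi x) = \pi \ker(x) \in \I$.

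With these in hand, the axioms go as follows.
\begin{itemize}
\item Extensionality, Foundation, Infinity (pure sets all lie in $M$) and the urelement axiom are immediate.
\item Pairing and Union hold because $\I$ is closed under finite unions and subsets: $\ker(\{x,y\}) = \ker(x) \cup \ker(y)$ and $\ker(\bigcup x) \subseteq \ker(x)$.
\end{itemize}

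The real content is Separation, Powerset and Replacement. Here the key lemma is a support property. Call $E \in \I$ a \emph{support} of $x$ if $\fix_\G(E) \subseteq \sym_\G(x)$. I want to show that every $x \in M$ has support $\ker(x)$ in the sense that $\fix_\G(\ker(x)) \subseteq \sym_\G(x)$; this holds because $\pi$ fixes $x$ whenever it pointwise fixes $\ker(x)$. Conversely, I want: if $x \subseteq M$ is $\F$-symmetric and $\fix_\G(E) \subseteq \sym_\G(x)$ for some $E \in \I$, then $\ker(x) \subseteq E$, so $x \in M$.

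This converse is exactly where $\G$-flexibility (iii) is used. Suppose $a \in \ker(y)$ for some $y \in x$ with $a \notin E$. Choose $E' = E \cup \ker(y) - \{a\}$. Then $E' \cup \{a\} \in \I$, so flexibility gives $\pi \in \fix_\G(E')$ with $\pi a \neq a$. Iterating such $\pi$, the images of $y$ are all elements of $x$, but they should move $a$ to infinitely many places, or at least to urelements outside $E$. Together with the fact that $\ker(x) = \bigcup_{y \in x} \ker(y)$, this should produce a contradiction with $\ker(x)\in\I$; more precisely, it shows that $\ker(x)$ is in $\I$ only if it is contained in a fixed support. I expect that the cleanest route is to define $M$-elements as symmetric sets all of whose members have kernels inside a single $\I$-set, and to prove $\ker(x)\in\I$ from that.

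Given this, the remaining axioms follow.
\begin{itemize}
\item For Separation and Replacement, I relativize to $M$ and take the image or subset computed in $U$. The resulting set is fixed by $\fix_\G(\ker(\text{parameters}))$, because $\pi$ preserves truth in $M$, so the criterion applies.
\item For Powerset, $P(x)\cap M$ has every element's kernel inside $\ker(x)$, so it lies in $M$.
\end{itemize}
The main obstacle is getting the kernel bound for Replacement, where images need not have kernels inside the parameters' kernel a priori. I would handle this by applying Replacement in $U$ to choose, for each $u$, the image $F(u)$. Each $F(u)$ is definable from $u$ and the parameters, so by flexibility (iii) its kernel must lie in $\ker(u) \cup \ker(p)$: otherwise some $\pi$ fixing those urelements moves an urelement of $F(u)$ while fixing $F(u)$, which cannot happen since $F(u)$ is uniquely defined.

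For (2), assume AC in $U$ and let $A$ be universal in $M$. I take $S = V_\omega(A)^M$, or more simply a suitable set built from $A$. Given $x \in M$, let $E = \ker(x) \cup A$. By universality, $\ker(x)$ injects into $V(A)^M$, and as in Lemma \ref{lemma:uni->HP} so does $\tc(x)$; hence $x$ injects into some $V_\alpha(A)^M$. It therefore suffices to show SVC for each $V_\alpha(A)$ uniformly. In $U$, well-order $V_\alpha(A)$; then $\fix_\G(A)$-orbits induce a surjection from $P(A)^{<\omega}$-indexed data times an ordinal. Concretely, using a $U$-well-order $\prec$ of $V_\alpha(A)$, I map $\langle \pi\text{-class}, \beta\rangle$ to elements, where elements with support $A$ are fixed by $\fix_\G(A)$. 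So every $y \in V_\alpha(A)^M$ is fixed by $\fix_\G(A)$, and the enumeration $\beta \mapsto y_\beta$ of $V_\alpha(A)^M$ in $U$ is $\fix_\G(A)$-invariant. Its symmetry is therefore witnessed by $\fix_\G(A) \in \F$, provided $\F$ contains $\fix_\G$ of $\I$-sets, which follows from normality and closure under finite intersection when $A$ is finite. In general I would take $S = \{\pi\restriction A \mid \pi\in\G\}$-orbit data in $M$ to absorb the non-fixing permutations; this last bookkeeping is the delicate step.
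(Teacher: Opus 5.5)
\paragraph{The gap is in (2).} Your reduction to sets $x\in V^M(A)$ is fine. However, the only case you actually treat is $\fix_\G(A)\in\F$ (e.g.\ $A$ finite). In that case a $U$-well-ordering of each $V_\alpha(A)^M$ already lies in $M$, so $M$ satisfies full AC and SVC is trivial. This misses the case the theorem exists for. The paper applies (2) to $\Mtail$, which fails AC, so by your own argument $\fix_\G(A)\notin\F$ for every universal $A$ there.

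For the general case you only gesture at ``$\{\pi\restriction A\}$-orbit data'', and that encoding does not work. For $\sigma\in\sym_\G(A)$ one has $\sigma(\pi\restriction A)=(\sigma\pi\sigma^{-1})\restriction A$. So $\sigma$ acts on the index by conjugation but on $\pi y_\eta$ by left translation. Consequently the relation $\langle \pi\restriction A,\eta\rangle\mapsto\pi y_\eta$ is invariant only under $\bigcap_\eta\sym_\G(y_\eta)$, which is not in $\F$ in general. Moreover, such restrictions need not lie in $M$ at all.

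The missing idea is to index by elements of $M$ whose stabilizers are exactly the groups that occur. Using AC in $U$, for each $H\in\F$ of the form $\sym_\G(y)$ with $y\in V^M(A)$, choose $x_H\in V^M(A)$ with $\sym_\G(x_H)=H$. Let $S=\{\pi x_H\mid \pi\in\G,\ \ker(\pi x_H)\subseteq A\}$; it is $\sym_\G(A)$-invariant with kernel in $A$, hence in $M$. Now take $x=\{y_\eta\mid\eta<\alpha\}$ enumerated in $U$, and consider the relation $\langle \pi x_{\sym_\G(y_\eta)},\eta\rangle\mapsto\pi y_\eta$, over $\pi\in\G$ keeping all kernels inside $A$.
\begin{itemize}
\item It is a function precisely because $\sym_\G(x_{\sym_\G(y_\eta)})=\sym_\G(y_\eta)$.
\item It is $\sym_\G(A)$-invariant because $\G$ now acts on both coordinates by left translation.
\item Its range contains $x$ (take $\pi=\mathrm{id}$).
\end{itemize}
Restricting to the preimage of $x$ then gives $x\preceq^* S\times\alpha$ in $M$. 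This is the paper's argument, and your proposal has no substitute for it.

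\paragraph{Corrections to (1).} Your final Replacement argument (uniqueness of $F(u)$ plus flexibility) is the paper's. Three points around it need fixing.
\begin{itemize}
\item The general ``converse'' criterion you state first is false. In $\Mcol$, $x=\A$ is $\G$-invariant, $x\subseteq\Mcol$ and $\fix_\G(\emptyset)\subseteq\sym_\G(x)$, yet $\ker(x)=\A\notin\I$. The kernel bound genuinely needs uniqueness.
\item Invariance under $\fix_\G(\ker(p))$ does not yield $\F$-symmetry, since $\fix_\G(E)$ for infinite $E\in\I$ is typically not in $\F$ (e.g.\ $\fix_\G(A_\omega)\notin\F$ in $\Mcol$). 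Use $\sym_\G(w)\cap\sym_\G(p)\subseteq\sym_\G(v)$ instead.
\item Apply flexibility to $\ker(u)\cup\ker(p)\cup(\ker(F(u))\setminus\{a\})$. This forces $\pi a\notin\ker(F(u))$ and therefore $\pi F(u)\neq F(u)$. Merely having $\pi a\neq a$ is compatible with $\pi F(u)=F(u)$.
\end{itemize}
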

\begin{proof}
(1) $M$ is transitive and contains all pure sets, so Extensionality, Foundation, and Infinity all hold in $M$. It is also routine to verify Pairing, Union, and Powerset all hold in $M$ by the standard argument for permutation models, since these axioms only generate sets whose kernel is small in the sense of $\I$ from sets in $M$.

It remains to show that $M$ satisfies the following form of Replacement, which implies Separation.
$$\forall x, p [\forall x \in w \exists ! y\varphi (x, y, p) \to \exists v \forall y (y \in v \leftrightarrow \exists x \in w \varphi(x, y, p))].$$
Suppose $w, p \in M$ and $M \models \forall x \in w \exists ! y \varphi (x, y, p)$. Let $v = \{ y\in M \mid \exists x \in w \  \varphi(x, y, p)\}.$ Since every $\pi \in G$ is an automorphism of $M$, it follows that $\sym(w) \cup \sym(p) \subseteq \sym(v)$ (we shall omit the subscript $\G$ for this theorem). It remains to show that $\ker(v) \subseteq \ker(w) \cup \ker(p)$. Suppose $a \in \ker(v) -(\ker(w) \cup \ker(p)).$ So $a \in \ker(y)$ and $M \models \varphi (x, y, p)$ for some $y \in v$ and $x \in w$. Since $\I$ is $\G$-flexible, it follows that there is some $\pi \in \fix(\ker(w) \cup \ker(p))$ such that $\pi a \notin \ker(y)$. Then $M \models \varphi(x, \pi y, p)$ and $\pi y \neq y$, which contradicts the assumption.

(2) Let $A$ be a universal set in $M$. Then every set in $M$ injects into $V^M(A)$ by the proof of Lemma \ref{lemma:uni->HP}. So to show SVC holds in $M$, it suffices to find an $S \in V^M(A)$ such that for every $x \in V^M(A)$ there is an ordinal $\alpha$ and a surjection $f \in M$ from $S \times \alpha$ onto $x$. In $U$, let
$\F' = \{H \in \F \mid \exists x \in V^M(A) \ \sym(x) = H \}.$ By AC in $U$, for each $H \in \F'$ we can choose an $x_H \in V^M(A)$ with $\sym(x_H) = H$. Define $$S = \{\pi x_H \mid \pi \in \G ,  H \in \F' , \text{ and } \ker(\pi x_H) \subseteq A\}.$$ If $\sigma \in \sym (A)$ and $\pi x_H \in S$, then $\ker(\sigma \pi x_H) \subseteq \sigma A = A$ so $\sigma \pi x_H \in S$. Thus $\sym(A) \subseteq \sym(S)$, and it follows that $S \in V^M(A)$.

Let $x \in V^M(A)$. In $U$, enumerate $x$ as $\{y_\eta \mid \eta < \alpha\}$ by some ordinal $\alpha$ and define
$$f = \{\<\pi x_{\sym(y_\eta)}, \eta, \pi y_\eta> \mid \pi \in \G, \eta < \alpha, \text{ and } \ker(\pi x_{\sym(y_\eta)} \cup \pi y_\eta) \subseteq A\}.$$
\noindent It follows that $\sym(A) \subseteq \sym(f)$, $dom(f) \subseteq S \times \alpha$ and $\ker(f) \subseteq A$; so $f \in M$. $f$ is a function: suppose $\pi x_{\sym(y_\eta)} = \pi' x_{\sym(y_\eta)}$; then $\pi^{-1} \pi' \in \sym(y_\eta)$ and so $\pi y_\eta = \pi' y_\eta$. $f$ is onto $x$: if $y_\eta \in x$, then $x_{\sym(y_\eta)} \in S$ and so $f(x_{\sym(y_\eta)}, \eta) = y_\eta$.
\end{proof}

\noindent The following definition will be useful for verifying Collection.
\begin{definition}\label{def:TailProperty}
Let $M$ be an inner model of $\ZFU$. $M$ has \textit{the tail property} if for every $A \subseteq \A$ in $M$ there is some $T\subseteq \A - A$ in $M$ such that for every $B \subseteq \A - A$ in $M$, there is some $\pi \in \Aut(M)$ such that $\pi$ pointwise fixes $A$ and $\pi B \subseteq T$.
\end{definition}
\begin{lemma}\label{Lemma:TailProperty->Col}
Let $M$ be an inner model of $\ZFU$ with the tail property. Then $M \models$ Collection.
\end{lemma}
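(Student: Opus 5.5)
We verify Collection inside $M$ through the criterion of Proposition \ref{prop:CollectionEquivs}(2), which $M$ may use because it is a model of $\ZFU$. So fix $w, p \in M$ with $M \models \forall x \in w \ \exists y\, \varphi(x, y, p)$. The goal is to produce a set of urelements $A^+ \in M$ such that for every $x \in w$ there is some $y \in V^M(A^+)$ with $M \models \varphi(x, y, p)$.

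First, let $A = \ker(w) \cup \ker(p)$. This set is in $M$, since $M$ is an inner model of $\ZFU$ and kernels are computed absolutely. By the tail property, choose $T \in M$ with $T \subseteq \A - A$ such that every $B \subseteq \A - A$ in $M$ can be moved into $T$ by some $\pi \in \Aut(M)$ that pointwise fixes $A$. Put $A^+ = A \cup T \in M$.

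Next, let $x \in w$ and pick $y \in M$ with $M \models \varphi(x, y, p)$. Set $B = \ker(y) - A$; this is a set of urelements in $M$ disjoint from $A$. The tail property gives $\pi \in \Aut(M)$ with $\pi \restriction A = \mathrm{id}$ and $\pi B \subseteq T$. We need that $\pi$ fixes $x$ and $p$. Since $\ker(w) \subseteq A$, we have $\ker(x) \subseteq \ker(w) \subseteq A$ for $x \in w$. So $\pi$ pointwise fixes $\ker(x)$ and $\ker(p)$, and therefore $\pi x = x$ and $\pi p = p$, by the fact (noted in the preliminaries) that an automorphism fixing the kernel of a set fixes the set.

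The step that needs care is this one: $\pi$ is only assumed to be an automorphism of $M$, not one of the canonical extensions from the preliminaries. So we check by $\in$-induction on $\tc(\{z\})$ that any $\in$- and $\A$-preserving automorphism of $M$ that pointwise fixes $\ker(z)$ fixes $z$. Suppose every $u \in z$ is fixed. Then $\pi z \supseteq \{\pi u \mid u \in z\} = z$, and if $v \in \pi z$ then $\pi^{-1}v \in z$, so $\pi^{-1} v$ is fixed and $v \in z$; extensionality for sets gives $\pi z = z$. Likewise, $\ker(\pi y) = \pi \ker(y) \subseteq A \cup \pi B \subseteq A^+$, because automorphisms preserve $\A$ and transitive closures.

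Finally, since $\pi$ is an automorphism of $M$, $M \models \varphi(\pi x, \pi y, \pi p)$, that is, $M \models \varphi(x, \pi y, p)$, and $\pi y \in V^M(A^+)$. Hence $M \models \forall x \in w \ \exists y \in V(A^+)\, \varphi(x, y, p)$, and Proposition \ref{prop:CollectionEquivs} applied inside $M$ gives $M \models$ Collection.
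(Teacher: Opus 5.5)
Your proof is correct and follows the paper's argument: take $A=\ker(w)\cup\ker(p)$, use the tail property to move $\ker(y)-A$ into $T$ by an automorphism that pointwise fixes $A$, and conclude via Proposition~\ref{prop:CollectionEquivs} with $V(A\cup T)$. Your $\in$-induction showing that an arbitrary automorphism of $M$ fixing $\ker(x)$ and $\ker(p)$ fixes $x$ and $p$, and your check that $\ker(\pi y)=\pi\ker(y)$, spell out steps the paper leaves implicit.
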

\begin{proof}
Suppose that $M \models \forall x \in w \exists y \varphi(x, y, p)$ for some $w, p \in M$. Let $A = \ker(w) \cup \ker(p)$ and $T \subseteq \A - A$ witness the tail property for $A$. Let $x \in w$ and $y$ be such that $\varphi^M(x, y, p)$. Then there is some $\pi \in \Aut(M)$ such that $\pi \ker(y) \subseteq A \cup T$ and $\pi$ pointwise fixes $A$. So $M \models \varphi(x, \pi y, p)  \land \pi y \in V(A \cup T)$. Thus, $M \models \forall x \in w \exists y \in V(A \cup T) \varphi(x, y, p)$, which suffices for Collection by Proposition \ref{prop:CollectionEquivs}.\end{proof}

\subsection{Collection $\not\to$ RP$^-$}

We work in some $U \models \ZFCU$ in which $\A = \{ a_\alpha \mid \alpha < \omega^2 \}$. For every $\alpha < \omega^2$, let $A_\alpha = \{a_\beta \mid \beta < \alpha\}$.
Define
\begin{enumerate}
\item $\I = \{ E \subseteq \A \mid \exists \alpha < \omega^2 (E \subseteq A_\alpha)\}.$

\item A permutation $\pi$ of $\A$ is \textit{permissible} if $\pi \I = \I$.

\item $\G = \{\pi \mid \pi \text{ and } \pi^{-1} \text{ are permissible permutations of } \A \}$.

\item $\F = \{ H \subseteq \G \mid \exists E \in \I \exists S \in  [P(E)]^{<\omega} (\fix_\G (S) \subseteq H) \}.$

\item $\Mcol = \{x \mid x \text{ is $\F$-hereditarily symmetric} \land \ker (x) \in \I \}.$

\end{enumerate}

\begin{prop}
 $\F$ is a normal filter on $\G$ and $\I$ is $\G$-flexible. \qed
\end{prop}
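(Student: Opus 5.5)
We verify the two halves of the statement directly from the definitions, writing $\fix(S)$ for $\fix_\G(S)$ throughout.

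\emph{$\F$ is a normal filter on $\G$.} First, $\G$ is a group. If $\pi,\sigma$ preserve $\I$, then so do $\pi\sigma$ and $\pi^{-1}$, so $\G$ is closed under composition and inverses. Second, $\F$ is nonempty, since $\G=\fix(\emptyset)\in\F$. Closure under supergroups is immediate from the form of the definition. For finite intersections, suppose $\fix(S_1)\subseteq H_1$ and $\fix(S_2)\subseteq H_2$, where each $S_i$ is a finite set of subsets of some $E_i\in\I$. Then $E_1\cup E_2\in\I$, because $\I$ is closed under finite unions (any two $A_\alpha$'s are nested). Moreover $\fix(S_1\cup S_2)\subseteq H_1\cap H_2$. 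For an urelement $a$, the group $\sym(a)$ equals $\fix(\{\{a\}\})$, and $\{a\}\subseteq A_{\alpha+1}$ for the relevant index $\alpha$, so $\sym(a)\in\F$. For conjugation, take $\pi\in\G$ and note that
\[\pi\fix(S)\pi^{-1}=\fix(\pi S).\]
Here $\pi S$ is a finite set of subsets of $\pi E$, and $\pi E\in\I$ because $\pi$ is permissible. Hence $\pi H\pi^{-1}\supseteq\fix(\pi S)$ lies in $\F$.

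\emph{$\I$ is $\G$-flexible.} Clause (i) holds because $\I$ is an initial-segment ideal: it is closed under subsets and under finite unions, as above. Clause (ii) says $\pi\I=\I$ for every $\pi\in\G$, which is exactly permissibility. For clause (iii), suppose $A\cup\{a\}\in\I$ with $a\notin A$, and choose $\alpha<\omega^2$ with $A\cup\{a\}\subseteq A_\alpha$. Pick any $b\notin A_\alpha$, for instance $b=a_\alpha$, and let $\pi$ be the transposition $(a\ b)$.
\begin{enumerate}
\item $\pi$ fixes every element of $A$, since $a,b\notin A$.
\item $\pi$ moves only finitely many urelements, so $\pi$ maps each $A_\beta$ into some $A_\gamma$, and likewise for $\pi^{-1}=\pi$. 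Thus $\pi$ and $\pi^{-1}$ are permissible and $\pi\in\G$.
\item $\pi a=b\neq a$.
\end{enumerate}

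There is a subtlety in requiring $\pi\in\fix_\G(A)$. As written, $\fix$ fixes the members of $A$, which are urelements, and our transposition does this. If instead one reads this requirement through the filter generators, i.e.\ fixing finitely many subsets of some $E\in\I$, it is equally harmless: a finitary permutation supported outside $A\cup\{a\}$, together with $a$ itself, can be chosen to preserve any finitely many prescribed subsets. No step here is a real obstacle. The only point needing care is that the permutations used in (iii) lie in $\G$, and finite support settles that immediately.
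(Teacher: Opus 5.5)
Your verification is correct and is the routine check the paper leaves implicit here. It matches the paper's own argument for the analogous proposition in the $\Mrp$ section: invariance of $\I$ under $\G$ gives normality via $\pi\fix_\G(S)\pi^{-1}=\fix_\G(\pi S)$, and transpositions lie in $\G$, which gives flexibility. Your closing ``subtlety'' paragraph is unnecessary, since $\fix_\G(A)$ unambiguously means fixing $A$ pointwise, and it is best dropped: read literally (arbitrary prescribed subsets of some $E\in\I$), its claim fails if $\{a\}$ is among the prescribed subsets.
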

\noindent It follows from Theorem \ref{thm:FThmofSKPM} that $\Mcol\models \ZFU$.

\begin{lemma}
$\Mcol \models$ Collection.
\end{lemma}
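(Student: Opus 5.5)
The plan is to verify the hypothesis of Lemma \ref{Lemma:TailProperty->Col}, i.e.\ to show that $\Mcol$ has the tail property; Collection in $\Mcol$ then follows at once. The key structural facts are that every member of $\I$ is countable and bounded below some $\omega\cdot n$. In addition, any permutation of $\A$ that moves only urelements inside some $A_{\omega\cdot k}$ (and fixes everything above) lies in $\G$, since it and its inverse clearly preserve $\I$. Finally, $\G \subseteq \Aut(\Mcol)$, as noted after Definition \ref{permutationmodeldef}.

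Fix $A \subseteq \A$ in $\Mcol$. Since $\ker(A) = A \in \I$, there is some $n<\omega$ with $A \subseteq A_{\omega\cdot(n+1)}$. I will take
$$T = A_{\omega\cdot(n+2)} - A.$$
I first check that $T \in \Mcol$. Its kernel is $T \subseteq A_{\omega\cdot(n+2)}$, so $\ker(T) \in \I$. For symmetry, $T \subseteq A_{\omega\cdot(n+2)} \in \I$, so $\fix_\G(\{T\}) \in \F$, and $\fix_\G(\{T\}) \subseteq \sym_\G(T)$. Each urelement is symmetric, so $T$ is hereditarily symmetric. Clearly $T \subseteq \A - A$.

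Now let $B \subseteq \A - A$ with $B \in \Mcol$. Then $B \subseteq A_{\omega\cdot m}$ for some $m > n+1$, and I enlarge $m$ by one so that $A_{\omega\cdot m} - (A_{\omega\cdot(n+1)} \cup B)$ is infinite. Write $B = B_0 \cup B_1$ with $B_0 = B \cap A_{\omega\cdot(n+1)}$ and $B_1 = B - A_{\omega\cdot(n+1)}$. Note that $B_0 \subseteq T$ already. I will build a permutation $\pi$ of $\A$ as follows. It is the identity on $A_{\omega\cdot(n+1)}$ and on $\A - A_{\omega\cdot m}$. On the countably infinite set $D = A_{\omega\cdot m} - A_{\omega\cdot(n+1)}$, it is a bijection of $D$ onto itself sending $B_1$ into the even-indexed elements of the block $\{a_\beta \mid \omega\cdot(n+1) \le \beta < \omega\cdot(n+2)\}$.

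Such a bijection exists by a back-and-forth or direct matching. Both $D - B_1$ and $D - (\text{image of } B_1)$ are countably infinite: the first by the choice of $m$, the second because it contains the odd-indexed elements of the block. So the partial injection on $B_1$ extends to a bijection of $D$. Then $\pi \in \G$ because it moves only urelements in $A_{\omega\cdot m}$, $\pi$ pointwise fixes $A$, and $\pi B = B_0 \cup \pi B_1 \subseteq T$. This gives the tail property, and Lemma \ref{Lemma:TailProperty->Col} yields Collection.

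The only delicate point is the bookkeeping that makes $\pi$ a genuine permutation. Both complements must be infinite, which is why I enlarge $m$ and use only even positions. I also need to confirm that $\pi$ is in $\G$, i.e.\ that $\pi^{\pm1}$ preserve $\I$. Both follow from $\pi$ having support inside a single $A_{\omega\cdot m}$.
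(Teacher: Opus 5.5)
Your proof is correct and takes the paper's route: you verify the tail property with $T$ equal to the next $\omega$-block above a bound for $A$. You use that permutations supported on a bounded initial segment $A_{\omega\cdot m}$ lie in $\G \subseteq \Aut(\Mcol)$, and then apply Lemma \ref{Lemma:TailProperty->Col}. The differences are cosmetic. The paper composes two swaps through an auxiliary countable set $E \in \I$ disjoint from $T \cup B$, where you build a single bijection of $D$ using the even/odd bookkeeping. You also explicitly handle the part $B_0$ of $B$ lying below the bound by taking $T = A_{\omega\cdot(n+2)} - A$, which the paper's terse proof leaves implicit.
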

\begin{proof}
By Lemma \ref{Lemma:TailProperty->Col}, it suffices to show that $\Mcol$ has the tail property (Definition \ref{def:TailProperty}). Let $\alpha < \omega^2$ and $T = A_{\alpha + \omega} - A_\alpha$. If $B \in \Mcol$ is disjoint from $A_\alpha$ then there is some countable $E \in \I$ that is disjoint from $T \cup B$. Let $\pi_0$ swap $B$ and a subset of $E$ and $\pi_1$ swap this subset with a subset of $T$. Both $\pi_0$ and $\pi_1$ are in $\G$ so $\pi_1 \pi_0$ is a desired permutation.\end{proof}

If $x \in \Mcol$ and $\fix_\G (S) \subseteq \sym_\G (x)$, then $S$ is said to be a \textit{support} of $x$. For every finite family $S$ of sets, we say $R$ is a \textit{refinement} of $S$ if $R$ is a finite partition of $\bigcup S$ such that for every $p \in R$ and $s \in S$, if $s \cap p \neq \emptyset$, then $p \subseteq s$. 
\begin{prop}\label{prop:FinRefine}
Every finite family of sets has a refinement.
\end{prop}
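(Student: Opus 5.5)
The plan is to take the atoms of the Boolean algebra generated by $S$. Let $S = \{s_1, \dots, s_n\}$ be a finite family of sets and put $X = \bigcup S$. For each $x \in X$ consider its ``type'' $\tau(x) = \{i \le n \mid x \in s_i\}$; this is a nonempty subset of $\{1,\dots,n\}$. Define $R$ to be the set of nonempty fibres of $\tau$, that is,
$$R = \Bigl\{ \bigcap_{i \in I} s_i - \bigcup_{j \notin I} s_j \;\Bigm|\; \emptyset \neq I \subseteq \{1,\dots,n\} \Bigr\} - \{\emptyset\}.$$
Each member of $R$ exists by Separation on $X$, and $R$ exists by Replacement applied to the finite set of nonempty $I \subseteq \{1,\dots,n\}$. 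Hence $R$ is finite, with at most $2^n - 1$ elements. No choice is involved, which matters since we are working inside choiceless models.

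Next I check the two required properties. First, $R$ is a partition of $\bigcup S$. Every $x \in X$ lies in the piece indexed by $I = \tau(x)$. Distinct pieces are disjoint, because $x$ lies in the piece indexed by $I$ exactly when $\tau(x) = I$. Also every piece is a subset of some $s_i$, since $I \neq \emptyset$, so $\bigcup R = X$. Second, $R$ refines $S$: let $p \in R$ be the piece indexed by $I$, and suppose $s_k \cap p \neq \emptyset$. Any $x \in s_k \cap p$ has $k \in \tau(x) = I$. Therefore every element of $p$ lies in $s_k$, which gives $p \subseteq s_k$.

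There is no real obstacle here. The only care needed is to index pieces by types so that the finiteness of $R$ and its definability are transparent. If instead the family is given merely as a finite set without an enumeration, I would replace $\{1,\dots,n\}$ by $S$ itself: index pieces by nonempty $I \subseteq S$ and use $\bigcap I - \bigcup(S - I)$. This avoids choosing an enumeration at all, and the argument is otherwise identical.
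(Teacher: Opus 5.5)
Your proof is correct. It differs from the paper's mainly in organization: the underlying partition is the same.

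The paper argues by induction on the size of the family. Given a refinement $\{q_j \mid j<k\}$ of $\{s_i \mid i<n\}$, it splits each block $q_j$ by the new set $s_n$ into $s_n \cap q_j$ and $q_j - s_n$. It then adds the leftover block $s_n - \bigcup_{i<n} s_i$. You instead write down all the atoms of the Boolean algebra generated by $S$ at once, as the nonempty fibres of the membership-type map $\tau$. Up to empty blocks, the two constructions produce the same partition.

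The inductive version is shorter to state. At each step it checks the refinement condition against only the one new set, with the inductive hypothesis covering the others. Your direct version has three small advantages:
\begin{enumerate}
\item It makes the bound $|R| \le 2^n - 1$ visible.
\item It discards empty pieces explicitly. The paper's $R$ can contain $\emptyset$, e.g.\ when $s_n \subseteq \bigcup_{i<n} s_i$, so strictly speaking that block must be removed to get a partition.
\item When you index by nonempty $I \subseteq S$, it needs no enumeration of $S$. This is harmless either way, since fixing one bijection with a natural number uses no choice.
\end{enumerate}
Neither argument uses choice, which is what matters for applying the proposition inside the permutation models.
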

\begin{proof}
By induction on $\omega$. Given $\{s_i \mid i \leq n\}$, suppose that $\{s_i \mid i < n\}$ has a refinement $\{q_j \mid j < k \}$. Then
$$R = \{s_n - \bigcup_{i<n} s_i\} \cup \{s_n \cap q_j \mid j < k\} \cup \{q_j - s_n \mid j < k\}$$
is a refinement of $\{s_i \mid i \leq n\}$
\end{proof}

\begin{lemma}
For every $x \in \Mcol$, there is some $\alpha < \omega^2$ and a finite partition $P$ of $A_\alpha$ such that $\ker(x) \subseteq A_\alpha$ and $P$ supports $x$.
\end{lemma}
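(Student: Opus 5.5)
Since $x \in \Mcol$, its kernel lies in $\I$, so I can fix $\alpha_0 < \omega^2$ with $\ker(x) \subseteq A_{\alpha_0}$. Because $x$ is $\F$-symmetric, the definition of $\F$ gives some $E \in \I$ and a finite $S \subseteq P(E)$ with $\fix_\G(S) \subseteq \sym_\G(x)$. As $E \in \I$, I can pick $\alpha_1$ with $E \subseteq A_{\alpha_1}$. Set $\alpha = \max\{\alpha_0, \alpha_1\}$. Then $\ker(x) \subseteq A_\alpha$ and every member of $S$ is a subset of $A_\alpha$.

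Next I turn $S$ into a partition of $A_\alpha$. Let $S' = S \cup \{A_\alpha\}$, and apply Proposition \ref{prop:FinRefine} to obtain a refinement $P$ of $S'$. Since $\bigcup S' = A_\alpha$, $P$ is a finite partition of $A_\alpha$. The refinement condition says that each $s \in S$ is a union of blocks of $P$: any $p \in P$ meeting $s$ is contained in $s$, and the blocks cover $A_\alpha \supseteq s$. (If $P$ were to contain $\emptyset$, I would drop it; this is harmless.)

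It remains to show $\fix_\G(P) \subseteq \fix_\G(S) \subseteq \sym_\G(x)$. Let $\pi \in \fix_\G(P)$, so $\pi p = p$ for every $p \in P$. For $s \in S$, write $s = \bigcup\{p \in P \mid p \subseteq s\}$. Then $\pi s = \bigcup \{\pi p\} = s$, so $\pi \in \fix_\G(S)$ and hence $\pi x = x$. Thus $P$ supports $x$.

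The only point requiring care is that the supports in $\F$ are families of subsets of some $E \in \I$, not subsets of $A_\alpha$ itself, so the refinement must be taken with $A_\alpha$ adjoined. This guarantees that $P$ covers all of $A_\alpha$ and not merely $\bigcup S$. Apart from that, the argument is a routine unpacking of the definitions.
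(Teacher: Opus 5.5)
Your proof is correct and follows the paper's argument. You choose $\alpha$ large enough that $A_\alpha$ contains both $\ker(x)$ and every member of the support $S$, refine via Proposition \ref{prop:FinRefine}, and observe that fixing every block setwise fixes each member of $S$. The only difference is cosmetic: you adjoin $A_\alpha$ to $S$ before refining, whereas the paper refines $S$ to $R$ and then adds the leftover block $A_\alpha - \bigcup R$.
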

\begin{proof}
Let $S$ be a support of $x$ such that $S \in [P(A_\beta)]^{<\omega}$ for some $\beta < \omega^2$, and let $\gamma$ be such that $\ker(x) \subseteq A_\gamma$. Set $\alpha = \max \{\beta, \gamma\}$. Let $R$ be a refinement of $S$ and $P = R \cup \{A_\alpha - \bigcup R\}$, which is a finite partition of $A_\alpha$. For any $\pi \in \fix_\G(P)$ and $a \in E \in S$, $a \in B$ for some $B \in R$; so $B \subseteq E$ and hence $\pi a \in E$. This shows that $\pi E = E$. Therefore, $\fix_\G (P) \subseteq \fix_\G (S)$, making $P$ a support of $x$.
\end{proof}

\begin{lemma}
$\Mcol \not\models$ RP$^-$.
\end{lemma}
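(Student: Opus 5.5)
The plan is to exhibit one sentence $\varphi$, possibly with a parameter from $\Mcol$, that holds in $\Mcol$ but fails relativized to every transitive $t\in\Mcol$. The guiding fact is that every $t\in\Mcol$ has $\ker(t)\subseteq A_\alpha$ for some $\alpha<\omega^2$. By the preceding lemma, $t$ is also supported by a finite partition $P$ of such an $A_\alpha$. Meanwhile, $\Mcol$ has no largest set of urelements, and it has sets of urelements of unboundedly large ``height'' below $\omega^2$. So $\varphi$ should assert an unboundedness property of the sets of urelements that cannot be satisfied by a family confined to $A_\alpha$ and closed under $\fix_\G(P)$.

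As a first attempt I will take $\varphi$ to be the conjunction of the following three statements:
\begin{enumerate}
\item[(a)] for all sets of urelements $A,B$, the set $A\cup B$ exists;
\item[(b)] for every set $x$, some set of urelements contains every urelement in $x$;
\item[(c)] for every set of urelements $A$ there is a set of urelements $B\supseteq A$ such that $B-A$ is infinite.
\end{enumerate}
Each holds in $\Mcol$. Clause (a) holds trivially. For (b), take $\ker(x)$, which lies in $\Mcol$ because it is fixed by $\sym(x)$ and its kernel is in $\I$. For (c), given $A\subseteq A_\alpha$, let $B=A_{\alpha+\omega}$; being an initial segment, $B$ is fixed by every permissible $\pi$ fixing $\{A_{\alpha+\omega}\}$, so it is supported.

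Now suppose $t\in\Mcol$ is transitive and $\varphi^t$ holds. Fix $\alpha$ and a finite partition $P$ of $A_\alpha$ with $\ker(t)\subseteq A_\alpha$ and $\fix_\G(P)\subseteq\sym_\G(t)$. Let $\mathcal{U}$ be the family of sets of urelements in $t$. Then $\mathcal{U}\subseteq P(A_\alpha)$, $\mathcal{U}$ is closed under binary unions by (a), it is closed under $\fix_\G(P)$, and by (c) every member of $\mathcal{U}$ has an extension in $\mathcal{U}$ by infinitely many new points. The goal is to derive a contradiction.

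I will first classify sets of urelements in $\Mcol$: each is supported by some finite partition $Q$ of some $A_\beta$. Since $\fix_\G(Q)$ acts on each infinite block with enough freedom (any permutation of a block that is the identity elsewhere should be permissible), every set of urelements is a union of $Q$-blocks up to a finite set. Take $A\in\mathcal{U}$ and apply $\fix_\G(P)$. If $A$ meets a $P$-block $p$ in an infinite, co-infinite set, or in a finite nonempty set, then finitely many images of $A$ under $\fix_\G(P)$ cover all of $p$ except finitely many points. By union closure, $\mathcal{U}$ then contains a set that, up to finitely many points, is a union of $P$-blocks containing $A$. Iterating (c) produces an increasing chain in $\mathcal{U}$ whose members each add infinitely many new points. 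After normalization, each member of the chain contains strictly more $P$-blocks up to finite sets. Since $P$ has only finitely many blocks, this is impossible.

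I expect the normalization step to be the main obstacle. The difficulty is making precise that orbit-unions under $\fix_\G(P)$ of a set with an infinite, co-infinite trace in a block exhaust the block modulo finite, using only finitely many unions. This requires checking that the relevant block permutations are permissible. It also requires handling the leftover finite parts, which (c) does not control; this is why (c) demands infinitely many new points. If the finite-modulo bookkeeping becomes unwieldy, my fallback is to strengthen (c) to ``$B-A$ is not a finite union of sets each of which is Dedekind-finite or a singleton'', or to add clause (b) so that $t$ must contain $\ker$ of its own members, and then rerun the same block-counting argument.
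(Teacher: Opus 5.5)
\textbf{There is a genuine gap: the sentence you chose is reflected.} Your step from $\varphi^t$ to ``every member of $\mathcal U$ has an extension in $\mathcal U$ by infinitely many new points'' silently assumes that ``$B-A$ is infinite'' is absolute between $t$ and $\Mcol$. Nothing in (a)--(c) forces $t$ to contain the natural numbers or the bijections that witness finiteness.

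Concretely, take $t=A_\omega\cup[A_\omega]^{<\omega}$. It is a transitive set in $\Mcol$: its kernel is $A_\omega\in\I$, it is supported by $\{A_\omega\}$, and each finite $F\subseteq A_\omega$ is supported by $\{F\}$. It also contains $\emptyset$, and it satisfies (a)--(c). The reason is that $t$ contains no ordered pairs, hence no nonempty functions, and its only natural number is $0$. So in $t$ every nonempty set is infinite, and (c) is witnessed by $A\cup\{a\}$ for any $a\in A_\omega-A$.

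Your fallback variants of (c) do not touch this, because the problem is absoluteness, not the strength of the clause. The repair is what the paper's phrase ``$t$ satisfies enough amount of ZFU'' does. Conjoin to $\varphi$ a finite fragment of ZFU that lets you build in $t$, by induction on size, a bijection between each finite $X\in t$ and a natural number; Empty Set, Pairing, Union and a few instances of Separation suffice. Then infinite$^t$ implies infinite, and $\emptyset\in t$ makes $\mathcal U$ nonempty.

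\textbf{With that repair your plan goes through, but your normalization step contains a false claim.} If $A\cap p$ is finite and nonempty and $p$ is infinite, then finitely many $\fix_\G(P)$-images of $A$ cover only finitely many points of $p$. You do not need that case; saturate only the infinite traces.
\begin{itemize}
\item If $A\cap p$ is infinite and co-infinite in $p$, let $\pi$ swap $A\cap p$ with $p-A$ and fix every other urelement. It moves only points of $A_\alpha$, so it preserves $\I$ and lies in $\fix_\G(P)$. Then $A\cup\pi A=A\cup p$ exactly.
\item If $A\cap p$ is co-finite, a finite swap does the same.
\end{itemize}
Once every trace of a member of $\mathcal U$ is finite or a whole block, (c) must add infinitely many points inside some block where the trace was finite. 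Saturating again therefore strictly increases the number of blocks contained in the set, and $|P|+1$ rounds give a contradiction. Clause (b) is never used.

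\textbf{Comparison with the paper.} The paper uses the same swap but finishes differently. It takes $|P-t|+1$ pairwise disjoint infinite sets in $t$ avoiding $\bigcup(P\cap t)$. It then uses closure of $t$ under symmetric difference to show that each of their traces on a block $E\in P-t$ is finite or co-finite, since otherwise $E=A\triangle\pi A\in t$. It concludes by pigeonhole. Your union-and-chain finish is an equally valid alternative to that pigeonhole argument.
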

\begin{proof}
Let $\varphi$ be the sentence
$$\forall B \exists A \subseteq \A - B (A \text{ is infinite}),$$
which holds in $\Mcol$ because $A_{\alpha + \omega} - A_\alpha$ is in $\Mcol$ for every $\alpha < \omega^2$. Suppose \textit{for reductio} that $t \models \varphi$ for some transitive $t \in \Mcol$ which satisfies enough amount of ZFU. Fix some $A_\alpha$ and a finite partition $P$ of $A_\alpha$ such that $\ker(t) \subseteq A_\alpha$ and $P$ supports $t$. Let $m$ be the size of the set $P - t$. Then in $t$, there is a family $X$ of $m+1$-many pairwise disjoint infinite sets of urelements such that $\bigcup X \cap \bigcup (P\cap t) = \emptyset$.

\begin{claim}
For each $A \in X$ and $E \in P - t$, $A \cap E$ is not both infinite and co-infinite in $E$.
\end{claim}
\begin{claimproof}
Suppose otherwise. Let $\pi$ only swap $A \cap E$ and $E - A$. It follows that $\pi \in \fix_\G(P)$. Thus, $\pi t = t$ and $\pi A \in t$. But $E = A \triangle \pi A$ and $t$ is closed under symmetric difference, so $E \in t$---contradiction.\end{claimproof}
\vspace{6pt}

\noindent It follows that for some $E \in P - t$, there are two disjoint $A, A' \in X$ such that $A \cap E$ and $A' \cap E$ are both co-finite, which is a contradiction.\end{proof}

\subsection{Collection $\land$ RP$^-$ $\not\to$ RP$^\sim$}
We start in some model $U \models \ZFCU$ in which $$\A = \bigsqcup_{\alpha <\omega^2} A_\alpha$$ where each $A_\alpha = \{a_{\alpha, n} \mid n < \omega\}$. That is, we replace each urelement in $\Mcol$ with a countable set of urelements. So $\A$ is viewed as a matrix of which each $A_\alpha$ is a row. For each $\alpha < \omega^2$, let $D_\alpha = \bigcup_{\beta < \alpha}A_\beta$.

\begin{enumerate}
\item $E \subseteq \A$ is \textit{short} if $\{\alpha < \omega^2 \mid E \cap A_\alpha \neq \emptyset\}$ is finite.

\item  $B \subseteq \A$ is \textit{narrow} if for every $\alpha < \omega^2$, $B \cap A_\alpha$ is finite. 

\item $J = \{ X  \mid  X \in [P(D_\alpha)]^{<\omega} \text{for some } \alpha < \omega^2  \text{ and } \forall B \in X (B \text{ is narrow}) \}.$

\item $I =\{ E \cup X \mid E \text{ is short }\land X \in J \}$.

\item $\I = \{ C \subseteq \A \mid \exists \alpha < \omega^2 (C \subseteq D_\alpha)\}$.

\item A permutation $\pi$ of $\A$ is \textit{permissible} if 

\subitem (i) $\pi A_\alpha$ is short for every $\alpha < \omega^2$; and

\subitem (ii) $\pi \I = \I$.

\item $\G = \{\pi \mid \pi \text{ and } \pi^{-1} \text{ are permissible permutations of } \A \}$.

\item $\F = \{H \subseteq \G \mid \exists S \in I (\fix_\G (S) \subseteq H)\}$

\item $\Mrp = \{ x \mid x \text{ is hereditarily $\F$-symmetric } \land \ker(x) \in \I\}$.
\end{enumerate}
\begin{prop}
$\F$ is a normal filter on $\G$ and $\I$ is $\G$-flexible. 
\end{prop}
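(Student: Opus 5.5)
The plan is to verify the clauses of Definition \ref{permutationmodeldef} directly. In order: that $\G$ is a group, the filter axioms for $\F$, closure of $\F$ under conjugation, and finally the three flexibility clauses for $\I$. Throughout I will use one elementary observation, which I call $(\ast)$: if $\pi$ is permissible and $E$ is short, then $\pi E$ is short. Indeed, $E \subseteq A_{\alpha_0} \cup \dots \cup A_{\alpha_k}$, so $\pi E \subseteq \pi A_{\alpha_0} \cup \dots \cup \pi A_{\alpha_k}$, and this is a finite union of short sets, hence short.

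\textbf{$\G$ is a group.} Suppose $\pi, \sigma$ are permissible. Then $\sigma\pi A_\alpha$ is short by $(\ast)$ applied to $\sigma$ and the short set $\pi A_\alpha$. Also $\sigma\pi\I = \I$. So permissibility is closed under composition. Since $\G$ consists of those $\pi$ for which both $\pi$ and $\pi^{-1}$ are permissible, $\G$ is closed under composition and inverses, and it contains the identity.

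\textbf{$\F$ is a filter.} I will use that $\fix_\G(S)$ is a subgroup and that $\fix_\G(S \cup S') \subseteq \fix_\G(S) \cap \fix_\G(S')$. These reduce the filter axioms to three facts about $I$.
\begin{enumerate}
\item $\emptyset \in I$, which gives nonemptiness.
\item $\{a\} \in I$ for every urelement $a$, since $\{a\}$ is short; hence $\sym_\G(a) \supseteq \fix_\G(\{a\}) \in \F$.
\item $I$ is closed under binary unions. If $S = E \cup X$ and $S' = E' \cup X'$, then $E \cup E'$ is short. Also $X \cup X' \in J$: if $X \subseteq P(D_\alpha)$ and $X' \subseteq P(D_{\alpha'})$, both lie in $[P(D_{\max(\alpha,\alpha')})]^{<\omega}$, and their members are still narrow.
\end{enumerate}
Upward closure holds by definition of $\F$.

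\textbf{Conjugation, the main step.} Since $\pi\fix_\G(S)\pi^{-1} = \fix_\G(\pi S)$, it suffices to show $\pi S \in I$ whenever $\pi \in \G$ and $S = E \cup X \in I$. The short part is handled by $(\ast)$: $\pi E$ is short. For the family part, let $B \in X$ with $B \subseteq D_\alpha$ narrow. First, $D_\alpha \in \I$ and $\pi\I = \I$, so $\pi B \subseteq \pi D_\alpha \subseteq D_\beta$ for some $\beta < \omega^2$. Since $X$ is finite, one bound $\beta$ serves all members of $\pi X$.

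The delicate point is narrowness, and this is where I expect the real work to lie: it is exactly why $\G$ requires $\pi^{-1}$, and not just $\pi$, to be permissible. For any row $A_\delta$ we have
$$\pi B \cap A_\delta = \pi\bigl(B \cap \pi^{-1}A_\delta\bigr).$$
Since $\pi^{-1}$ is permissible, $\pi^{-1}A_\delta$ is short, so it meets only finitely many rows. On each of those rows $B$ is finite, because $B$ is narrow. Hence $B \cap \pi^{-1}A_\delta$ is finite, and so is $\pi B \cap A_\delta$. Thus $\pi B$ is narrow, $\pi X \in J$, and $\pi S \in I$.

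\textbf{$\I$ is $\G$-flexible.} Clause (i) is immediate, since $\I$ consists of the subsets of the sets $D_\alpha$ and these are increasing. Clause (ii) holds because $\pi\I = \I$ for every $\pi \in \G$. For clause (iii), suppose $A \cup \{a\} \subseteq D_\alpha$ and $a \notin A$. Let $b = a_{\alpha,0}$, which lies outside $D_\alpha$, and let $\pi$ be the transposition $(a\ b)$. Then:
\begin{enumerate}
\item $\pi = \pi^{-1}$.
\item Each $\pi A_\gamma$ differs from $A_\gamma$ in at most one point, so it meets at most two rows and is short.
\item $\pi D_\beta \subseteq D_{\max(\beta,\alpha+1)}$, so $\pi\I \subseteq \I$, and applying the same to $\pi^{-1} = \pi$ gives $\pi\I = \I$.
\end{enumerate}
Hence $\pi \in \G$. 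Moreover $\pi$ fixes $A$ pointwise, since $a, b \notin A$, and $\pi a = b \neq a$, as clause (iii) requires.
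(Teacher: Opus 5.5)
Your proof is correct and follows the same route as the paper. The paper's proof is just the observation that each $\pi \in \G$ preserves shortness (using permissibility of $\pi$) and narrowness (using permissibility of $\pi^{-1}$), so $\pi I = I$ and $\F$ is normal; flexibility then follows because transpositions lie in $\G$. You have filled in the routine details the paper leaves implicit: that $\G$ is a group, the filter axioms, and an explicit choice of transposition.
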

\begin{proof}
For every $\pi \in \G$, short set $E$ and narrow set $B$, $\pi E$ is short and $\pi B$ is narrow, and so $\pi I = I$. It follows that $\F$ is a normal filter. Any permutation of $\A$ that swaps only two urelements is in $\G$, so $\I$ is $\G$-flexible.
\end{proof}

\noindent So $\Mrp \models \ZFU$ by Theorem \ref{thm:FThmofSKPM}.  Every $A_\alpha$ is countable in $\Mrp$ as $\fix_\G (A_\alpha) \in \F$. Since every $X \in J$ has a refinement (Proposition \ref{prop:FinRefine}), it follows that every $x \in \Mrp$ has a support of the form $E \cup P$ (i.e., $\fix_\G (E \cup P) \subseteq \sym(x)$), where $E$ is a short set and $P$ is a finite family of pairwise disjoint narrow subsets of some $D_\alpha$.

\begin{lemma}\label{lemma:NarrowSwap}
Let $\alpha < \omega^2$ and $B_0$ and $B_1$ be two equinumerous narrow sets in $\Mrp$. If $B_0 \cup B_1 \subseteq \A - D_\alpha$, then there is some $\pi \in \fix_\G (D_\alpha)$ with $\pi B_0 = B_1$.
\end{lemma}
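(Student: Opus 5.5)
The plan is to build $\pi$ directly in the ground model $U$ and then check that $\pi \in \G$; no appeal to symmetry is needed beyond extracting a bijection.

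\emph{Step 1 (set-up).} Since $B_0, B_1 \in \Mrp$, we have $\ker(B_i) = B_i \in \I$. So there is some $\gamma < \omega^2$ with $\alpha \le \gamma$ and $B_0 \cup B_1 \subseteq D_\gamma - D_\alpha$. The sets $B_0$ and $B_1$ are equinumerous in $\Mrp$, hence in $U$. Fix in $U$ a bijection $g : B_0 \to B_1$.

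\emph{Step 2 (row-wise extension).} For each $\beta$ with $\alpha \le \beta < \gamma$, both $B_0 \cap A_\beta$ and $B_1 \cap A_\beta$ are finite, because the $B_i$ are narrow. Since $A_\beta$ is countably infinite, $A_\beta - B_0$ and $A_\beta - B_1$ are both countably infinite. Using AC in $U$, choose for each such $\beta$ a bijection $h_\beta : A_\beta - B_0 \to A_\beta - B_1$. Define
\[ \pi = g \cup \bigcup_{\alpha \le \beta < \gamma} h_\beta \cup \mathrm{id}\restriction(D_\alpha \cup (\A - D_\gamma)). \]
The domains of the pieces partition $\A$: they are $B_0$, the sets $(D_\gamma - D_\alpha) - B_0$ split by rows, and the rest. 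The ranges partition $\A$ in the same way with $B_1$ in place of $B_0$. Hence $\pi$ is a permutation of $\A$ with $\pi B_0 = B_1$, and it fixes $D_\alpha$ pointwise.

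\emph{Step 3 (permissibility).} This is the only step needing care.

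For clause (i), take $\beta$ with $\alpha \le \beta < \gamma$. Then $\pi A_\beta \subseteq A_\beta \cup g[B_0 \cap A_\beta]$. The second set is a finite set of urelements, so it meets only finitely many rows, and therefore $\pi A_\beta$ is short. For every other $\beta$ we have $\pi A_\beta = A_\beta$.

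For clause (ii), note that $\pi D_\gamma = D_\gamma$ and $\pi$ is the identity outside $D_\gamma$. So for any $C \subseteq D_\delta$ we get $\pi C \subseteq D_{\max(\delta,\gamma)}$, which gives $\pi \I \subseteq \I$.

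The inverse $\pi^{-1} = g^{-1} \cup \bigcup_\beta h_\beta^{-1} \cup \mathrm{id}$ has exactly the same form with $B_0$ and $B_1$ interchanged, so the same argument shows $\pi^{-1}$ is permissible. Therefore $\pi \in \fix_\G(D_\alpha)$ and $\pi B_0 = B_1$.

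The main point to get right is Step 2: an arbitrary extension of $g$ to a permutation could scatter a row over infinitely many rows and violate clause (i). Extending row by row on the complements, where narrowness makes each complement co-finite in its row, avoids this.
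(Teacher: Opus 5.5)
Your proof is correct, but it takes a different route from the paper's.

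The paper argues in two moves. First it observes that any permutation which simply swaps two disjoint narrow sets lies in $\G$, because it moves only finitely many urelements in each row $A_\beta$. It then picks an auxiliary narrow set $C$, equinumerous with $B_0$ and disjoint from $B_0 \cup B_1 \cup D_\alpha$, and sets $\pi = \pi_1\pi_0$, where $\pi_0$ swaps $B_0$ with $C$ and $\pi_1$ swaps $C$ with $B_1$. The detour through $C$ is what lets the paper handle the case $B_0 \cap B_1 \neq \emptyset$, where a direct swap is not defined.

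You instead build a single permutation: you extend a bijection $g \colon B_0 \to B_1$ row by row, with bijections $A_\beta - B_0 \to A_\beta - B_1$ on the complements. This handles overlap automatically and needs no fresh set. In exchange you must check permissibility by hand, and you do so correctly. The inclusion $\pi A_\beta \subseteq A_\beta \cup g[B_0 \cap A_\beta]$ shows that $\pi A_\beta$ is short. The identity $\pi D_\epsilon = D_\epsilon$ for all $\epsilon \geq \gamma$ gives clause (ii). Strictly, this yields $\pi\I \subseteq \I$, the symmetric argument yields $\pi^{-1}\I \subseteq \I$, and the two together give equality for both $\pi$ and $\pi^{-1}$.

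The paper's version is shorter and reuses the swap-through-a-fresh-set trick that appears throughout the paper. However, it leaves two checks implicit. One is that a suitable $C$ exists, which uses the fact that $B_0$ is countable, being contained in the countable set $D_\gamma$. The other is that such swaps respect $\I$. Your version carries out all of these verifications explicitly.
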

\begin{proof}
First, every permutation that only swaps two disjoint narrow sets in $\Mrp$ is in $\G$ because it only moves finitely many urelements on each $A_\alpha$. Moreover, we can always find a narrow set $C$ disjoint from $B_0 \cup B_1 \cup D_\alpha$. So let $\pi_0$ swap $B_0$ and $C$, and $\pi_1$ swap $C$ and $B_1$. $\pi_1 \pi_0$ is as desired. \end{proof}

\begin{definition}\label{def:Talpha}
For every $\alpha < \omega^2$, $B_{\alpha,0} = \{a_{\beta, 0} \mid \alpha < \beta < \alpha + \omega\}$ and $T_\alpha = A_\alpha \cup B_{\alpha,0}$.
\end{definition}

\begin{lemma}\label{lemma:TalphaTail}
For every $\alpha < \omega^2$ and $D \subseteq \A - D_\alpha$ in $\Mrp$, there is some $\pi \in \fix_\G(D_\alpha)$ such that $\pi D \subseteq T_\alpha$. Hence, $\Mrp \models$ Collection.
\end{lemma}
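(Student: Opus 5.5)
The plan is to first pin down the shape of an arbitrary $D\subseteq \A - D_\alpha$ in $\Mrp$, and then move it into $T_\alpha$ with two swaps. Once we have $\pi\in\fix_\G(D_\alpha)$ with $\pi D\subseteq T_\alpha$, the set $T_\alpha - D_\alpha$ witnesses the tail property for $D_\alpha$. Since every set of urelements in $\Mrp$ lies inside some $D_\alpha$, this gives the tail property in general, and Collection follows from Lemma \ref{Lemma:TailProperty->Col}.

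\textbf{Step 1 (structure of $D$).} Fix a support $E\cup P$ of $D$, where $E$ is short and $P$ is a finite family of pairwise disjoint narrow subsets of some $D_\gamma$. Enlarge $\gamma$ so that also $\ker(D)=D\subseteq D_\gamma$. Transpositions of two urelements lie in $\G$. If $a\in D-(E\cup\bigcup P)$, pick $b$ in a row beyond $\gamma$ with $b\notin E\cup\bigcup P$; this is possible because $E$ meets only finitely many rows and $\bigcup P$ misses all rows beyond $\gamma$. The transposition $(a\ b)$ fixes $E$ pointwise and each member of $P$ setwise, so it lies in $\fix_\G(E\cup P)$, yet it moves $D$. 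This is a contradiction, so $D\subseteq E\cup\bigcup P$. Likewise, for $p\in P$, if $a\in (p\cap D)-E$ and $b\in p-D-E$, then $(a\ b)\in\fix_\G(E\cup P)$ moves $D$. Hence each $p-E$ is either contained in $D$ or disjoint from it. Therefore $D=F\cup N$, where $F=D\cap E$ is short and $N$ is a union of some sets $p-E$, hence narrow. Moving finitely many rows from $N$ into $F$, we may assume that $N$ avoids the finitely many rows meeting $F\cup A_\alpha$.

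\textbf{Step 2 (moving the pieces).} $F$ is a countable subset of finitely many rows $\geq\alpha$, and $N$ is a narrow subset of $\A-D_\alpha$. In $U$ we pick $F'\subseteq A_\alpha$ and $N'\subseteq B_{\alpha,0}$ equinumerous with $F$ and $N$ respectively (both may be countably infinite). For the narrow part we use Lemma \ref{lemma:NarrowSwap}. Any narrow subset of $B_{\alpha,0}$ is in $\Mrp$ (supported by itself, with kernel in $D_{\alpha+\omega}$). Countable narrow sets are equinumerous in $\Mrp$ via bijections supported by the narrow sets themselves. So there is $\pi_1\in\fix_\G(D_\alpha)$ with $\pi_1 N=N'$. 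We can first move $N$ away from $F$ and $A_\alpha$ using an auxiliary narrow set, as in the proof of that lemma, so that $\pi_1$ fixes $F\cup A_\alpha$ pointwise.

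For the short part, let $\pi_0$ swap $F$ with $F'$. To keep $F'$ disjoint from $F$, first use an auxiliary infinite subset of a fresh row. $\pi_0$ is permissible: it changes only finitely many rows, mapping each to a short set, and it preserves $\I$ because everything involved lies in some $D_\delta$. The composite $\pi_0\pi_1$ then fixes $D_\alpha$ and maps $D$ into $A_\alpha\cup B_{\alpha,0}=T_\alpha$.

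\textbf{Main obstacle.} The delicate point is verifying that the swaps are genuinely in $\G$, i.e. that $\pi$ and $\pi^{-1}$ send every row to a short set. This is exactly why the narrow part must go into $B_{\alpha,0}$ and not into $A_\alpha$: sending infinitely many rows' worth of points into $A_\alpha$ would make $\pi^{-1}A_\alpha$ non-short. It is also why Step 1 has to separate $D$ cleanly into a short piece and a narrow piece. I expect Step 1, showing that $D$ cannot contain points outside its support, to be the conceptual heart of the proof. The remaining bookkeeping with auxiliary disjoint copies is routine, in the style of Lemma \ref{lemma:NarrowSwap}.
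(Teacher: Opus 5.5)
Your proof follows the paper's route. A transposition argument against a support $E\cup P$ splits $D$ into a short piece and a narrow piece; your Step 1 is a slightly sharper form of the paper's claim that only finitely many rows meet $D$ in an infinite set. The short piece is then swapped into $A_\alpha$, and the narrow piece is moved into $B_{\alpha,0}$ by Lemma \ref{lemma:NarrowSwap}. Step 1 is correct as written, and so is the membership of your swaps in $\G$.

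One assertion is false and should be deleted: countable narrow sets are \emph{not} in general equinumerous in $\Mrp$, and certainly not via bijections supported by the sets themselves.
\begin{itemize}
\item Let $N,N'$ be disjoint infinite narrow sets, and let $f\colon N\to N'$ be a bijection with support $E\cup P$.
\item Since a narrow set meets a short set in a finite set, $N-E$ is infinite.
\item So two distinct $a,a'\in N-E$ lie in the same cell of the finite partition determined by $P$, and the transposition $(a\ a')$ lies in $\fix_\G(E\cup P)$.
\item But $(a\ a')$ sends $\langle a,f(a)\rangle$ to $\langle a',f(a)\rangle$, so it moves $f$. Hence no such $f$ is in $\Mrp$, which is the same phenomenon as Lemma \ref{lemma:narrow<->D-fin}.
\end{itemize}
You do not need this claim. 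The proof of Lemma \ref{lemma:NarrowSwap} only uses a bijection in $U$ to build the swaps, and whether those swaps lie in $\G$ is checked in $U$. So the $U$-equinumerosity of $N$ and $N'$ that you already arranged suffices, and this is also how the paper applies the lemma.

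There is also a small bookkeeping point. $F$ may contain finitely many points of $B_{\alpha,0}$, coming from rows $\alpha+k$ with $k\geq 1$. You must therefore choose $N'\subseteq B_{\alpha,0}-F$, which is possible since $F\cap B_{\alpha,0}$ is finite; otherwise $\pi_1$ cannot fix $F$ pointwise and $\pi_0$ could disturb $N'$. With these two corrections the argument is complete.
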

\begin{proof}
We first show that $\{\alpha < \omega^2 \mid D \cap A_\alpha \text{ is infinite}\}$ is finite. Otherwise, let $E \cup P$ be a support of $D$. There is some $\alpha$ such that $D\cap A_\alpha$ is infinite and $E \cap A_\alpha = \emptyset$. $D \cap A_\alpha - \bigcup P$ is infinite, so let $\pi$ swap some $a \in D \cap A_\alpha - \bigcup P$ with some $b \notin D \cup E \cup \bigcup P$. Then $\pi \in \fix_\G(E \cup P)$ and so $\pi b \in D$---contradiction.

Now let $\gamma < \omega^2$ be such that $D \subseteq D_\gamma$ and $X = \{\beta < \gamma \mid A_\beta \cap D \text{ is infinite}\}$. We may assume $\alpha \in X$. Let $\sigma_0$ swap $\bigcup_{\beta \in X} A_\beta$ with $A_\gamma$ and $\sigma_1$ swap $A_\gamma$ with $A_\alpha$. Since $X$ is finite, $\sigma_1 \sigma_0 \in G$. Furthermore, since the set $D' = D - \bigcup_{\beta \in X} A_\beta$ is narrow, by Lemma \ref{lemma:NarrowSwap} there is some $\rho \in \fix_\G (D_{\alpha + 1})$ and $\rho D' \subseteq B_{\alpha,0}$. It is easy to check that $\pi = \rho \sigma_1\sigma_0$ is a desired permutation. It follows that $\Mrp$ has the tail property: for every $A \in \Mrp$ such that $A \subseteq D_\alpha$ for some $\alpha < \omega^2$, $T_\alpha$ is a desired set. Hence, $\Mrp \models$ Collection by Lemma \ref{Lemma:TailProperty->Col}. \end{proof}

\begin{lemma}\label{lemma:narrow<->D-fin}
For every set of urelements $B$ in $\Mrp$, $B$ is infinite and narrow if and only if $\Mrp \models \aleph (B) = \omega$.
\end{lemma}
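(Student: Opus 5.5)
The plan is to read $\aleph(B)=\omega$ in $\Mrp$ as ``$B$ is infinite and Dedekind-finite in $\Mrp$'', that is, $\omega\not\preceq B$ in $\Mrp$. Note that $\aleph(B)\geq\omega$ exactly when $B$ is infinite. I then prove the two directions separately. The right-to-left direction is easy. The left-to-right direction rests on identifying which urelements are fixed by the group of a support.

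\emph{Right to left.} Suppose $\Mrp\models\aleph(B)=\omega$, so $B$ is infinite, and suppose for contradiction that $B$ is not narrow. Then there is some $\alpha<\omega^2$ with $B\cap A_\alpha$ infinite.
\begin{itemize}
\item As noted after the normal-filter proposition, $A_\alpha$ is countable in $\Mrp$. Concretely, the map $n\mapsto a_{\alpha,n}$ has support $A_\alpha$, which is short, and its kernel lies in $D_{\alpha+1}\in\I$, so it belongs to $\Mrp$.
\item By Separation, $B\cap A_\alpha\in\Mrp$.
\item $B\cap A_\alpha$ is an infinite subset of a countable set, so in $\Mrp$ it is countably infinite. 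This gives an injection of $\omega$ into $B$ in $\Mrp$, contradicting $\aleph(B)=\omega$.
\end{itemize}

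\emph{Left to right.} Let $B\in\Mrp$ be infinite and narrow, and suppose $f\in\Mrp$ is an injection from $\omega$ into $B$. By the remark following the normal-filter proposition, $f$ has a support of the form $E\cup P$: here $E$ is short and $P$ is a finite family of pairwise disjoint narrow subsets of some $D_\gamma$.

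Every $\pi\in\fix_\G(E\cup P)$ fixes $f$ and fixes every $n\in\omega$, so it fixes each value $f(n)$. Hence $\mathrm{ran}(f)$ is contained in the set $Z$ of urelements fixed by all of $\fix_\G(E\cup P)$.

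The key step is the claim
\[
Z\subseteq E\cup\bigcup\{p\in P \mid |p|=1\}.
\]
Take an urelement $a$ outside the right-hand side. There are two cases.
\begin{itemize}
\item If $a\in p$ for some $p\in P$ with $|p|\geq 2$, let $\pi$ transpose $a$ with another element of $p$. Then $\pi$ fixes $p$ setwise, fixes every other member of $P$ by disjointness, and fixes $E$ pointwise provided the other element lies outside $E$.
\item If $a\notin E\cup\bigcup P$, let $\pi$ transpose $a$ with some $b\notin E\cup\bigcup P$. Such $b$ exist in abundance, for instance in rows disjoint from $E$ and beyond $D_\gamma$.
\end{itemize}
A transposition of two urelements lies in $\G$, as observed in the proof that $\I$ is $\G$-flexible. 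In both cases $\pi\in\fix_\G(E\cup P)$ and $\pi a\neq a$, so $a\notin Z$.

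The main point needing care is the case $a\in p\subseteq E$. The fix is to first refine, replacing $p$ by $p\cap E$ and $p-E$ (together with $E$) in the support. Then we may assume every $p\in P$ is disjoint from $E$ or contained in $E$, and any $a\in E$ lies in the right-hand side anyway.

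\emph{Conclusion.} We have $\mathrm{ran}(f)\subseteq B\cap Z\subseteq (B\cap E)\cup F_P$, where $F_P$ is the union of the singleton members of $P$. The set $F_P$ is finite because $P$ is finite. The set $B\cap E$ is finite because $E$ meets only finitely many rows and $B$ meets each row in a finite set. So $\mathrm{ran}(f)$ is finite, contradicting injectivity of $f$. Therefore $\omega\not\preceq B$ in $\Mrp$, and since $B$ is infinite, $\Mrp\models\aleph(B)=\omega$.
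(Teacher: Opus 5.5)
Your proof is correct and follows essentially the same route as the paper. The easy direction uses the countability of $A_\alpha$ in $\Mrp$, and the main direction uses transpositions in $\fix_\G(E\cup P)$, together with the shortness of $E$ and the narrowness of $B$, to rule out an injection $f\colon\omega\to B$ in $\Mrp$; the only difference is bookkeeping, since the paper restricts to the infinitely many range elements in rows disjoint from $E$ and pigeonholes two of them into one cell of $P$ (or the complement of $\bigcup P$) to swap with each other, whereas you compute the urelements fixed by $\fix_\G(E\cup P)$, which is why you need the extra refinement of $P$ against $E$.
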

\begin{proof}
Assume $B$ is infinite and narrow. Suppose that $f \in \Mrp$ is an injection from $\omega$ into $B$, which has a support of the form $E \cup P$. Let $B' = \bigcup \{ f[\omega] \cap A_\alpha \mid E \cap A_\alpha = \emptyset\}$, which is infinite since $E$ is short. Then either $B' - \bigcup P$ has at least two elements, or $B' \cap C$ has at least two elements for some $C \in P$. In either case, we can swap two elements in $f[\omega]$ while fixing $f$---contradiction. On the other hand, if $B \cap A_\alpha$ is infinite, this intersection is countable in $\Mrp$ so $\Mrp \models \aleph(B) \neq \omega$.\end{proof}

\begin{lemma}
$\Mrp \not\models $ RP$^\sim$.
\end{lemma}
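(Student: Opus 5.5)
The plan is to find a sentence $\varphi$ that holds in $\Mrp$ but fails in every supertransitive $t \in \Mrp$, using narrowness as the distinguishing feature. By Lemma \ref{lemma:narrow<->D-fin}, the infinite narrow sets of urelements in $\Mrp$ are exactly the infinite $B$ with $\aleph(B)=\omega$. Take $\varphi$ to be the conjunction of enough of ZFU (Pairing, Union, products, Powerset) with
$$\forall B \subseteq \A\ \exists C \subseteq \A - B\ (C \text{ is infinite} \land \aleph(C) = \omega).$$
This holds in $\Mrp$: given $B \subseteq D_\alpha$, the set $\{a_{\beta,0} \mid \alpha \le \beta < \alpha+\omega\}$ lies in $\Mrp$, is infinite and narrow, and is disjoint from $B$.

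First I will check that for a supertransitive $t$ satisfying these fragments, ``$C$ is infinite and $\aleph(C)=\omega$'' is absolute between $t$ and $\Mrp$ for sets of urelements $C \in t$.
\begin{itemize}
\item Since $\omega\times C \in t$, every injection $f\colon\omega\to C$ in $\Mrp$ is a subset of a member of $t$, so $f \in t$.
\item The same argument applies to finite injections, which gives absoluteness of ``infinite''.
\end{itemize}
Hence $\varphi^t$ says that, relative to any $B\in t$, the set $t$ contains a genuinely infinite narrow set disjoint from $B$. Supertransitivity also gives that every subset of a narrow $C \in t$ is in $t$.

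Next, suppose for reductio that $t$ is such a set. Fix a support $E\cup P$ of $t$, where $E$ is short and $P$ is a finite family of pairwise disjoint narrow subsets of some $D_\gamma$, and note $\ker(t)\subseteq D_\gamma$. Apply $\varphi^t$ repeatedly, in the manner of the proof that $\Mcol \not\models$ RP$^-$. Take $B$ to be the union of those finitely many pieces already in $t$ (the members of $P$ that lie in $t$, together with previously produced sets). This yields pairwise disjoint infinite narrow sets $C_0,\dots,C_m \in t$, where $m$ exceeds the number of members of $P \cup \{E\}$ not in $t$. Each $C_i$ meets infinitely many rows, or meets some row in a set not contained in $E \cup \bigcup P$. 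Using Lemma \ref{lemma:NarrowSwap}-style swaps inside rows avoiding $E$, pick $\pi\in\fix_\G(E\cup P)$ exchanging a narrow piece of $C_i$ lying in a single piece $Q$ of the partition with the complementary part of $Q$. Then $\pi t = t$, so $\pi C_i\in t$. Closure of $t$ under symmetric difference, together with supertransitivity (subsets of $C_i$ are in $t$), then puts $Q$ itself, or a set containing a cofinite part of some row, into $t$.

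The main obstacle is this last step: turning the pieces into a contradiction, as in the Claim of the $\Mcol$ proof, that some $C_i\cap Q$ is neither ``small'' nor ``co-small'' in $Q$. If some $Q \in P$ not in $t$ is forced into $t$, I contradict the count $m$ by pigeonhole, exactly as for $\Mcol$. If instead the relevant piece is $A_\beta - \bigcup P$ for a row $\beta$ missing $E$, I get a non-narrow set into which $\omega$ injects, placed inside a set $t$ where the $C_i$ had to avoid it. I would first try to choose the $C_i$ avoiding previously generated sets, so that pigeonhole over the finitely many pieces of $P\cup\{E\}$ gives two disjoint $C_i, C_j$ that are both co-narrow in the same piece. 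That is impossible, since two disjoint sets cannot both be co-narrow in one infinite piece.
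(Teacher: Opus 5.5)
Your sentence and your absoluteness check are fine. This is the paper's $\varphi$, and $\omega\times C\in t$ is exactly why supertransitivity transfers $\aleph(C)=\omega$ from $t$ to $\Mrp$. The gap is the step you flag as the main obstacle: no contradiction is actually derived, and the routes you sketch do not yield one.

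\begin{itemize}
\item For points of $C_i$ in the leftover region outside $E\cup\bigcup P$, your exchange is unavailable. No $\pi\in\G$ can swap a narrow set with a set meeting some row infinitely, since the image of that row would not be short.
\item $\G$ preserves narrowness, and finite Boolean combinations of narrow sets are narrow. So you can never produce ``a set containing a cofinite part of some row'' from the $C_i$. Even if you could, a non-narrow set in $t$ is not contradictory by itself.
\item ``Co-narrow in a piece'' is vacuous for narrow pieces. Also, if some $Q\in P-t$ really were forced into $t$, that alone would be the contradiction, with no counting needed.
\item There is a bookkeeping error. A $Q\in P$ with $Q\not\subseteq t$ is never in $t$ and can never be forced into it; you would only obtain something like $Q-E$. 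Meanwhile your $C_i$ are free to lie inside $Q\cap t$ whenever that set belongs to $t$. The paper avoids this by passing to $P=\{B'\cap t\mid B'\in P'\}$ and applying $\varphi^t$ to $\bigcup(P\cap t)$.
\end{itemize}

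What is missing is the paper's Claim, which uses $\ker(t)\subseteq D_\gamma$. Suppose an infinite narrow $C\in t$ disjoint from $\bigcup(P\cap t)$ contained some $a\notin E\cup\bigcup P'$. Pick an urelement $b\notin t\cup E\cup\bigcup P'$. The transposition $\pi$ of $a$ and $b$ lies in $\fix_\G(E\cup P')$, so $b=\pi a\in\pi t=t$, which is absurd. Hence $C-E\subseteq\bigcup(P-t)$, and since $C\cap E$ is finite, $C$ meets some $Q\in P-t$ in an infinite set.

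Supertransitivity then finishes the argument with this single $C$:
\begin{itemize}
\item $C\cap Q\in t$, so $Q-C$ must be infinite; otherwise $Q\in t$, because $Q\subseteq t$.
\item Swap $(C\cap Q)-E$ with $Q-(C\cup E)$. These are infinite narrow subsets of one member of $P'$, so the swap is in $\fix_\G(E\cup P')$. This gives $Q-(C\cup E)\in t$.
\item Since $Q\cap E$ is a finite subset of $t$, it follows that $Q\in t$, a contradiction.
\end{itemize}
So no iteration of $\varphi^t$ and no pigeonhole is needed. The $\Mcol$-style counting was required there only because, without supertransitivity, $A\cap E$ need not belong to $t$.
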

\begin{proof}
Let $\varphi$ be the sentence
$$\forall B \subseteq \A \exists A \subseteq \A - B ( \aleph (A) =\omega),$$
which holds in $\Mrp$ because for every $\alpha < \omega^2$, $\Mrp \models \aleph (B_{\alpha, 0}) = \omega$ by Lemma \ref{lemma:narrow<->D-fin}. Suppose \textit{for reductio} that $t \models \varphi$ for some supertransitive $t \in \Mrp$ which satisfies enough amount of ZFU. Let $E \cup P'$ be a support of $t$, and let $P = \{ B' \cap t \mid B' \in P'\}$. Then there is some set of urelements $A \in t$ such that $A \cap \bigcup (P \cap t) = \emptyset$ and $t \models \aleph(A) = \omega$. Since $t$ is supertransitive, $\Mrp \models \aleph(A) = \omega$ and hence $A$ is an infinite narrow set.

\begin{claim}
There is some $B \in P - t$ such that $A \cap B$ is infinite.
\end{claim}
\begin{claimproof}
Otherwise, $A - \bigcup P$ is infinite and narrow so there is some $a \in A - (\bigcup P \cup E)$. Since $\bigcup P' \subseteq D_\alpha$ for some $\alpha < \omega^2$, we can find some $b \notin t \cup  E \cup \bigcup P'$. Swapping $a$ and $b$ then yields a contradiction since this permutation is in $\fix_\G(E \cup P')$. \end{claimproof}

Fix such $B$.  $B - A$ must be infinite and narrow as $B \cap A \in t$. Furthermore, $E$ is short so $B \cap E$ is finite and hence in $t$. So let $\pi \in \G$ swap $B - (A \cup E)$ and $(B \cap A) - E$. Since $\pi \in \fix_\G (E \cup P')$, it follows that $B - (A \cup E) \in t$, which implies $B \in t$---contradiction.\end{proof}

It remains to show that RP$^-$ holds in $\Mrp$. Let $A = T_0 \cup A_1$. We first work inside $\Mrp$ and define a permutation model within $V(A)$. Then we show that there is an isomorphism from $\Mrp$ to this permutation model that pointwise fixes $V(T_0)$. To begin with, since $A_1$ is countable in $\Mrp$, we can partition $A_1$ as $\bigsqcup_{0<\alpha<\omega^2} A^*_\alpha$, where each $A^*_\alpha = \{a^*_{\alpha, n} \mid n < \omega\}$. For each $\alpha < \omega^2$, let $D^*_\alpha = \bigcup_{\beta < \alpha}A^*_\beta$. Now we make the following definitions in $V(A)$.

\begin{enumerate}
\item $E \subseteq \A$ is \textit{short}$^*$ if $\{\alpha < \omega^2 \mid E \cap A^*_\alpha \neq \emptyset\}$ is finite and $E \cap B_0$ is finite.

\item  $B \subseteq \A$ is \textit{narrow}$^*$ if for every $\alpha < \omega^2$, $B \cap A^*_\alpha$ is finite. 

\item $J^* = \{ X  \mid  X\in [P(D_\alpha \cup B_0)]^{<\omega} \text{ for some } \alpha < \omega^2 \text{ and } \forall B \in X (B \text{ is narrow}^*) \}.$

\item $I^* =\{ E \cup X \mid E \text{ is short}^* \land X \in J^* \}$.

\item $\I^* = \{ C \subseteq \A \mid \exists \alpha < \omega^2 (C \subseteq D_\alpha \cup B_0)\}$.

\item A permutation $\pi$ of $A$ is \textit{permissible}$^*$ if 

\subitem (i) $\pi A^*_\alpha$ is short for every $\alpha < \omega^2$; and

\subitem (ii) $\pi \I^* = \I^*$.

\item $\G^* = \{\pi \mid \pi \text{ and } \pi^{-1} \text{ are permissible}^* \text{ permutations of } A \}$.

\item $\F^* = \{H \subseteq \G^* \mid \exists S \in I^* (\fix_{\G^*}(S) \subseteq H)\}$

\item $M^* = \{ x \mid x \text{ is hereditarily $\F^*$-symmetric } \land \ker(x) \in \I^*\}$.\\
\end{enumerate}
In $U$, define a function $\sigma'$ on $\A$ as follows
\begin{equation*}
\sigma' a_{\alpha,n} = 
\begin{cases*}
a_{\alpha, n}   & $a_{\alpha,n} \in T_0$ \\
a^*_{\alpha, n-1}    & $ 0 <\alpha < \omega$ and $n > 0$\\
a^*_{\alpha, n} & $\alpha \geq \omega$
\end{cases*}
\end{equation*}

\noindent The canonical extension of $\sigma'$ is an isomorphism from $U$ to $V(A)$ (in the sense of $U$) that pointwise fixes $T_0$.  Let $\sigma = \sigma' \restriction \Mrp$.

\begin{lemma}
$\sigma$ is an isomorphism from $\Mrp$ to $M^*$. 
\end{lemma}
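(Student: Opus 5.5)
We will show that $\sigma$ is an $\in$-isomorphism onto its image and that this image is exactly $M^*$. We compare $\Mrp$, the small-kernel model built in $U$ from $(\G,\F,\I)$, with the model that $(\G^*,\F^*,\I^*)$ generate, and we do it in two stages. First we transport along $\sigma'$ computed in $U$. Then we check that computing $M^*$ inside $\Mrp$ rather than in $U$ changes nothing.

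\emph{Step 1: transport in $U$.} Read $B_0$ as $B_{0,0}$, so $T_0=A_0\cup B_0$. Since $\sigma'$ is a bijection from $\A$ onto $A$ fixing $T_0$, its canonical extension is an $\in$-isomorphism $U\to V(A)^U$. We check directly from the definitions of $\sigma'$ that:
\begin{itemize}
\item $\sigma'[A_\alpha]$ is $A^*_\alpha$ for $\alpha\ge\omega$, and is $A^*_\alpha\cup\{a_{\alpha,0}\}$ for $0<\alpha<\omega$, with $a_{\alpha,0}\in B_0$;
\item short sets go to short$^*$ sets, since a short set meets $B_0$ in only finitely many points;
\item narrow sets go to narrow$^*$ sets, $J$ goes to $J^*$, $I$ goes to $I^*$, and $\I$ goes to $\I^*$.
\end{itemize}
Consequently $\pi\mapsto\sigma'\pi\sigma'^{-1}$ maps $\G$ onto the group $\G^*_U$ of permissible$^*$ permutations of $A$ as computed in $U$. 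It sends $\fix_\G(S)$ to $\fix_{\G^*_U}(\sigma' S)$, and hence $\F$ onto the filter $\F^*_U$ that $I^*$ generates over $\G^*_U$. Since $\sym_{\G^*_U}(\sigma' x)=\sigma'\sym_\G(x)\sigma'^{-1}$ and $\ker(\sigma' x)=\sigma'\ker(x)$, an induction on rank shows that $\sigma$ maps $\Mrp$ isomorphically onto
$$M^*_U=\{x\in V(A)\mid x \text{ is hereditarily } \F^*_U\text{-symmetric}\wedge\ker(x)\in\I^*\}.$$

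\emph{Step 2: $M^*_U=M^*$, where $M^*$ is computed in $\Mrp$.} The parameters $A$, the partition $\langle A^*_\alpha\rangle$, $B_0$, $I^*$, $J^*$ and $\I^*$ all lie in $\Mrp$. The notions short$^*$, narrow$^*$ and $\I^*$ concern only finiteness and row-membership, which are absolute between $\Mrp$ and $U$. Hence $(\G^*)^{\Mrp}=\G^*_U\cap\Mrp$, and for $S\in I^*$ we have $\fix_{(\G^*)^{\Mrp}}(S)=\fix_{\G^*_U}(S)\cap\Mrp$. We argue by $\in$-induction.
\begin{itemize}
\item \emph{$M^*_U\subseteq M^*$.} Let $x\in M^*_U$. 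Then $x\in\Mrp$, because $x=\sigma(y)$ for some $y\in\Mrp$ and $\sigma'$ is definable in $U$ from a well-ordering, so it respects symmetry. If $\fix_{\G^*_U}(S)\subseteq\sym(x)$, intersecting with $\Mrp$ gives the Mrp-version of the same inclusion. So $x\in M^*$.
\item \emph{$M^*\subseteq M^*_U$.} Let $x\in M^*$ with $\fix_{(\G^*)^{\Mrp}}(S)\subseteq\sym(x)$, and let $\pi\in\fix_{\G^*_U}(S)$. We need $\pi x=x$.
\end{itemize}

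The reverse inclusion is the main obstacle. Here $\pi$ need not lie in $\Mrp$, and we must replace it by some $\pi'\in\Mrp$ that still fixes $S$ and agrees with $\pi$ on $\ker(x)$; this is enough because $\pi x$ depends only on $\pi\restriction\ker(x)$. We first enlarge $S$ by a support of $x$ of the form $E\cup P$ in the sense of $\Mrp$, transported by $\sigma'$. By permissibility$^*$, $\pi$ moves each row $A^*_\alpha$ into finitely many rows, and $\ker(x)\subseteq D_\beta\cup B_0$. We therefore try to write $\pi\restriction\ker(x)$, modulo $\fix(S)$, as a composite of two kinds of pieces:
\begin{itemize}
\item finitely many whole-row swaps, which are definable from $A^*$ and therefore lie in $\Mrp$;
\item swaps of disjoint narrow$^*$ sets, in the style of Lemma~\ref{lemma:NarrowSwap} and the proof of Lemma~\ref{lemma:TalphaTail}; such a swap moves only finitely many points in each row, so it has a support in $I$ and lies in $\Mrp$.
\end{itemize}
If this decomposition is not available in general, the fallback is to verify that the Mrp-orbit of $x$ under $\fix(S)$ already equals its $U$-orbit, again by realizing each required partial bijection through such row-swaps and narrow swaps. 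Once $M^*_U=M^*$ is established, Step 1 shows that $\sigma$ is an isomorphism from $\Mrp$ onto $M^*$.
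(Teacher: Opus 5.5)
\textbf{There is a gap at the step you yourself flag as the main obstacle.}

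Your Step 1 is essentially the paper's entire argument: short and narrow sets go to short$^*$ and narrow$^*$ sets, conjugation by $\sigma'$ carries $\G,\F,\I$ to the starred objects, and a rank induction finishes. The paper simply writes $\sigma\G=\G^*$ and never says where $\G^*$ is computed.

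You are right that this matters. Computed in $\Mrp$, $\G^*$ is $\G^*_U\cap\Mrp$, and this is a proper subgroup. For example, take the involution of $B_0$ that exchanges $a_{2k+1,0}$ and $a_{2k+2,0}$ for all $k$ and is the identity on $A_0\cup A_1$. It lies in $\G^*_U$ but is not $\F$-symmetric.

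However, the inclusion $M^*\subseteq M^*_U$, which is the whole content of Step 2, is never proved. You give a plan and a fallback, and the fallback only restates the goal. The reduction you do state also cannot work in general. You want some $\pi'\in\Mrp$ fixing $S$ with $\pi'\restriction\ker(x)=\pi\restriction\ker(x)$. But $B_0\in M^*$ (supported by $\{B_0\}$). If $B_0\subseteq\ker(x)$ and $\pi$ is the involution above, such a $\pi'$ would give $\pi\restriction B_0=\pi'\restriction B_0\in\Mrp$, which is false.

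The row-swap/narrow-swap decomposition also aims at the wrong thing. Every $A^*_\alpha$ lies inside $A_1$, and $A_0\cup A_1$ is short. So every permutation of $A$ that is the identity on $B_0$ is already in $\Mrp$, supported by $A_0\cup A_1$ pointwise together with $\{B_0\}$. The only obstruction is the action on $B_0$.

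A smaller point: ``$\sigma'$ is definable in $U$ from a well-ordering'' does not justify $M^*_U\subseteq\Mrp$. What works is a direct support: if $E\cup X$ supports $y$, then the short set $A_0\cup A_1\cup(E\cap B_0)$ together with $\{B_0\}\cup\{Q\cap B_0\mid Q\in X\}$ supports $\sigma'y$.

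\textbf{The missing idea.} Do not look for an $\Mrp$-surrogate of $\pi$. Instead use that $x\in M^*$ is itself in $\Mrp$, so it has a $\G$-support $E_0\cup X_0\in I$; use this support directly, without transporting it by $\sigma'$.

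First enlarge the $\F^*$-support $S^*$ of $x$ by adding:
\begin{itemize}
\item the finite set $E_0\cap B_0$;
\item the narrow$^*$ set $B_0$;
\item the sets $Q\cap B_0$ for $Q\in X_0$.
\end{itemize}

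Now let $\rho\in\G^*_U$ fix this enlarged support. Then $\rho$ preserves $B_0$, so $\rho=\rho_{B_0}\rho_{\mathrm{rest}}$, where:
\begin{itemize}
\item $\rho_{\mathrm{rest}}$ agrees with $\rho$ on $A_0\cup A_1$ and is the identity on $B_0$;
\item $\rho_{B_0}$ agrees with $\rho$ on $B_0$ and is the identity elsewhere.
\end{itemize}
The permutation $\rho_{\mathrm{rest}}$ is in $\Mrp$, is permissible$^*$, and fixes $S^*$. Hence it lies in $\fix_{\G^*}(S^*)$ and fixes $x$. The permutation $\rho_{B_0}$, extended by the identity to all of $\A$, lies in $\fix_\G(E_0\cup X_0)$, so it also fixes $x$. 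Therefore $\rho x=x$.

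Applying this to every element of $\tc(\{x\})$ gives $M^*\subseteq M^*_U$. Together with your Step 1, this proves the lemma.
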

\begin{proof}
 The key point, which is routine to verify, is that $\sigma$ maps every short set to a short$^*$ set and every narrow set to a narrow$^*$ set, and similarly for $\sigma^{-1}$. So it follows that $\sigma I = I^*$. Next, observe that $\sigma \G = \{\sigma \pi \sigma^{-1} \mid \pi \in \G\}$. So it follows that $\sigma \G \subseteq \G^*$; and if $\rho \in \G^*$, $\pi = \sigma^{-1} \rho \sigma \in \G$ and hence $\rho = \sigma \pi \sigma^{-1}$ is in $\sigma \G$. Thus, $\sigma \G = \G^*$. It is also clear that $\sigma \I = \I^*$, and from these facts it follows that $\sigma \F = \F^*$.

Now we show that $\sigma: \Mrp \to M^*$ by induction. Suppose that $x \in \Mrp$ and $\sigma x \subseteq M^*$. $\sym_\G(x) \in \F$ and $\ker(x) \in \I$, so $\sym_{\G^*}(\sigma x) = \sym_{\sigma\G}(\sigma x) \in \sigma \F = \F^*$ and $\ker(\sigma x) \in \I^*$. Thus, $\sigma x \in M^*$. By a similar induction, $\sigma^{-1} x \in \Mrp$ for every $x \in M^*$. Therefore, $\sigma$ is onto and hence an isomorphism. \end{proof}

\begin{lemma}\label{lemma:partialreflection}
$\Mrp \models$ RP$^-$.
\end{lemma}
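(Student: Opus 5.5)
We prove RP$^-$ in $\Mrp$ by reflecting a true formula into a set-sized copy of $\Mrp$. The plan is to show that $M^*$ is a transitive set of $\Mrp$ and that $\sigma$ fixes the relevant parameters, and then to transport the conclusion through the isomorphism $\sigma$ and Corollary \ref{cor:V(A)modelsRP}.

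Suppose $\Mrp \models \varphi(x)$. By applying a suitable $\pi \in \G$, we may first arrange that $\ker(x) \subseteq T_0$. Since $\ker(x) \subseteq D_\alpha$ for some $\alpha<\omega^2$, it suffices to choose a permutation in $\G$ that moves $D_\alpha$ into $T_0$ while preserving $\I$ and shortness. The shortness requirement applies to images of rows: $D_\alpha$ is a finite union of blocks of $\omega$ many rows each, and these blocks must be squeezed into $A_0 \cup B_{0,0}$. This squeezing is problematic, because the image of a full row $A_\beta$ must be short. Rather than moving $x$, the better route is to redo the construction of $M^*$ relative to any given $\alpha$. Concretely, we would replace $T_0$ by $T'=D_\alpha\cup T_\alpha$ (or at least by $D_{\alpha}$ together with the $B_{\alpha,0}$-type column), replace $A_1$ by $A_{\alpha}$ or another row outside $T'$, and define $\sigma_\alpha$ fixing $V(T')$ pointwise, by the same recipe as $\sigma'$. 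The lemma just proved then generalizes verbatim: $\sigma_\alpha:\Mrp\to M^*_\alpha$ is an isomorphism fixing $x$. I expect this uniform verification of the short/narrow correspondence to be the main bookkeeping obstacle.

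Next we show that $M^*_\alpha$ is a set in $\Mrp$. The structures $\G^*,\F^*,\I^*$ are defined inside $V(A)^{\Mrp}$ from parameters with kernel in $A$. So $M^*_\alpha$ is a definable subclass of $V(A)$ in $\Mrp$, all of whose members have kernel in $A$, and it is a transitive class. By Corollary \ref{cor:V(A)modelsRP}, $V(A)^{\Mrp}\models$ RP. Moreover, $V(A)^{\Mrp}$ is $V(A)$ computed in $\Mrp$, an inner model of ZFU whose urelements form the set $A$.

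Now apply this. Since $\Mrp\models\varphi(x)$ and $\sigma_\alpha x = x$, we get $M^*_\alpha\models\varphi(x)$, that is, $V(A)\models \varphi^{M^*_\alpha}(x)$, where $M^*_\alpha$ is a class definable in $V(A)$ from parameters. Apply RP inside $V(A)$ to the formula $\varphi^{M^*_\alpha}$, together with the formulas defining $M^*_\alpha$ and its membership. This yields a transitive set $t\in V(A)$ containing $x$ and the parameters, such that $\varphi^{M^*_\alpha}$ and the definition of $M^*_\alpha$ are absolute between $t$ and $V(A)$. Alternatively, and more simply, we can argue in $V(A)$, which has $\A$ as a set, that every class is a set: $M^*_\alpha$ is a subclass of $V(A)$, but it is a proper class, so this alone fails. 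Instead we take $s = M^*_\alpha\cap V_\beta(A)$ for some $\beta$ given by reflection in $V(A)$ (Lévy–Montague style via RP in $V(A)$), chosen so that $\varphi^{s}(x)$ holds. Then $s\in\Mrp$ is transitive, $x\in s$, and $\varphi^s(x)$ holds. This gives RP$^-$ in $\Mrp$. The subtle point is justifying that the reflected level $s$ is transitive and computes $\varphi$ correctly; relativized RP in $V(A)$ handles this, since RP there reflects any formula, including relativized ones.
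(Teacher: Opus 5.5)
\textbf{There is a genuine gap at the first step.} Your opening sentence (move $x$ by some $\pi\in\G$ so that $\ker(x)\subseteq T_0$) was right. The replacement you adopted instead does not work.

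\emph{Why the relativized construction fails.} You propose a construction over $T'=D_\alpha\cup T_\alpha$. For $\alpha\ge\omega$, the set $D_\alpha$ is not an element of $\Mrp$. To see this, take any support $E\cup P$, with $E$ short and $P$ a finite family of narrow sets. Choose $n<\alpha$ and $\beta\ge\alpha$ with $A_n\cap E=A_\beta\cap E=\emptyset$. Then choose $a\in A_n-\bigcup P$ and $b\in A_\beta-\bigcup P$, which is possible since $\bigcup P$ is narrow. The transposition $(a\;b)$ lies in $\fix_\G(E\cup P)$ but moves $D_\alpha$. Hence the row sequence $\langle A_\beta\mid\beta<\alpha\rangle$ is not in $\Mrp$ either.

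This has three consequences:
\begin{enumerate}
\item $T'$ is not a set of urelements of $\Mrp$, so $V(T'\cup\cdots)^{\Mrp}$ is not an inner model to which Corollary \ref{cor:V(A)modelsRP} applies.
\item The analogues of $\G^*,\F^*,\I^*$ need the row structure as parameters, so they cannot be defined inside $\Mrp$.
\item Therefore $M^*_\alpha$ is not a definable class of $\Mrp$, and the ``verbatim generalization'' of the isomorphism lemma is unavailable.
\end{enumerate}
Restricting to finite $\alpha$ does not rescue the approach. Elements such as $A_\omega\in\Mrp$ have kernels lying in no $D_n$ with $n<\omega$.

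\emph{The missing observation.} You never need to move $D_\alpha$, only $\ker(x)$. Lemma \ref{lemma:TalphaTail} with $\alpha=0$ (so $D_0=\emptyset$) already gives $\pi\in\G$ with $\pi\ker(x)\subseteq T_0$. The squeezing you worried about is harmless, because a set of urelements in $\Mrp$ meets only finitely many rows in an infinite set. Those finitely many rows are sent into $A_0$ by permutations that move only finitely many rows, so images of rows stay short. The narrow remainder is moved into $B_{0,0}$ via Lemma \ref{lemma:NarrowSwap}.

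\emph{How the paper finishes.} The given $\sigma$ fixes $T_0$ pointwise, hence fixes $\pi x$, so $M^*\models\varphi(\pi x)$. The paper then reflects inside $V(T_0\cup A_1)$ essentially as in your last paragraph. Reflecting the defining formula of $M^*$ together with $\varphi^{M^*}$, as you suggest, is the right way to ensure $\varphi^{t^*\cap M^*}(\pi x)$. Finally, pull back by the automorphism $\pi^{-1}$ to obtain a transitive $t\in\Mrp$ with $x\in t$ and $\varphi^t(x)$.
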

\begin{proof}
Suppose that $\Mrp \models \varphi(x)$ from some $x \in \Mrp$. By Lemma \ref{lemma:TalphaTail}, there is a $\pi \in \G$ such that $ ker( \pi x) \subseteq T_0 \subseteq A$. So $\Mrp \models \varphi(\pi x)$. Since $\sigma$ defined above is an isomorphism from $\Mrp$ to $M^*$ that pointwise fixes $T_0$, we have $M^* \models \varphi(\pi x)$. In $\Mrp$, $M^*$ is a definable transitive class in $V(A)$, so by RP$^-$ in $V(A)$ (Corollary \ref{cor:V(A)modelsRP}) there is a transitive set $t^* \in V(A)$ such that $t^* \cap M^* \models \varphi (\pi x)$, where $t^* \cap M^*$ is transitive and contains $\pi x$. Let $t = \pi^{-1} (t^* \cap M^*)$. Then $t$ is a transitive set in $\Mrp$ containing $x$ such that $t \models \varphi (x)$.\end{proof}

\begin{remark}
A construction similar to the proof of Lemma \ref{lemma:partialreflection} shows that $\Mcol$ satisfies a weaker reflection principle, namely that $\phi(x)$ implies $\phi^N(x)$ for some (not necessarily transitive) model $(N, \in, \A')$ where $\tc(\{x\}) \subset N.$ The main distinction here is that urelements of $N$ need not be urelements in $\Mcol.$
\end{remark}

\subsection{RP$^\sim$ $\land$ Tail $\land$ SVC $\not\to$ Collection}
We work in some $U \models \ZFCU$ in which $$\A = \bigsqcup_{n, m, k <\omega} A_{n, m, k},$$ where each $A_{n, m, k}$ has size $\omega_m$ and is enumerated as $\{a_{n, m, k, \alpha} \mid \alpha < \omega_m\}$. Let $A_{n, m} = \bigcup_{k< \omega} A_{n, m, k}$ for each $n, m < \omega$. That is, we partition $\A$ into $(\omega \times \omega)$-many matrices of the form $A_{n, m}$, each of which has $\omega$-many rows of size $\omega_m$.
\begin{enumerate}
\item $E \subseteq \A$ is \textit{locally small} if $|E \cap A_{n, m}| < \omega_m$ for each $n, m <\omega$.

\item A permutation $\pi$ of $\A$ is \textit{row-preserving} if $\pi A_{n, m, k} = A_{n, m, k}$ for every $n, m, k < \omega$; $\pi$ is \textit{uniform} if for every $i, j, k, m, n < \omega$ and $\alpha, \beta < \omega_m$, if $\pi a_{n, m, k ,\alpha} = a_{n, m, k ,\beta}$, $\pi a_{i, m, j ,\alpha} = a_{i, m, j ,\beta}$.

\item $\G = \{ \pi \mid \pi \text{ is a uniform row-preserving permutation of } \A\}$.

\item $\F = \{ H \subseteq \G \mid \exists E \subseteq \A (E \text{ is locally small } \land \fix_\G(E) \subseteq H)\}$

\item For each $n < \omega$, $T_n = \bigcup_{n \leq m} A_{n, m}$.

\item $\I = \{B \subseteq \A \mid \exists n < \omega (B \subseteq \bigcup_{i\leq n} T_i)\}.$

\item $\Mtail = \{x \mid x \text{ is hereditarily $\F$-symmetric} \land \ker(x) \in \I\}.$

\end{enumerate}

\begin{lemma}
$\Mtail \models \ZFU$.
\end{lemma}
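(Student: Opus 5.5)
The plan is to apply Theorem \ref{thm:FThmofSKPM}(1). To do so we have to check two things: that $\F$ is a normal filter on $\G$, and that $\I$ is a $\G$-flexible ideal on $\A$. Both are verified in $U$.

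\emph{$\F$ is a normal filter.}
\begin{itemize}
\item Closure under supergroups is immediate from the definition.
\item For finite intersections, use $\fix_\G(E)\cap\fix_\G(E')=\fix_\G(E\cup E')$. A finite union of locally small sets is locally small, since each $\omega_m$ is an infinite cardinal and $|E\cap A_{n,m}|,|E'\cap A_{n,m}|<\omega_m$ gives $|(E\cup E')\cap A_{n,m}|<\omega_m$.
\item For each urelement $a$, $\{a\}$ is locally small and $\fix_\G(\{a\})\subseteq\sym_\G(a)$.
\item For conjugation, use $\pi\fix_\G(E)\pi^{-1}=\fix_\G(\pi E)$. Every $\pi\in\G$ is row-preserving, so $\pi$ maps each $A_{n,m}$ onto itself and $|\pi E\cap A_{n,m}|=|E\cap A_{n,m}|$. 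Hence $\pi E$ is again locally small.
\end{itemize}
I should also check that $\G$ is a group. Compositions and inverses of row-preserving permutations are row-preserving. Uniformity, which says the action on the index $\alpha$ depends only on $m$ (a family of permutations $\langle f_m\mid m<\omega\rangle$ with $f_m\in\sym(\omega_m)$), is clearly preserved under composition and inversion.

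\emph{$\I$ is $\G$-flexible.}
\begin{itemize}
\item Closure under subsets and finite unions is clear, because the sets $\bigcup_{i\le n}T_i$ increase with $n$.
\item $\sym_\G(\I)=\G$, because each $\pi\in\G$ preserves every $A_{n,m}$ and hence every $T_n$.
\end{itemize}
The main obstacle is condition (iii). Suppose $A\cup\{a\}\in\I$ with $a\notin A$, and $a=a_{n,m,k,\alpha}$. We need some $\pi\in\fix_\G(A)$ with $\pi a\neq a$.

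Because of uniformity, any $\pi$ moving $a$ moves the $\alpha$-th element of \emph{every} row of width $\omega_m$. So we must pick $\beta<\omega_m$ with $\beta\neq\alpha$ such that $A$ contains no urelement of index $\alpha$ or $\beta$ in any row of width $\omega_m$. Then the uniform row-preserving transposition swapping indices $\alpha$ and $\beta$ in all rows of width $\omega_m$ fixes $A$ pointwise and moves $a$.

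This cannot work for an arbitrary $A$. For example, $A$ could contain $a_{n',m,k',\alpha}$ for some other $n'$ without containing $a$. Then every $\pi\in\fix_\G(A)$ fixes the index $\alpha$ at width $m$, and hence fixes $a$.

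So I expect that the flexibility hypothesis in Theorem \ref{thm:FThmofSKPM} is used only in the Replacement step, and that it can be weakened there. In that step we only need it for $A=\ker(w)\cup\ker(p)$ where $w,p\in M$. The argument there needs some $\pi\in\fix(A)$ with $\pi a\notin\ker(y)$. For that it suffices to find $\pi$ fixing a locally small support of $w$ and $p$ (together with their kernels) and moving $a$. Note that $\ker(w)$ itself need not be locally small, since a whole $T_i$ can be a kernel.

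I would therefore rerun the proof of Theorem \ref{thm:FThmofSKPM}(1) directly. Take $v$ as in that proof; its symmetry is clear. For the kernel bound, suppose $a\in\ker(v)-(\ker(w)\cup\ker(p))$. Since $\ker(v)\subseteq\bigcup_{x\in w}\ker(y_x)$, choose $y$ with $a\in\ker(y)$. Take a locally small $E$ supporting $w$ and $p$. Since $|E\cap A_{n,m}|<\omega_m$, the set of indices $\gamma<\omega_m$ occurring in $E$ at width $m$ has size $<\omega_m$ (this uses that $\omega_m$ is regular and only countably many matrices have width $m$). Likewise, $\ker(y)$ is in $\I$ and is symmetric, which lets us choose $\beta$ outside the indices relevant to $\ker(y)\cap A_{n,m,k}$. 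The uniform swap of $\alpha$ and $\beta$ then moves $a$ out of $\ker(y)$ while preserving $w$ and $p$, and it yields $\pi y\neq y$ with $\varphi(x,\pi y,p)$, a contradiction.

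The delicate point, which I expect to be the main obstacle, is the case where $\alpha$ itself occurs as an index in $E$. I would handle it by enlarging $E$ first and then arguing as in the flexibility check above, using row-wise counting. The remaining axioms (Pairing, Union, Powerset, Extensionality, Foundation, Infinity) hold exactly as in Theorem \ref{thm:FThmofSKPM}(1).
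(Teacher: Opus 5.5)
\textbf{There is a genuine gap.} You are right that $\I$ is not $\G$-flexible (the paper says so too), and your check that $\F$ is a normal filter is fine. The workaround in the Replacement step, however, cannot work with elements of $\G$ alone. Every $\pi\in\G$ is row-preserving. So if $a\in A_{n,m,k}\subseteq\ker(y)$, then $\pi a\in\ker(y)$ for every $\pi\in\G$, and no uniform swap of $\alpha$ with some $\beta$ can move $a$ out of $\ker(y)$. This case really occurs: kernels in $\I$ may contain whole rows, for instance when $y=A_{n,m,k}$ or $y=T_i$.

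This is not a technicality. The set $\{T_i\mid i<\omega\}$ is fixed by every element of $\G$, and each $T_i$ lies in $\Mtail$, yet its kernel $\bigcup_i T_i$ is not in $\I$. To rule out that $i\mapsto T_i$ is definable in $\Mtail$, you need an automorphism of $\Mtail$ that moves some $T_i$, and $\G$ contains none.

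There are two smaller slips. First, fixing a support of $w$ and $p$ only gives $\pi w=w$, not $\pi x=x$, so $\varphi(x,\pi y,p)$ does not follow. Second, for $m=0$ your index count fails, because countably many finite sets of indices can cover $\omega$.

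\textbf{The missing idea} is that $\Mtail$ has automorphisms outside $\G$. Take a transposition $\tau=(a\ b)$. It is hereditarily $\F$-symmetric, since it is supported by the locally small set $\{a,b\}$. The identity $\sigma\tau x=(\sigma\tau\sigma^{-1})(\sigma x)$ then shows that $\tau$ maps the full permutation model $W$ of $\G,\F$ onto itself. If $a,b\in\bigcup_i T_i$, then $\tau$ also preserves $\I$, so it is an automorphism of $\Mtail$.

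With this, the Replacement step goes through. Choose $b\in\bigcup_i T_i$ outside $\ker(w)\cup\ker(p)\cup\ker(y)$. Then $\tau$ pointwise fixes $\ker(x)\cup\ker(p)$, so $\tau x=x$ and $\tau p=p$. Hence $\varphi^{\Mtail}(x,\tau y,p)$ holds, and $b\in\ker(\tau y)-\ker(y)$ contradicts uniqueness.

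The paper packages exactly this argument. It forms $W$ and notes that $\I\in W$ because $\sym_\G(A_{n,m,k})=\G$. Inside $W$ it takes the group $\G'$ of all $\I$-preserving permutations with the trivial filter, and observes that $\I$ is $\G'$-flexible via transpositions. It then applies Theorem \ref{thm:FThmofSKPM}(1) inside $W$, where the resulting small-kernel model is exactly $\Mtail$.
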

\begin{proof}
$\I$ is not $\G$-flexible so Theorem \ref{thm:FThmofSKPM} does not directly apply. But as $\F$ is $\G$-normal, we can first consider the resulted permutation model $W$. Since $\sym_\G(A_{n, m, k}) = \G$ for each $n, m, k < \omega$, the whole structure of $\A$ is preserved in $W$ and hence $\I$ is in $W$. In $W$, let $\G'$ be the group of permutations such that $\pi \in \G'$ if and only if $\pi \I = \I$ and $\pi^{-1} \I = \I$; and let $\F'$ be the trivial normal filter that contains all subgroups of $\G'$. $\I$ is $\G'$-flexible because every permutation that swaps two urelements is in $\G'$. In $W$, define $M = \{ x \mid x \text{ is hereditarily } \F'\text{-symmetric} \land \ker(x) \in \I\}$,  which satisfies $\ZFU$ by Theorem \ref{thm:FThmofSKPM}, and $M = \Mtail$.\end{proof}
By standard arguments of permutation models, each $A_{n, m, k}$ is non-well-orderable in $\Mtail$ and $\Mtail \models \aleph(A_{n, m, k}) = \omega_m$.
\begin{lemma}
$\Mtail \not\models$ Collection.
\end{lemma}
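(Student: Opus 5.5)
The plan is to derive the failure of Collection from results already proved rather than by a direct counterexample. By Lemma \ref{Lemma:Tail+Col->Dup+}, Tail $\land$ Collection implies Dup$^+$, and Dup$^+$ trivially implies Dup. So it suffices to show two things: $\Mtail \models$ Tail, and $\Mtail \models \neg$Dup. The guiding observation is that rows of ``type $m$'' (rows of size $\omega_m$) occur only in the matrices $A_{i,m}$ with $i \leq m$. Moreover, $A_{i,m} \subseteq T_i$ is available only when $i \leq m$. Hence for a given $m$, only finitely many matrices carry type-$m$ rows.

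The first step is a structural lemma about injections in $\Mtail$ between sets of urelements. Let $f \in \Mtail$ have a locally small support $E$. A uniform row-preserving $\pi \in \fix_\G(E)$ can move the coordinate $\alpha$ of any $a_{n,m,k,\alpha} \notin E$ to almost any other $\beta < \omega_m$, and it does so simultaneously in all rows of type $m$. I would use this to show that, off a locally small set, $f$ maps $a_{n,m,k,\alpha}$ to some $a_{n',m,k',\alpha}$, i.e.\ it preserves the type $m$ and the coordinate. Indeed, if $f(a)$ lay in a row of different type, or had a different coordinate, then one could fix $f(a)$ (or $a$) while moving the other, contradicting $\pi f = f$.

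With this lemma, Tail follows. Given $A \in \Mtail$, take $n$ with $A \subseteq \bigcup_{i \leq n} T_i$; I claim $T_{n+1}$ is a tail of $A$. Any $C \subseteq \A - A$ in $\Mtail$ lies in $\bigcup_{n<i\leq N} T_i$, so it meets only rows of type $m \geq n+1$. Map each such row, preserving coordinates, to a distinct row of the same type inside $A_{n+1,m} \subseteq T_{n+1}$; there are $\omega$ many such rows, and only countably many need to be used. This map is fixed by every uniform row-preserving permutation, hence lies in $\Mtail$, and it is injective.

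To refute Dup$(A)$ for $A \supseteq \bigcup_{i\leq n}T_i$, take $B = A_{n+1,n+1} \subseteq \A - A$. The type-$(n+1)$ rows in $\Mtail$ are exactly those of $A_{i,n+1}$ for $i \leq n+1$. All of these lie in $A \cup B$. A duplicate $C \subseteq \A - A$ of $B$ that is also disjoint from $B$ would therefore contain no type-$(n+1)$ urelements. By the structural lemma, however, a bijection $B \to C$ would send all but a locally small part of $B$ into type-$(n+1)$ rows, which is a contradiction. Here I am reading the duplicates relevant to Dup$^+$ as pairwise disjoint copies of $B$ outside $A$, which is how the infinite family is used.

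By Lemma \ref{lemma:dup(A)basics}(1) it is enough to refute Dup$(A)$ for arbitrarily large $A$. Every $A$ is contained in some $\bigcup_{i\leq n} T_i$, so Dup fails outright. Therefore Dup$^+$ fails, and Collection fails by Lemma \ref{Lemma:Tail+Col->Dup+}.

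The main obstacle is the structural lemma: showing that an injection with locally small support preserves type and coordinate outside a locally small set. The delicate point is uniformity. A permutation in $\fix_\G(E)$ must fix coordinates that appear in $E$ in \emph{every} row of that type, not only in the rows where $E$ lives. So I would first show that the set of coordinates of type $m$ occurring in $E$ has size $<\omega_m$; this holds because $E$ is locally small in each of the finitely many relevant matrices. The permutations of the remaining coordinates then give enough freedom to run the swapping argument.
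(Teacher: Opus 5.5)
Your proof is correct in outline, but it takes a different route from the paper. The paper refutes Collection directly with a Hartogs-number formula. Let $\varphi(n,A)$ say that every non-well-orderable $B\subseteq\A-A$ has $\aleph(B)>\omega_n$. Then $\bigcup_{i\le n}T_i$ witnesses $\varphi(n,\cdot)$, because a $B$ outside it that meets each $A_{k,m}$ in at most $\omega_n$ points is locally small, hence well-orderable. On the other hand, for any $v$ with $\ker(v)\subseteq\bigcup_{i\le n}T_i$, the row $A_{n+1,n+1,0}$ defeats every $A\in v$ at stage $n+1$.

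You instead derive the failure from Lemma \ref{Lemma:Tail+Col->Dup+} together with Tail and $\neg$Dup in $\Mtail$. Both arguments rest on the same fact: the type-$(n+1)$ urelements of $\Mtail$ are exhausted by $\bigcup_{i\le n+1}T_i$. Your route is heavier. It needs Tail in $\Mtail$ (which the paper proves only later, via Lemma \ref{lemma:simulation}), the monotonicity in Lemma \ref{lemma:dup(A)basics}(1), and an analysis of symmetric injections. The paper needs only ``locally small implies well-orderable''. In exchange, you actually prove the paper's unproved remark that Dup fails in $\Mtail$, and the failure of Collection becomes an instance of the general implication Tail $\land$ Collection $\to$ Dup$^+$.

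Two local points need repair.

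First, your justification of $|E_m|<\omega_m$ is wrong. A support is a locally small subset of the urelements of $U$, and there type-$m$ rows occur in $A_{i,m}$ for \emph{every} $i<\omega$, not just for $i\le m$. For $m\ge 1$ the bound survives: $E_m$ is a countable union of sets of size $<\omega_m$, and $\omega_m$ is regular and uncountable in $U$. For $m=0$ it fails. The set $E=\{a_{i,0,0,i}\mid i<\omega\}$ is locally small, and $\fix_\G(E)$ fixes every type-$0$ urelement. So the shift $a_{0,0,0,j}\mapsto a_{0,0,1,j+1}$ lies in $\Mtail$, and your structural lemma is false for type $0$.

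State the lemma only for $m\ge 1$; that is all you use, since $B=A_{n+1,n+1}$ has type $n+1\ge 1$. Suppose $a\in B$ has coordinate $\alpha\notin E_{n+1}$, and take a transposition of $\alpha$ with some $\beta\notin E_{n+1}$. It lies in $\fix_\G(E)$, fixes every urelement of $C$, and moves $a$, which contradicts injectivity.

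Second, $T_{n+1}$ is a tail of $\bigcup_{i\le n}T_i$ but not of an arbitrary $A\subseteq\bigcup_{i\le n}T_i$. A set $C\subseteq\A-A$ may meet $\bigcup_{i\le n}T_i-A$ in rows of types $1,\dots,n$, and these types do not occur in $T_{n+1}$. For instance, $T_2$ is not a tail of $\emptyset$, since $A_{1,1}$ does not inject into it. Use $(\bigcup_{i\le n}T_i-A)\cup T_{n+1}$ instead. Similarly, when refuting Dup, take $A=\bigcup_{i\le n}T_i$ exactly, so that $B$ is disjoint from $A$.
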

\begin{proof}
Let $\varphi(n, A)$ abbreviate $$A \subseteq \A \land \forall B \subseteq \A - A ( B \text{ is non-well-orderable} \to \aleph(B) > \omega_n).$$ We show that
$$\Mtail \models \forall n < \omega \exists A \ \varphi(n, A) \land  \neg\exists v \forall n < \omega \exists A \in v \ \varphi (n, A),$$
which fails Collection.
For each $n < \omega$, let $A$ be $\bigcup_{i\leq n} T_i$. Suppose $B$ is a non-well-orderable set of urelements in $\Mtail$ disjoint from $A$. Then there are some $m \geq k > n$ such that $|B \cap A_{k, m}| > \omega_n$: otherwise $B$ would be locally small and hence well-orderable in $\Mtail$. So it follows that $\Mtail \models \aleph(B) > \omega_n$. On the other hand, for every $v \in \Mtail$, let $n < \omega$ be such that $\ker(v) \subseteq \bigcup_{i \leq n}T_n$. Set $B = A_{n+1, n+1, 0}$, which witnesses $\Mtail \models \neg \exists A \in v \ \varphi (n+1, A)$.
\end{proof}

We next show that RP$^\sim$ holds in $\Mtail$ by using a supertransitive class to simulate $\Mtail$ inside any given $V(\bigcup_{i\leq n}T_i)$.

\begin{definition}
For every $m, n, n', k, k' < \omega$, define $\sigma^m_{n, n', k, k'}$ to be the bijective map $a_{n, m, k, \alpha} \mapsto a_{n', m, k', \alpha}$ from  $ A_{n, m, k}$ to $ A_{n', m, k'}$. A permutation $\sigma$ of $\A$ is \textit{row-swapping} if for every $n, m, k < \omega$, $\sigma \restriction A_{n, m, k}$ is $\sigma^m_{n, n', k, k'}$ for some $n', k' < \omega$.
\end{definition}

\begin{lemma}\label{lemma:rowswapping}
Let $\sigma$ be a row-swapping permutation that pointwise fixes $\bigcup_{k<\omega} A_{k,0}$. Then for every $x$, $x$ is $\F$-symmetric if and only if $\sigma x$ is $\F$-symmetric.
\end{lemma}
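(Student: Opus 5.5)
The plan is to show that conjugation by $\sigma$ is an automorphism of $\G$ that carries $\F$ onto itself. Then the identity $\sym_\G(\sigma x) = \sigma\,\sym_\G(x)\,\sigma^{-1}$ gives the equivalence at once. Since $\sigma^{-1}$ is also a row-swapping permutation that pointwise fixes $\bigcup_{k<\omega}A_{k,0}$, it suffices to prove one direction: if $x$ is $\F$-symmetric, then so is $\sigma x$.

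\emph{Step 1: $\sigma\G\sigma^{-1} = \G$.}
\begin{itemize}
\item First reformulate the group. A permutation $\pi$ lies in $\G$ exactly when there are permutations $f_m$ of $\omega_m$ such that
\[\pi a_{n,m,k,\alpha} = a_{n,m,k,f_m(\alpha)}\quad\text{for all } n,k.\]
\item Now take $\pi \in \G$ and compute $\sigma\pi\sigma^{-1}$ on $a_{n',m,k',\alpha}$, where $\sigma$ maps $A_{n,m,k}$ onto $A_{n',m,k'}$ via $\sigma^m_{n,n',k,k'}$.
\item $\sigma^{-1}$ sends $a_{n',m,k',\alpha}$ to $a_{n,m,k,\alpha}$. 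Then $\pi$ sends it to $a_{n,m,k,f_m(\alpha)}$. Finally $\sigma$ sends it to $a_{n',m,k',f_m(\alpha)}$.
\item So $\sigma\pi\sigma^{-1}$ is row-preserving and uniform, with the same $f_m$'s as $\pi$; hence $\sigma\pi\sigma^{-1}\in\G$.
\item The same computation applied to $\sigma^{-1}$ gives the reverse inclusion.
\end{itemize}

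\emph{Step 2: $\sigma$ and $\sigma^{-1}$ preserve local smallness.} This is the main obstacle, and the only place the hypothesis about $\bigcup_k A_{k,0}$ is used. Let $E$ be locally small and fix $n',m$.
\begin{itemize}
\item The set $\sigma E \cap A_{n',m}$ is the union over $k'<\omega$ of the sets $\sigma(E\cap A_{n,m,k})$, where $A_{n,m,k}$ is the row sent to $A_{n',m,k'}$.
\item Each piece has size at most $|E\cap A_{n,m}| < \omega_m$.
\item For $m \geq 1$, $\omega_m$ is regular and uncountable. A countable union of sets of size $<\omega_m$ therefore has size $<\omega_m$.
\item For $m=0$ this argument fails, since countably many finite pieces can form an infinite set. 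Here $\sigma$ is the identity on $A_{n',0}$, so $\sigma E\cap A_{n',0} = E\cap A_{n',0}$, which is finite.
\end{itemize}

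\emph{Step 3: conclude.}
\begin{itemize}
\item For any $E$, $\fix_\G(\sigma E) = \sigma\,\fix_\G(E)\,\sigma^{-1}$ by Step 1.
\item Suppose $\fix_\G(E)\subseteq \sym_\G(x)$ with $E$ locally small. Then
\[\fix_\G(\sigma E) = \sigma\,\fix_\G(E)\,\sigma^{-1}\subseteq \sigma\,\sym_\G(x)\,\sigma^{-1} = \sym_\G(\sigma x).\]
\item The set $\sigma E$ is locally small by Step 2, so $\sym_\G(\sigma x)\in\F$.
\item The converse follows by applying the same argument to $\sigma^{-1}$.
\end{itemize}
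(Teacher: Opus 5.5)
Your proof is correct and follows essentially the same route as the paper: you push a locally small support $E$ of $x$ forward to $\sigma E$, using countable unions of pieces of size $<\omega_m$ for $m>0$ and the pointwise fixing of $\bigcup_{k<\omega}A_{k,0}$ for $m=0$, then use how $\sigma$ interacts with $\G$ to get $\fix_\G(\sigma E)\subseteq\sym_\G(\sigma x)$, and you get the converse via $\sigma^{-1}$. Your Step 1 computation actually shows $\sigma\pi\sigma^{-1}=\pi$ for every $\pi\in\G$, which is the paper's formulation and gives $\fix_\G(\sigma E)=\fix_\G(E)$ outright; you also make explicit the regularity of $\omega_m$ and the $m=0$ case, which the paper leaves implicit.
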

\begin{proof}
Suppose that $x$ is $\F$-symmetric and let $E$ be a locally small set that supports $x$. Then for each $A_{i, j}$, where $j > 0$, we have 
$$\sigma E \cap A_{i, j} = \sigma (E \cap \sigma^{-1} A_{i, j}) = \sigma (\bigcup_{ k < \omega} E \cap \sigma^{-1} A_{i, j, k}).$$
Since each $\sigma^{-1} A_{i, j, k}$ is some $A_{i', j, k'}$, $|E \cap \sigma^{-1} A_{i, j, k}| < \omega_j$. Thus, $|\sigma E \cap A_{i, j}| < \omega_j$, making $\sigma E$ locally small.

It remains to show $\fix_\G(\sigma E) \subseteq \sym_\G(\sigma x)$. Let $\pi \in \fix_G(\sigma E)$. The action of $\pi$ on every row $A_{i, j, k}$ follows the same permutation $\pi_j$ of $\omega_j$, so for any urelement $a_{i, j, k, \alpha}$, $\sigma \pi \sigma^{-1} (a_{i, j, k, \alpha}) = a_{i, j, k, \pi_j(\alpha)} = \pi a_{i, j, k, \alpha}$. This shows that $\pi = \sigma^{-1} \pi \sigma = \sigma \pi \sigma^{-1}$. It follows that $\pi \in \fix_\G(E)$ and hence $\pi \in \sym_\G(x)$. Thus, $\pi \sigma x = \sigma \pi \sigma^{-1} \sigma x = \sigma x.$ Therefore, $\fix_\G(\sigma E) \subseteq \sym_\G(\sigma x)$. The inverse of any row-swapping permutation is also row-swapping, so the lemma is proved.\end{proof}

\begin{lemma}\label{lemma:simulation}
For every $0 < n < \omega$, there is a definable supertransitive class $M^*$ of $V(\bigcup_{i \leq n} T_i)^{\Mtail}$ and an isomorphism $\sigma$ from $\Mtail$ to $M^*$ such that $\sigma$ pointwise fixes $\bigcup_{i < n} T_i$.
\end{lemma}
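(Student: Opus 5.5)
Fix $0<n<\omega$ and put $C=\bigcup_{i\le n}T_i$. The plan is to find, in $U$, a row-swapping permutation $\sigma$ of $\A$ with three properties: it pointwise fixes $\bigcup_{i<n}T_i$, it pointwise fixes $\bigcup_{k<\omega}A_{k,0}$, and it maps $\bigcup_{i<\omega}T_i$ into $C$. We then take $M^*=\sigma[\Mtail]$ and show that it is a definable supertransitive class of $V(C)^{\Mtail}$. Since $\sigma$ canonically extends to an automorphism of $U$, its restriction to $\Mtail$ is automatically an isomorphism onto its image. So the work splits into two parts: constructing $\sigma$, and identifying $\sigma[\Mtail]$ internally.

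\emph{Construction of $\sigma$.} Fix a width $m$ and let $R_m$ be the set of rows $A_{i,m,k}$. Split $R_m$ into four parts:
\begin{itemize}
\item $F_m$, the rows with $i<n$;
\item $N_m$, the rows with $i=n\le m$ (these are the rows of $T_n$ of width $\omega_m$);
\item $H_m$, the rows with $n<i\le m$;
\item $O_m$, the rows with $i>m$, which lie in no $T_i$.
\end{itemize}
On $F_m\cup O_m$, let $\sigma$ act as the identity. If $m\ge n$, then $N_m\cup H_m$ consists of $(m-n+1)\cdot\omega$ rows and $N_m$ of $\omega$ rows. Fix a bijection between these index sets and let $\sigma$ send each row of $N_m\cup H_m$ to the corresponding row of $N_m$ via the maps $\sigma^m_{i,n,k,k'}$. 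If $m<n$, then $N_m=H_m=\emptyset$. The result is a row-swapping permutation that fixes every row of $T_i$ for $i<n$. Since $n>0$, all width-$0$ rows lie in $F_0\cup O_0$, so $\sigma$ also pointwise fixes $\bigcup_k A_{k,0}$. Finally, every row of $T_i$ with $i\ge n$ is sent into $T_n$, so $\sigma[\bigcup_i T_i]\subseteq C$.

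\emph{Identifying $\sigma[\Mtail]$.} By Lemma \ref{lemma:rowswapping} and $\in$-induction, $x$ is hereditarily $\F$-symmetric if and only if $\sigma x$ is. Also $\ker(\sigma x)=\sigma\ker(x)$. Put $C_k=\sigma[\bigcup_{i\le k}T_i]$. Then
\[
\sigma[\Mtail]=\{y \mid y\text{ is hereditarily }\F\text{-symmetric} \land \exists k\,(\ker(y)\subseteq C_k)\}.
\]
Each $C_k\subseteq C$, and for $k\ge n$ it is a union of rows lying in $\bigcup_{i\le n}T_i$. Hence each $C_k$, and also the sequence $\langle C_k\mid k<\omega\rangle$, is fixed by every $\pi\in\G$ (elements of $\G$ are row-preserving) and has kernel inside $C$. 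So the sequence belongs to $V(C)^{\Mtail}$. Since $\ker(y)\subseteq C_k\subseteq C$ already puts $\ker(y)$ in $\I$, the displayed class equals
\[
M^*=\{y\in V(C)^{\Mtail}\mid \exists k\,(\ker(y)\subseteq C_k)\},
\]
which is definable in $V(C)^{\Mtail}$ from that parameter.

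\emph{Supertransitivity.} Transitivity of $M^*$ holds because kernels shrink when passing to elements. For the second condition, suppose $y\in M^*$ and $z\subseteq y$ with $z\in V(C)^{\Mtail}$. Then $z$ is hereditarily symmetric and $\ker(z)\subseteq\ker(y)\subseteq C_k$, so $z\in M^*$.

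I expect the main obstacle to be the combinatorial bookkeeping in the construction of $\sigma$. It must be a genuine bijection of $\A$ that is row-swapping, so that Lemma \ref{lemma:rowswapping} applies, while simultaneously fixing $\bigcup_{i<n}T_i$ and the width-$0$ rows and collapsing all higher $T_i$ into $T_n$. The key points that make this work are, first, that $T_n$ contains $\omega$ rows of each width $\omega_m$ with $m\ge n$, which absorb the finitely many $\omega$-blocks $H_m$ of the same width. Second, the rows $O_m$ outside every $T_i$ can simply be left fixed.
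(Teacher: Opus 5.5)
\textbf{The construction of $\sigma$ does not give a permutation.} Your overall strategy is the same as the paper's. You build a row-swapping map that fixes $\bigcup_{i<n}T_i$ and the width-$0$ rows and folds every $T_i$ with $i\ge n$ into $T_n$. You then cut $M^*$ out of $V(\bigcup_{i\le n}T_i)^{\Mtail}$ by bounding kernels by the images of $\bigcup_{i\le k}T_i$. The problem is that the map you actually construct is not a permutation of $\A$, although you assert that it is.

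For a width $m>n$, you send $N_m\cup H_m$ onto $N_m$ and fix $F_m\cup O_m$. So the range at width $m$ is $F_m\cup N_m\cup O_m$, and the rows of $H_m$ (e.g.\ the rows of $A_{n+1,m}$) are never hit. Hence $\sigma$ is a proper injection, and two of your steps fail as written:
\begin{itemize}
\item its canonical extension is not an automorphism of $U$;
\item Lemma \ref{lemma:rowswapping} does not apply as stated, since it is proved for row-swapping permutations and its converse direction uses that $\sigma^{-1}$ is again row-swapping.
\end{itemize}
Your sentence ``the rows $O_m$ \dots\ can simply be left fixed'' is exactly where the bookkeeping you flagged as the main obstacle breaks.

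\textbf{Two easy repairs.} The first is to use the $\omega$ many rows of $O_m$ to fill the hole. Choose a bijection from the rows of $O_m$ onto the rows of $H_m\cup O_m$; the paper does this by sending $A_{n+j,n+i}$ to $A_{n+(j-i),n+i}$ for $j>i$. These rows lie in no $T_i$ and have width $m\geq n>0$, so none of your other properties is affected.

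The second is to keep the injection and replace the appeal to Lemma \ref{lemma:rowswapping} by a direct commutation argument:
\begin{itemize}
\item Any map sending each row onto a row of the same width via $a_{i,m,k,\alpha}\mapsto a_{i',m,k',\alpha}$ commutes with every uniform row-preserving $\pi$: both $\sigma\pi$ and $\pi\sigma$ send $a_{i,m,k,\alpha}$ to $a_{i',m,k',\pi_m(\alpha)}$. Hence $\sigma\pi=\pi\sigma$ on all of $U$.
\item The canonical extension of $\sigma$ is injective, so $\sym_\G(\sigma x)=\sym_\G(x)$ for every $x$. This gives preservation of hereditary symmetry in both directions.
\item The canonical extension of an injection of $\A$ is an $\in$-isomorphism of $U$ onto $V(\sigma[\A])$, and $V(\sigma[\A])$ contains every $y$ with $\ker(y)\subseteq C_k$.
\end{itemize}
Together these give exactly what your identification of $\sigma[\Mtail]$ needs. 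With either fix, the rest of your argument (the parameter $\langle C_k\mid k<\omega\rangle$, definability, supertransitivity) is correct and matches the paper's construction of $M^*$ from the sets $T^*_k$.
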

\begin{proof}
Fix $0< n < \omega$ and we work in $V(\bigcup_{i \leq n} T_i)^{\Mtail}$. For each $i < \omega$, let $P_i = \{p_j \mid j \leq i\}$ be an $(i+1)$-partition of $\omega$ such that each $p_j = \{k_j \mid k < \omega\}$ is infinite. For each $A_{n, n+i}$, since $\<A_{n, n+i, k} \mid k < \omega >$ is in $\Mtail$, $P_i$ induces an $(i+1)$-partition $\{A^j_{n,n+i} \mid j \leq i \}$ of $A_{n, n+i}$, where $A^j_{n,n+i} = \{A_{n, n+i, k_j } \mid k_j \in p_j\}$. In $V(\bigcup_{i \leq n} T_i)^{\Mtail}$, for each $k < \omega$, define

\begin{equation*}
T^*_k = 
\begin{cases*}
T_k   & $k < n$ \\
 \bigcup_{i \geq j}A^{j}_{n, n+i} & $k = n + j$
\end{cases*}
\end{equation*}
Define
$$M^* = \{ x \in V(\bigcup_{i \leq n} T_i)^{\Mtail} \mid  \exists j < \omega (\ker(x) \subseteq \bigcup_{k \leq j} T^*_k)\}.$$
\noindent $M^*$ is a supertransitive class in $\Mtail$.

Now we define the isomorphism $\sigma$ in $U$. Fix $i < \omega$.
\begin{itemize}
\item [] (i) If $j \leq i$, let $\sigma_{i, j} = \bigcup_{k <\omega} \sigma^{n+i}_{n+j, n, k, k_j}$, which injects $A_{n+j, n+i}$ into $A^j_{n, n+i}$;

\item [] (ii) If $j > i$, let $\sigma_{i, j} = \bigcup_{k <\omega} \sigma^{n+i}_{n+j, n+(j -i), k, k}$.
\end{itemize}

\noindent Let $\sigma^*$ be the canonical extension of $\bigcup_{i, j<\omega} \sigma_{i,j}$ that pointwise fixes every other urelement. Then $\sigma^*$ is a row-swapping permutation of $\A$ that pointwise fixes both $\bigcup_{k<\omega} A_{k,0} $ and $ \bigcup_{i < n} T_i$. Let $\sigma = \sigma^* \restriction \Mtail$. We have $\sigma T_k = T^*_k$ for every $k < \omega$.

For every $x \in \Mtail$, since $\ker(x) \subseteq \bigcup_{k \leq j}T_k$ for some $j < \omega$, $\ker(\sigma x) \subseteq \bigcup_{k \leq j} \sigma T_k = \bigcup_{k \leq j} T^*_k$. Then it follows from Lemma \ref{lemma:rowswapping} that $\sigma x \in \Mtail$ and hence $\sigma x \in M^*$. On the other hand, if $x \in M^*$, $\ker(x) \subseteq \bigcup_{k \leq j} T^*_k$ for some $j$ so $\ker(\sigma^{-1} x) \subseteq \bigcup_{k \leq j} T_k$; and since $\sigma^{-1}$ is row-swapping and $M^* \subseteq \Mtail$,  $\sigma^{-1} x \in \Mtail$ by Lemma \ref{lemma:rowswapping} again. Therefore, $\sigma$ is an isomorphism from $\Mtail$ to $M^*$. \end{proof}

\begin{lemma}
$\Mtail \models$ RP$^\sim$.
\end{lemma}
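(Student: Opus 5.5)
We mirror the proof of Lemma \ref{lemma:partialreflection}, replacing transitivity by supertransitivity. Suppose $\Mtail \models \varphi(x)$. First choose $n>0$ with $\ker(x) \subseteq \bigcup_{i<n} T_i$; such an $n$ exists because $\ker(x) \in \I$, and we may take $n$ one larger than the index witnessing this. Lemma \ref{lemma:simulation} then gives a definable supertransitive class $M^*$ of $V(\bigcup_{i\leq n}T_i)^{\Mtail}$ and an isomorphism $\sigma:\Mtail \to M^*$ fixing $\bigcup_{i<n}T_i$ pointwise. Since $\sigma$ fixes $\ker(x)$ pointwise, it fixes $x$, so $M^* \models \varphi(x)$. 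Unlike in the $\Mrp$ argument, no preliminary permutation $\pi$ is needed, because $\sigma$ already fixes $x$.

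Next we work inside $V(A)^{\Mtail}$ with $A = \bigcup_{i\leq n}T_i \in \Mtail$. By Corollary \ref{cor:V(A)modelsRP}, $V(A)^{\Mtail}$ satisfies RP. We apply RP in $V(A)$ to the relativized formula $\varphi^{M^*}(v)$, with any parameters used to define $M^*$ included (these are in $V(A)$), together with the formula ``$v_1 = P(v_2)$'' and finitely many ZFU axioms relativized to $M^*$, combined as in the proof of Proposition \ref{prop:RP->powerRP-andCollection}. This yields a transitive set $t^* \in V(A)$ containing $x$ and the parameters, with $t^*$ closed under powersets computed in $V(A)$ and with $\varphi^{M^*\cap t^*}(x)$. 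Here $M^* \cap t^*$ is definable in $t^*$ because $M^*$ is definable. Put $t = M^* \cap t^*$. Then $t$ is transitive, since both $M^*$ and $t^*$ are transitive, and $t \models \varphi(x)$.

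It remains to check that $t$ is supertransitive in $\Mtail$. This is the main obstacle. Let $y \in t$ and $z \subseteq y$ with $z \in \Mtail$. We have $\ker(z) \subseteq \ker(y) \subseteq \bigcup_{k\leq j}T^*_k$ for some $j$, so $z \in V(A)$ and $z \in M^*$; this is exactly where the supertransitivity of $M^*$ in $\Mtail$ is used. Because $t^*$ reflects $v_1 = P(v_2)$, it contains $P^{V(A)}(y)$ whenever $y \in t^*$, hence $z \in t^*$. Powersets in $V(A)^{\Mtail}$ agree with those in $\Mtail$ for sets in $V(A)$, since $V(A)$ is closed under subsets in $\Mtail$. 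Therefore $z \in t$, so $t$ is supertransitive, and since $t$ contains $x$ and satisfies $\varphi(x)$, RP$^\sim$ holds in $\Mtail$.
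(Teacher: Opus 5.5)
Your proposal is correct and follows the paper's proof. You choose $n$ with $\ker(x)\subseteq\bigcup_{i<n}T_i$, transfer $\varphi(x)$ to $M^*$ via the isomorphism of Lemma~\ref{lemma:simulation} (which fixes $x$), reflect $\varphi^{M^*}$ to a supertransitive $t^*$ of $V(\bigcup_{i\leq n}T_i)^{\Mtail}$, and take $t=M^*\cap t^*$; the only difference is that you rederive RP$^\sim$ there from RP as in Proposition~\ref{prop:RP->powerRP-andCollection} and spell out why $M^*\cap t^*$ is supertransitive in $\Mtail$, which the paper leaves implicit (reflecting $v_1=P(v_2)$ alone does not give powerset closure, so you also need to reflect Powerset, as that Proposition does).
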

\begin{proof}
Suppose that $\Mtail \models \varphi(x)$ for some $x \in \Mtail$. Let $n > 0$ be such that $\ker(x) \subseteq \bigcup_{i < n}T_i$. By Lemma \ref{lemma:simulation}, there is a definable supertransitive class $M^*$ in $V(\bigcup_{i \leq n} T_i)^{\Mtail}$ and an isomorphism $\sigma$ from $\Mtail$ to $M^*$ such that $\sigma$ pointwise fixes $\bigcup_{i < n}T_i$. Thus, $M^* \models \varphi(x)$. By RP and hence RP$^\sim$ in $V(\bigcup_{i \leq n} T_i)^{\Mtail}$, there is a supertransitive set $t \in V(\bigcup_{i \leq n} T_i)^{\Mtail}$ such that $x \in t$ and $M^* \cap t \models \varphi (x)$. So $M^* \cap t$ is a desired supertransitive set in $\Mtail$ containing $x$.\end{proof}

\begin{lemma}
$\Mtail \models $ Tail $\land$ SVC.
\end{lemma}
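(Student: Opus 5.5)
We verify Tail directly by exhibiting explicit tails, and then get SVC from Uni via the argument of Theorem \ref{thm:FThmofSKPM}(2).

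\emph{Tail.} Let $A \subseteq \A$ be in $\Mtail$ and fix $n$ with $A \subseteq \bigcup_{i \leq n} T_i$, which exists since $\ker(A) \in \I$. Our candidate tail is
$$T = \Big(\bigcup_{i\leq n} T_i - A\Big) \cup T_{n+1}.$$
Each $T_i$ is fixed by every $\pi \in \G$, because elements of $\G$ are row-preserving. Hence $T \in \Mtail$: its support is that of $A$, and its kernel lies in $\bigcup_{i \leq n+1}T_i$.

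Now let $C \subseteq \A - A$ be in $\Mtail$, and take $N > n$ with $C \subseteq \bigcup_{i \leq N} T_i$. The part of $C$ inside $\bigcup_{i \leq n}T_i$ already lies in $T$, so we map it by the identity. For the rest, it suffices to build an injection $g \in \Mtail$ from $\bigcup_{n < i \leq N} T_i$ into $T_{n+1}$. Fix $m \geq n+1$. The matrices $A_{i,m}$ with $n<i\leq \min(m,N)$ are finitely many, each with $\omega$ rows of length $\omega_m$, and they share a common column index. Using a fixed injection $h:\omega\times\omega \to \omega$ that does not depend on $m$, set
$$g(a_{i,m,k,\alpha}) = a_{n+1,m,h(i,k),\alpha}.$$
Thus $g$ is a union of maps $\sigma^m_{i,n+1,k,h(i,k)}$, and it is injective.

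The key point is that every uniform row-preserving $\pi$ commutes with $g$. Such a $\pi$ acts on every row $A_{i',m,k'}$ by the same permutation $\pi_m$ of $\omega_m$, and $g$ preserves both $m$ and the column index $\alpha$. Therefore $\sym_\G(g)=\G \in \F$. Moreover $\ker(g) \subseteq \bigcup_{i \leq N}T_i \in \I$, and every element of $\tc(\{g\})$ (ordered pairs of urelements) is symmetric. Hence $g \in \Mtail$, and $\mathrm{id} \cup g$ restricted to $C$ injects $C$ into $T$. So $T$ is a tail of $A$, and Tail holds.

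\emph{SVC.} By Theorem \ref{thm:Tail+Collection->RP}(1), Tail implies Max, which implies Uni, so $\Mtail$ has a universal set, for instance $T_0$. We would then invoke the proof of Theorem \ref{thm:FThmofSKPM}(2). That theorem is stated for flexible $\I$, which fails here, so we must check that its proof of SVC does not use flexibility. I expect this verification to be the main obstacle.

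Flexibility was used only for Replacement in part (1), and we already know $\Mtail \models \ZFU$. The argument for (2) uses only three facts:
\begin{enumerate}
\item $\G \subseteq \Aut(\Mtail)$, which holds because $\G$ preserves $\F$-symmetry and fixes each $T_i$, hence preserves $\I$;
\item AC in $U$, to choose the representatives $x_H$ and to enumerate $x$;
\item membership in $\Mtail$ of the sets $S$ and $f$ built there. Both are invariant under $\sym_\G(A)$ for the universal set $A$, so they are symmetric; their kernels lie inside $A \in \I$; and their elements are hereditarily symmetric with kernels in $\I$.
\end{enumerate}
Once these are checked, the proof of Theorem \ref{thm:FThmofSKPM}(2) goes through verbatim and yields $S$ such that every set in $\Mtail$ is a surjective image of some $S \times \alpha$. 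Hence $\Mtail \models$ SVC.
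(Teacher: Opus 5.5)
Your proof is correct and follows essentially the paper's route. You exhibit $(\bigcup_{i\le n}T_i - A)\cup T_{n+1}$ as a tail via a $\G$-invariant union of row maps $\sigma^m_{i,n+1,k,h(i,k)}$, where the paper reuses the $\sigma$ of Lemma \ref{lemma:simulation} for the same purpose, and you then obtain SVC from Tail $\to$ Max $\to$ Uni and Theorem \ref{thm:FThmofSKPM}(2). Your check that the proof of Theorem \ref{thm:FThmofSKPM}(2) never uses $\G$-flexibility of $\I$ is right: it needs only $\G\subseteq\Aut(\Mtail)$, AC in $U$, and the symmetry and kernel bounds for $S$ and $f$, so you make explicit a point the paper leaves implicit.
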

\begin{proof}
In $\Mtail$, for every $ n, m > 0$, $\bigcup_{n \leq i  < n + m} T_{i}$ injects into $T_n$. This is because $\sigma$ defined in Lemma \ref{lemma:simulation} restricted to  $\bigcup_{n \leq i  < n + m} T_{i}$ is such an injection, since $\sym_\G(\sigma^m_{n, n', k, k'}) = \G$ for every $m, n, n', k, k' < \omega$ and hence every collection of them is $\F$-symmetric. It follows that $T_n$ is a tail of $\bigcup_{i < n} T_i$ for every $n > 0$, which implies that $\Mtail \models$ Tail. So it follows from Theorem \ref{thm:Tail+Collection->RP} and \ref{thm:FThmofSKPM} that $\Mtail \models$ SVC.\end{proof}

\section*{Funding}
The second author was supported by NSFC No. 12401001 and the Fundamental Research Funds for the Central Universities, Peking University.

\printbibliography

@incollection{levy1969definability,
  title={The definability of cardinal numbers},
  author={L{\'e}vy, Azriel},
  booktitle={Foundations of Mathematics},
  pages={15--38},
  year={1969},
  publisher={Springer}
}

@incollection{levy1966principles,
  title={On the principles of reflection in axiomatic set theory},
  author={L{\'e}vy, Azriel},
  booktitle={Studies in Logic and the Foundations of Mathematics},
  volume={44},
  pages={87--93},
  year={1966},
  publisher={Elsevier}
}

@article{gauntt1967undefinability,
  title={Undefinability of cardinality},
  author={Gauntt, Robert J},
  journal={Lectures notes prepared in connection with the Summer Institute on Axiomatic Set Theory held at University of California, Los Angeles, IV-M},
  year={1967}
}

@article{blassInjec1979,
 ISSN = {00029947},
 URL = {http://www.jstor.org/stable/1998165},
 author = {Andreas Blass},
 journal = {Transactions of the American Mathematical Society},
 pages = {31--59},
 publisher = {American Mathematical Society},
 title = {Injectivity, Projectivity, and the Axiom of Choice},
 urldate = {2025-08-18},
 volume = {255},
 year = {1979}
}

@article{YaoAxiomandForcing,
	author = {Bokai Yao},
	doi = {10.1017/jsl.2024.58},
	journal = {Journal of Symbolic Logic},
	title = {Axiomatization and Forcing in Set Theory with Urelements},
	year = {forthcoming}
}

@article{FinitenessSVC,
	author = {Horst Herrlich and Paul Howard and Eleftherios Tachtsis},
	doi = {10.1215/00294527-3490101},
	journal = {Notre Dame Journal of Formal Logic},
	number = {3},
	pages = {375--388},
	title = {Finiteness Classes and Small Violations of Choice},
	volume = {57},
	year = {2016}
}

@MISC {GlazerMOFUnboundedCards,
    TITLE = {Is every set of cardinals bounded?},
    AUTHOR = {Elliot Glazer (https://mathoverflow.net/users/109573/elliot-glazer)},
    HOWPUBLISHED = {MathOverflow},
    NOTE = {URL:https://mathoverflow.net/q/501823 (version: 2025-10-20)},
    EPRINT = {https://mathoverflow.net/q/501823},
    URL = {https://mathoverflow.net/q/501823}
}

@misc{Yao2025Plenitude,
      title={Plenitudinous Urelements and the Definability of Cardinality}, 
      author={Bokai Yao},
      year={2025},
      eprint={2508.20641},
      archivePrefix={arXiv},
      primaryClass={math.LO},
      url={https://arxiv.org/abs/2508.20641}, 
}

@article{levy1960axiom,
  title={Axiom schemata of strong infinity in axiomatic set theory},
  author={L{\'e}vy, Azriel and others},
  journal={Pacific journal of mathematics},
  volume={10},
  number={1},
  pages={223--238},
  year={1960},
  publisher={Mathematical Sciences Publishers}
}

@incollection{Montague1961-MONFAT,
	author = {Richard Montague},
	booktitle = {Essays on the Foundations of Mathematics},
	editor = {Bar{-}Hillel and Yehoshua and [From Old Catalog]},
	pages = {662--662},
	publisher = {Magnes Press},
	title = {Fraenkel's Addition to the Axioms of Zermelo},
	year = {1961}
}

\end{document}